\renewcommand{\uppercasenonmath}[1]{}
\numberwithin{equation}{section} \theoremstyle{plain}
\newtheorem*{thm*}{Main Theorem}
\newtheorem{thm}{Theorem}[section]
\newtheorem{cor}[thm]{Corollary}
\newtheorem*{cor*}{Corollary}
\newtheorem{lem}[thm]{Lemma}
\newtheorem*{lem*}{Lemma}
\newtheorem*{fact*}{Fact}
\newtheorem*{nota*}{Notation}
\newtheorem{prop}[thm]{Proposition}
\newtheorem*{prop*}{Proposition}
\newtheorem{rem}[thm]{Remark}
\newtheorem*{rem*}{Remark}
\newtheorem*{observation*}{Observation}
\newtheorem*{exa*}{Example}
\newtheorem*{df*}{Definition}
\newtheorem*{con*}{Construction}
\renewcommand{\geq}{\geqslant}
\renewcommand{\leq}{\leqslant}
\begin{document}
\begin{center}
{\large  \bf Hartshorne's question on cofinite complexes}

\vspace{0.5cm} Xiaoyan Yang and Jingwen Shen\\
Department of Mathematics, Northwest Normal University, Lanzhou 730070,
China
E-mails: yangxy@nwnu.edu.cn and shenjw0609@163.com
\end{center}

\bigskip
\centerline { \bf  Abstract}
\leftskip10truemm \rightskip10truemm \noindent Let $\mathfrak{a}$ be a proper ideal of a commutative noetherian ring $R$ and $d$ a positive integer.
We answer Hartshorne's question on cofinite complexes completely in the cases $\mathrm{dim}R=d$ or $\mathrm{dim}R/\mathfrak{a}=d-1$ or $\mathrm{ara}(\mathfrak{a})=d-1$,
show that if $d\leq2$ then an $R$-complex $X\in\mathrm{D}_\sqsubset(R)$ is $\mathfrak{a}$-cofinite if and only if each homology module $\mathrm{H}_i(X)$ is $\mathfrak{a}$-cofinite; if $\mathfrak{a}$ is a perfect ideal and $R$ is regular local with $d\leq2$ then an $R$-complex $X\in\mathrm{D}(R)$ is $\mathfrak{a}$-cofinite if and only if
$\mathrm{H}_i(X)$ is $\mathfrak{a}$-cofinite for every $i\in\mathbb{Z}$; if $d\geq3$ then for an $R$-complex $X$ of
 $\mathfrak{a}$-cofinite $R$-modules, each $\mathrm{H}_i(X)$ is $\mathfrak{a}$-cofinite if and only if $\mathrm{Ext}^j_R(R/\mathfrak{a},\mathrm{coker}d_i)$ are finitely generated for $j\leq d-2$.
We also study cofiniteness of local cohomology $\mathrm{H}^i_\mathfrak{a}(X)$ for an $R$-complex $X\in\mathrm{D}_\sqsubset(R)$ in the above cases.
The crucial step to achieve these
is to recruit the technique of spectral sequences.\\
\vbox to 0.3cm{}\\
{\it Key Words:}  cofinite complex; cofinite module; local cohomology\\
{\it 2020 Mathematics Subject Classification:} 13D45; 13D09

\leftskip0truemm \rightskip0truemm
\bigskip
\section* { \bf Introduction}

Throughout this paper, $R$ is a commutative noetherian ring with identity. For an ideal $\mathfrak{a}$ of $R$ and an $R$-module $M$, the \emph{$i$th local cohomology} of $M$ with respect to $\mathfrak{a}$ is
\begin{center}$\mathrm{H}^i_\mathfrak{a}(M)=\underrightarrow{\textrm{lim}}_{t>0}\mathrm{Ext}^i_R(R/\mathfrak{a}^t,M)$.\end{center}
The reader can refer to \cite{BS} or \cite{Gr} for more details about local cohomology.

 Grothendieck \cite{G} asked whether $\mathrm{Hom}_R(R/\mathfrak{a},\mathrm{H}^i_\mathfrak{a}(M))$
were finitely generated for all finitely generated $R$-modules $M$ and $i\geq0$, which had
been answered affirmatively in his algebraic geometry seminars of 1961-2 when $(R,\mathfrak{m})$ is local and $\mathfrak{a}=\mathfrak{m}$.
In 1969, Hartshorne \cite{H} provided a counterexample to Grothendieck's question. He then defined an $R$-module $M$ to be \emph{$\mathfrak{a}$-cofinite}
if $\mathrm{Supp}_RM\subseteq\mathrm{V}(\mathfrak{a})$ and $\mathrm{Ext}^i_R(R/\mathfrak{a},M)$ is finitely generated for $i\geq 0$. When $R$ is an $\mathfrak{a}$-adically complete regular ring of finite Krull dimension, he
defined an $R$-complex $X$ to be \emph{$\mathfrak{a}$-cofinite} if $X\simeq\mathrm{RHom}_R(Y,\mathrm{R}\Gamma_\mathfrak{a}(R))$
for some $R$-complex $Y$ with
finitely generated homology. Hartshorne also proceeded to pose some questions in this respect.

\vspace{2mm} \noindent{\bf Question 1.}\label{Th1.4} {\it{Are the local cohomology modules $\mathrm{H}^i_\mathfrak{a}(M)$ $\mathfrak{a}$-cofinite for every finitely generated
$R$-module $M$ and every $i\geq 0$?}}

\vspace{2mm} \noindent{\bf Question 2.}\label{Th1.4} {\it{Is the category $\mathcal{M}(R,\mathfrak{a})_{cof}$ of $\mathfrak{a}$-cofinite $R$-modules is
an abelian subcategory of the category of $R$-modules?}}

\vspace{2mm} \noindent{\bf Question 3.}\label{Th1.4} {\it{Is it true that an $R$-complex $X$ is $\mathfrak{a}$-cofinite if and only if
the homology module $\mathrm{H}_i(X)$ is a-cofinite for every $i\in \mathbb{Z}$?}}
\vspace{2mm}

By providing a counterexample in \cite{H}, Hartshorne showed that the answers to these questions are
negative in general, and further established affirmative answers to Questions 1 and 2, and Question 3 for bounded above complexes when $\mathfrak{a}$ is a
principal ideal generated by a nonzerodivisor and $R$ is an $\mathfrak{a}$-adically complete regular ring with $\mathrm{dim}R<\infty$, and also when $\mathfrak{a}$ is a prime ideal of a complete
regular local ring $R$ with $\mathrm{dim}R/\mathfrak{a}=1$ (see \cite[Proposition 6.2, Corollary 6.3, Theorem 7.5, Proposition 7.6 and Corollary 7.7]{H}).

If $\mathrm{H}^i_\mathfrak{a}(M)$ is $\mathfrak{a}$-cofinite, then the set $\mathrm{Ass}_R\mathrm{H}^i_\mathfrak{a}(M)$
of associated primes
and the Bass numbers $\mu^j_{R}(\mathfrak{p},\mathrm{H}^i_\mathfrak{a}(M))$
are finite for all $\mathfrak{p}\in\mathrm{Spec}R$ and $i,j\geq0$. This observation reveals the utmost significance of Question 1 as an affirmative answer to this question in any case would readily set fourth affirmative answers
to Huneke's questions (see \cite{Hu}).
In the following years, Hartshorne's results on Questions 1 and 2 were systematically
extended and polished by commutative algebra practitioners in several stages.
Whereas not much attention
has been brought to Question 3. The most striking result on Question 3 is the following.

\vspace{2mm} \noindent{\bf Theorem I.}\label{Th1.4} {\rm (\cite[Theorem 1]{EK}, 2011).} {\it{Let $R$ be a complete Gorenstein local domain and $\mathfrak{a}$ a proper ideal of $R$ with $\mathrm{dim}R/\mathfrak{a}=1$.
Then an $R$-complex $X\in \mathrm{D}_\sqsubset(R)$ is $\mathfrak{a}$-cofinite if and only if $\mathrm{H}_i(X)$ is $\mathfrak{a}$-cofinite for every $i\in \mathbb{Z}$.}}
\vspace{2mm}

These questions are inextricably linked in an elegant way whose interrelations are yet to be
unraveled. An evidence that motivates and supports this speculation relies on the fact
that these questions stand true all together in one case and fail to hold in another case. As to shed some light on this revelation, Faridian \cite[Section 2.4]{F} defined an $R$-complex $X$ to
be \emph{$\mathfrak{a}$-cofinite} if $\mathrm{Supp}_RX\subseteq\mathrm{V}(\mathfrak{a})$ and $\mathrm{RHom}_R(R/\mathfrak{a},X)\in \mathrm{D}^\mathrm{f}(R)$, and proved that this definition coincides with that of Hartshorne.
He unearthed a connection between Hartshorne's questions, showed that if Question 3 holds true for bounded complexes then Questions 1 and 2 also hold true, and further proved the next result on Question 3.

\vspace{2mm} \noindent{\bf Theorem II.}\label{Th1.4} {\rm (\cite[Corollary 2.4.11]{F}, 2020).} {\it{Let $\mathfrak{a}$ be a proper ideal of an $\mathfrak{a}$-adically complete $R$. Suppose that either $\mathrm{cd}(\mathfrak{a},R)\leq1$, or $\mathrm{dim}R\leq2$, or $\mathrm{dim}R/\mathfrak{a}\leq 1$.
Then an $R$-complex $X\in \mathrm{D}_\square(R)$ is $\mathfrak{a}$-cofinite if and only if $\mathrm{H}_i(X)$ is $\mathfrak{a}$-cofinite for every $i\in \mathbb{Z}$.}}
\vspace{2mm}

The first primary objective of this paper is to answer Question 3 affirmatively in the cases  $\mathrm{dim}R=d\geq2$, or $\mathrm{dim}R/\mathfrak{a}=d-1$, or $\mathrm{ara}(\mathfrak{a})=d-1$ by using a spectral sequence argument. More precisely, we show that

\vspace{2mm} \noindent{\bf Theorem III.}\label{Th1.4} {\it{Let $d$ be a positive integer and $\mathfrak{a}$ a proper ideal of $R$ such that either $\mathrm{dim}R=d$, or $\mathrm{dim}R/\mathfrak{a}=d-1$, or $\mathrm{ara}(\mathfrak{a})=d-1$.

$(1)$ If $d\leq2$, then an $R$-complex $X\in \mathrm{D}_\sqsubset(R)$ is $\mathfrak{a}$-cofinite if and only if $\mathrm{H}_i(X)$ is $\mathfrak{a}$-cofinite for every $i\in \mathbb{Z}$ (see Theorem \ref{lem:1.1}).

$(2)$ If $\mathfrak{a}$ is a perfect ideal and $R$ is regular local with $d\leq2$, then an $R$-complex $X\in\mathrm{D}(R)$ is $\mathfrak{a}$-cofinite if and only if
$\mathrm{H}_i(X)$ is $\mathfrak{a}$-cofinite for every $i\in\mathbb{Z}$ (see Theorem \ref{lem:1.4}).

$(3)$ If $d\geq3$, then for a complex $X$ of
$\mathfrak{a}$-cofinite $R$-modules, each $\mathrm{H}_i(X)$ is $\mathfrak{a}$-cofinite if and only if $\mathrm{Ext}^j_R(R/\mathfrak{a},\mathrm{coker}d_i)$ is finitely generated for $j\leq d-2$ (see Theorem \ref{lem:3.43}).}}
\vspace{2mm}

We also investigate cofiniteness of local cohomology $\mathrm{H}^{i}_\mathfrak{a}(X)$ of an $R$-complex $X\in \mathrm{D}_\sqsubset(R)$ in the cases $\mathrm{dim}R=d$ or $\mathrm{dim}R/\mathfrak{a}=d-1$ or $\mathrm{ara}(\mathfrak{a})=d-1$, and find some sufficient conditions
for validity of the isomorphism $\mathrm{Ext}^{s+t}_R(R/\mathfrak{a},X)\cong\mathrm{Ext}^{s}_R(R/\mathfrak{a},\mathrm{H}^{t}_\mathfrak{a}(X))$ for any $X\in \mathrm{D}(R)$ and integers $s,t$ (see Theorem \ref{lem:3.41}).

\bigskip
\section{\bf Preliminaries}
This section collects some notions and facts of complexes
for use throughout this paper.

\vspace{2mm}
{\bf Derived category.} By an \emph{$R$-complex} $X$ we mean a sequence of $R$-modules\begin{center}$\cdots\longrightarrow
X_{n+1}\stackrel{d_{n+1}}\longrightarrow
X_n\stackrel{d_{n}}\longrightarrow
X_{n-1}\stackrel{d_{n-1}}\longrightarrow\cdots$.
\end{center}The theory of derived category is the ultimate formulation of homological algebra. The derived
category $\mathrm{D}(R)$ is defined as the localization of the homotopy category $\mathrm{K}(R)$ with respect to the
multiplicative system of quasi-isomorphisms.
 An $R$-complexes $X$ is called \emph{bounded
above} if $\mathrm{H}_n(X)=0$ for $n\gg0$, \emph{bounded below} if $\mathrm{H}_n(X)=0$ for $n\ll0$, and
\emph{bounded} if it both bounded above and bounded below. The full triangulated subcategories of $\mathrm{D}(R)$ consisting of bounded above, bounded below and bounded $R$-complexes are denoted by
$\mathrm{D}_\sqsubset(R),\mathrm{D}_\sqsupset(R)$ and $\mathrm{D}_\square(R)$. We also denote by $\mathrm{D}^\mathrm{f}(R)$ the full subcategory of $\mathrm{D}(R)$ consisting of $R$-complexes $X$ such that $\mathrm{H}_i(X)$ are
finitely generated for all $i$.
For $X\in\mathrm{D}(R)$, set
\begin{center}$\mathrm{inf}X:=\mathrm{inf}\{n\in\mathbb{Z}\hspace{0.03cm}|\hspace{0.03cm}\mathrm{H}_n(X)\neq0\},\quad \mathrm{sup}X:=\mathrm{sup}\{n\in\mathbb{Z}\hspace{0.03cm}|\hspace{0.03cm}\mathrm{H}_n(X)\neq0\}$.\end{center}

Let $X$ and $Y$ be two $R$-complexes. For every $i\in\mathbb{Z}$, let \begin{center}$\mathrm{Ext}^i_R(X,Y):=
\mathrm{H}_{-i}(\mathrm{RHom}_R(X,Y))$,\ \ $\mathrm{Tor}_i^R(X,Y):=
\mathrm{H}_{i}(X\otimes^\mathrm{L}_RY)$.\end{center}

\vspace{2mm}
{\bf Associated prime and support.} We write $\mathrm{Spec}R$ for the set of
prime ideals of $R$ and $\mathrm{Max}R$ for the set of
maximal ideals of $R$. For an ideal $\mathfrak{a}$ of $R$, we set
\begin{center}$\mathrm{V}(\mathfrak{a}):=\{\mathfrak{p}\in\textrm{Spec}R\hspace{0.03cm}|\hspace{0.03cm}\mathfrak{a}\subseteq\mathfrak{p}\}$.\end{center}
Let $M$ be an $R$-module. The set $\mathrm{Ass}_RM$ of \emph{associated prime} of $M$ is
the set of prime ideals $\mathfrak{p}$ of $R$
such that there exists a cyclic submodule $N$ of $M$ with $\mathfrak{p}=\mathrm{Ann}_RN$, the annihilator of $N$.
A prime ideal $\mathfrak{p}$ is said to be an \emph{attached prime} of $M$ if $\mathfrak{p}=\mathrm{Ann}_{R}(M/L)$ for some submodule $L$ of $M$. The set of attached primes of $M$ is denoted by $\mathrm{Att}_{R}M$. If $M$ is artinian, then $M$ admits a minimal secondary representation
$M=M_{1}+\cdots+M_{r}$ so that $M_{i}$ is $\mathfrak{p}_{i}$-secondary for $i=1,\cdots,r$. In this case, $\mathrm{Att}_{R}M=\{\mathfrak{p}_{1},\cdots,\mathfrak{p}_{r}\}$.

The \emph{support} of an $R$-module $M$ is the set \begin{center}$\mathrm{Supp}_RM:=\{\mathfrak{p}\in\textrm{Spec}R\hspace{0.03cm}|\hspace{0.03cm}M_\mathfrak{p}\neq0\}$.\end{center}For an $R$-complex $X$,
the \emph{support} of $X$ is defined as \begin{center}$\mathrm{Supp}_RX:=\{\mathfrak{p}\in\textrm{Spec}R\hspace{0.03cm}|\hspace{0.03cm}X_\mathfrak{p}\not\simeq 0\}$.\end{center}
By flatness of $R_\mathfrak{p}$ over $R$, one has $\mathrm{Supp}_RX=\bigcup_{i\in\mathbb{Z}}\mathrm{Supp}_R\mathrm{H}_i(X)$.

\vspace{2mm}
{\bf Local cohomology and local homology.}
Let $M$ be an $R$-module.

The \emph{$\mathfrak{a}$-torsion submodule} of $M$ is
\begin{center}$\Gamma_\mathfrak{a}(M):=\{m\in M\hspace{0.03cm}|\hspace{0.03cm}\mathfrak{a}^tm=0\ \textrm{for\ some\ integer}\ t\}\cong\underrightarrow{\textrm{lim}}\mathrm{Hom}_R(R/\mathfrak{a}^t,M)$.\end{center}
The module $M$ is called \emph{$\mathfrak{a}$-torsion} if $\Gamma_\mathfrak{a}(M)=M$, or equivalently, $\mathrm{Supp}_RM\subseteq\mathrm{V}(\mathfrak{a})$.
The association $M\mapsto \Gamma_\mathfrak{a}(M)$ extends to define a left exact additive functor on the
category of $R$-complexes, its right derived functor, denoted by $\mathrm{R}\Gamma_\mathfrak{a}(-)$, can be computed by $\mathrm{R}\Gamma_\mathfrak{a}(X)\simeq\Gamma_\mathfrak{a}(I)$, where $X\stackrel{\simeq}\rightarrow I$ is a semi-injective resolution of $X$. For each $R$-complex $X$ and $i\in\mathbb{Z}$, the
\emph{$i$th local cohomology} of $X$ with support in $\mathfrak{a}$ is the $R$-module \begin{center}$\mathrm{H}^i_\mathfrak{a}(X):=\mathrm{H}_{-i}(\mathrm{R}\Gamma_\mathfrak{a}(X))$.\end{center}

The \emph{$\mathfrak{a}$-adic completion} of
$M$ is \begin{center}$\Lambda^\mathfrak{a}(M):=\underleftarrow{\textrm{lim}}(R/\mathfrak{a}^t\otimes_RM)$.\end{center}
The map $M\mapsto \Lambda^\mathfrak{a}(M)$ extends to an additive functor on the category of
$R$-complexes. Following \cite{L}, this functor admits a left derived functor that is denoted by
$\mathrm{L}\Lambda^\mathfrak{a}(-)$, and that can be computed by $\mathrm{L}\Lambda^\mathfrak{a}(X)=\Lambda^\mathfrak{a}(F)$, where $F\stackrel{\simeq}\rightarrow X$ is a semi-flat resolution of $X$.
 For each $R$-complex $X$ and each integer $i$, the
\emph{$i$th derived completion} of $X$ with respect to $\mathfrak{a}$ is the $R$-module \begin{center}$\mathrm{H}_i^\mathfrak{a}(X):=\mathrm{H}_{i}(\mathrm{L}\Lambda^\mathfrak{a}(X))$.\end{center}

We need the next convergent spectral sequences, which are very useful in this paper.

\begin{lem}\label{lem:0.2}{\it{Let $\mathfrak{a},\mathfrak{b}$ be two ideals of $R$ with $\mathfrak{b}\subseteq\mathfrak{a}$. For an $R$-complex $X\in \mathrm{D}_\sqsubset(R)$, there exists a third quadrant spectral sequence
\begin{center}
$\xymatrix@C=10pt@R=5pt{
 E^2_{p,q}=\mathrm{Ext}^{-p}_R(R/\mathfrak{a},\mathrm{H}^{-q}_\mathfrak{b}(X))\ar@{=>}[r]_{\ \ \ \ \ \ p}&
 \mathrm{Ext}^{-p-q}_R(R/\mathfrak{a},X).}$\end{center}In particular, one has two spectral sequences
\begin{center}
$\xymatrix@C=10pt@R=5pt{
 E^2_{p,q}=\mathrm{Ext}^{-p}_R(R/\mathfrak{a},\mathrm{H}_{q}(X))\ar@{=>}[r]_{\ \ \ \ \ p}&
 \mathrm{Ext}^{-p-q}_R(R/\mathfrak{a},X),}$\end{center}
\begin{center}$\xymatrix@C=10pt@R=5pt{
 E^2_{p,q}=\mathrm{Ext}^{-p}_R(R/\mathfrak{a},\mathrm{H}^{-q}_\mathfrak{a}(X))\ar@{=>}[r]_{\ \ \ \ \ \ p}&
 \mathrm{Ext}^{-p-q}_R(R/\mathfrak{a},X).}$\end{center}}}
\end{lem}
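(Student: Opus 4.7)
\medskip\noindent\textbf{Proof proposal.}
The plan is to obtain the spectral sequence as the hyperderived form of Grothendieck's composition-of-functors spectral sequence for the pair $F = \mathrm{Hom}_R(R/\mathfrak{a},-)$ and $G = \Gamma_\mathfrak{b}(-)$. The essential input is a functorial comparison $F\circ G \cong F$ on $R$-modules: since $\mathfrak{b}\subseteq \mathfrak{a}$, any homomorphism $f\colon R/\mathfrak{a}\to M$ satisfies $\mathfrak{b}\cdot f(\overline{1}) \subseteq \mathfrak{a}\cdot f(\overline{1})=0$, so $f$ factors through $\Gamma_\mathfrak{b}(M)$ and $\mathrm{Hom}_R(R/\mathfrak{a},\Gamma_\mathfrak{b}(M)) = \mathrm{Hom}_R(R/\mathfrak{a},M)$ naturally in $M$.

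To promote this to complexes I would first check that $\Gamma_\mathfrak{b}$ carries injective $R$-modules to $F$-acyclic objects: over the noetherian ring $R$, every injective decomposes as $\bigoplus E(R/\mathfrak{p})$, and $\Gamma_\mathfrak{b}(E(R/\mathfrak{p}))$ equals $E(R/\mathfrak{p})$ or $0$ according as $\mathfrak{b}\subseteq\mathfrak{p}$ or not, which is injective either way. Then, for $X\in\mathrm{D}_\sqsubset(R)$, I would fix a bounded-above semi-injective resolution $X\xrightarrow{\simeq} I$; the complex $\Gamma_\mathfrak{b}(I)$ is a complex of injectives representing $\mathrm{R}\Gamma_\mathfrak{b}(X)$ (by the definition of $\mathrm{R}\Gamma_\mathfrak{b}$ recorded in the preliminaries), and the module-level identity applied termwise yields an isomorphism of complexes $\mathrm{Hom}_R(R/\mathfrak{a},\Gamma_\mathfrak{b}(I))\xrightarrow{\cong}\mathrm{Hom}_R(R/\mathfrak{a},I)$, so that
\[ \mathrm{RHom}_R(R/\mathfrak{a},\mathrm{R}\Gamma_\mathfrak{b}(X))\simeq \mathrm{RHom}_R(R/\mathfrak{a},X). \]
At this point, the standard hyperhomology (Cartan--Eilenberg) spectral sequence for $\mathrm{Hom}_R(R/\mathfrak{a},-)$ applied to $\mathrm{R}\Gamma_\mathfrak{b}(X)$ reads, in cohomological indexing,
\[ E_2^{p,q}=\mathrm{Ext}^{p}_R(R/\mathfrak{a},\mathrm{H}^{q}_\mathfrak{b}(X))\Longrightarrow \mathrm{Ext}^{p+q}_R(R/\mathfrak{a},X), \]
and re-indexing $(p,q)\mapsto(-p,-q)$ produces the stated third-quadrant sequence. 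The two specialisations follow by taking $\mathfrak{b}=0$ (so $\Gamma_0=\mathrm{id}$ and $\mathrm{H}^{-q}_0(X)=\mathrm{H}_q(X)$) and $\mathfrak{b}=\mathfrak{a}$, respectively.

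The main technical obstacle is the derived identification itself: one must be sure that $\Gamma_\mathfrak{b}(I)$ is not only a model for $\mathrm{R}\Gamma_\mathfrak{b}(X)$ but is ``good enough''—being a bounded-above complex of injectives over a noetherian ring—so that $\mathrm{Hom}_R(R/\mathfrak{a},\Gamma_\mathfrak{b}(I))$ genuinely computes $\mathrm{RHom}_R(R/\mathfrak{a},\mathrm{R}\Gamma_\mathfrak{b}(X))$ without introducing a further semi-injective resolution. Both points rest squarely on the noetherian hypothesis on $R$, which makes $\Gamma_\mathfrak{b}$ preserve injectivity termwise; convergence of the spectral sequence into the third quadrant then follows from $X\in\mathrm{D}_\sqsubset(R)$ (which controls the range of $q$ by bounding $\sup X$) combined with the vanishing $\mathrm{Ext}^{p}_R(R/\mathfrak{a},-)=0$ for $p<0$.
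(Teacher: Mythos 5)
Your proposal is correct and follows essentially the same route as the paper: both arguments rest on the identification $\mathrm{Hom}_R(R/\mathfrak{a},\Gamma_\mathfrak{b}(I))=\mathrm{Hom}_R(R/\mathfrak{a},I)$ for $\mathfrak{b}\subseteq\mathfrak{a}$ together with the fact that $\Gamma_\mathfrak{b}$ preserves injectives over a noetherian ring, the paper merely packaging the hyperderived/Grothendieck spectral sequence as the explicit third-quadrant bicomplex $M_{p,q}=\mathrm{Hom}_R(P_{-p},\Gamma_\mathfrak{b}(I_q))$ after normalizing $\sup X=0$. Your extra care about $\Gamma_\mathfrak{b}(I)$ being a bounded-above complex of injectives (hence semi-injective) is exactly what justifies the paper's quasi-isomorphism $\mathrm{Hom}_R(P,\Gamma_\mathfrak{b}(I))\simeq\mathrm{Hom}_R(R/\mathfrak{a},I)$.
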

\begin{proof} Set $Y:=\Sigma^{-\mathrm{sup}X}X$. By replacing $X$ with $Y$, we
may assume that $\mathrm{sup}X=0$.
Let $P$ be a projective resolution of $R/\mathfrak{a}$, and there is a semi-injective
resolution $I$ of $X$ such that $I_i=0$ for all $i>0$.
Set $M_{p,q}=\mathrm{Hom}_R(P_{-p},\Gamma_\mathfrak{b}(I_q))$. Then $\mathcal{M}=\{M_{p,q}\}$ is a third quadrant bicomplex, and hence
$\mathrm{Hom}_R(P,\Gamma_\mathfrak{b}(I))$ is the total complex of $\mathcal{M}$.
Note that
\begin{center}$\mathrm{Hom}_R(P,\Gamma_\mathfrak{b}(I))\simeq \mathrm{Hom}_R(R/\mathfrak{a},I)$\end{center}in $\mathrm{D}(R)$, we obtain the desired spectral sequence.
\end{proof}

\begin{lem}\label{lem:0.3}{\it{Let $\mathfrak{a},\mathfrak{b}$ be two ideals of $R$ with $\mathfrak{b}\subseteq\mathfrak{a}$. For an $R$-complex $X\in\mathrm{D}_\sqsupset(R)$, there exists a first quadrant spectral sequence
\begin{center}$\xymatrix@C=10pt@R=5pt{
 E^2_{p,q}=\mathrm{Tor}_{p}^R(R/\mathfrak{a},\mathrm{H}_{q}^\mathfrak{b}(X))\ar@{=>}[r]_{\ \ \ \ \ p}&
 \mathrm{Tor}_{p+q}^R(R/\mathfrak{a},X).}$\end{center}In particular, one has two spectral sequences
\begin{center}
$\xymatrix@C=10pt@R=5pt{
 E^2_{p,q}=\mathrm{Tor}_{p}^R(R/\mathfrak{a},\mathrm{H}_{q}(X))\ar@{=>}[r]_{\ \ \ \ \ p}&
 \mathrm{Tor}_{p+q}^R(R/\mathfrak{a},X),}$\end{center}
\begin{center}$\xymatrix@C=10pt@R=5pt{
 E^2_{p,q}=\mathrm{Tor}_{p}^R(R/\mathfrak{a},\mathrm{H}_{q}^\mathfrak{a}(X))\ar@{=>}[r]_{\ \ \ \ \ p}&
 \mathrm{Tor}_{p+q}^R(R/\mathfrak{a},X).}$\end{center}}}
\end{lem}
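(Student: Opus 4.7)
The plan is to parallel the bicomplex argument from the proof of Lemma \ref{lem:0.2}, replacing $\mathrm{RHom}_R$ by the derived tensor product and $\mathrm{R}\Gamma_{\mathfrak{b}}$ by $\mathrm{L}\Lambda^{\mathfrak{b}}$. First I would shift $X$ to assume $\inf X = 0$, so that the bicomplex to be built will live in the first quadrant. Then I would choose a projective resolution $P\xrightarrow{\simeq}R/\mathfrak{a}$ concentrated in non-negative degrees together with a semi-flat resolution $F\xrightarrow{\simeq}X$ with $F_q=0$ for $q<0$; this is possible since $X\in\mathrm{D}_{\sqsupset}(R)$.

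Next I would form the first-quadrant bicomplex $N_{p,q}:=P_p\otimes_R\Lambda^{\mathfrak{b}}(F_q)$ and consider its total complex $P\otimes_R\Lambda^{\mathfrak{b}}(F)$. Filtering by rows, the vertical differential has homology $P_p\otimes_R\mathrm{H}_q(\Lambda^{\mathfrak{b}}(F))=P_p\otimes_R\mathrm{H}_q^{\mathfrak{b}}(X)$ (using flatness of each $P_p$ and the definition $\mathrm{H}_q^{\mathfrak{b}}(X)=\mathrm{H}_q(\mathrm{L}\Lambda^{\mathfrak{b}}(X))$), and then the horizontal differential yields
\[
E^2_{p,q}=\mathrm{Tor}_p^R(R/\mathfrak{a},\mathrm{H}_q^{\mathfrak{b}}(X)),
\]
which is the desired second page.

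To identify the abutment I would establish the quasi-isomorphism
\[
P\otimes_R\Lambda^{\mathfrak{b}}(F)\simeq R/\mathfrak{a}\lotimes_R X
\]
in $\mathrm{D}(R)$. The left-hand side manifestly represents $R/\mathfrak{a}\lotimes_R\mathrm{L}\Lambda^{\mathfrak{b}}(X)$, so the crux is to absorb the derived completion. Since $\mathfrak{b}\subseteq\mathfrak{a}$, the module $R/\mathfrak{a}$ is $\mathfrak{b}$-torsion, and Greenlees--May duality (applied in the form $Y\simeq Y\lotimes_R\mathrm{R}\Gamma_{\mathfrak{b}}(R)$ for any $\mathfrak{b}$-torsion $Y$, together with $\mathrm{R}\Gamma_{\mathfrak{b}}\circ\mathrm{L}\Lambda^{\mathfrak{b}}\simeq\mathrm{R}\Gamma_{\mathfrak{b}}$) yields $R/\mathfrak{a}\lotimes_R\mathrm{L}\Lambda^{\mathfrak{b}}(X)\simeq R/\mathfrak{a}\lotimes_R\mathrm{R}\Gamma_{\mathfrak{b}}(X)\simeq R/\mathfrak{a}\lotimes_R X$. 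The two stated specialisations then follow at once: taking $\mathfrak{b}=0$ gives $\mathrm{L}\Lambda^{0}=\mathrm{id}$ and $\mathrm{H}_q^{0}(X)=\mathrm{H}_q(X)$, while taking $\mathfrak{b}=\mathfrak{a}$ gives the local-homology version.

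The main technical obstacle is the abutment identification, since it rests on the Greenlees--May machinery rather than on the formal bicomplex argument. The remaining steps---flatness of projectives for the row-filtration $E^1$-page, first-quadrant convergence of the bicomplex spectral sequence, and the fact that a semi-flat resolution computes $\mathrm{L}\Lambda^{\mathfrak{b}}$---are standard homological bookkeeping that mirrors the proof of Lemma \ref{lem:0.2}.
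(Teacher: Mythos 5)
Your proof is correct and follows essentially the same route as the paper: shift so that $\inf X=0$, form the first-quadrant bicomplex $P_p\otimes_R\Lambda^{\mathfrak{b}}(F_q)$, and read off the spectral sequence of its total complex. The only difference is that you justify the abutment identification $P\otimes_R\Lambda^{\mathfrak{b}}(F)\simeq R/\mathfrak{a}\lotimes_R X$ via the Greenlees--May equivalence, a step the paper merely asserts; your justification is valid.
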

\begin{proof} Set $Y:=\Sigma^{-\mathrm{inf}X}X$. By replacing $X$ with $Y$, we
may assume that $\mathrm{inf}X=0$.
Let $F$ be a projective resolution of $R/\mathfrak{a}$, and there exists a semi-projective
resolution $P$ of $X$ such that $P_i=0$ for all $i<0$.
Set $M_{p,q}=F_p\otimes_R\Lambda^\mathfrak{b}(P_q)$. Then $\mathcal{M}=\{M_{p,q}\}$ is a first quadrant bicomplex, and so
the complex $F\otimes_R\Lambda^\mathfrak{b}(P)$ is the total complex of $\mathcal{M}$. Since
\begin{center}$F\otimes_R\Lambda^\mathfrak{b}(P)\simeq R/\mathfrak{a}\otimes_RP$\end{center}in $\mathrm{D}(R)$,
 we obtain the desired spectral sequence.
\end{proof}

\bigskip
\section{\bf Cofiniteness of complexes in the case $d\leq2$}
The task of this section is to answer Question 2 completely in the cases $\mathrm{dim}R/\mathfrak{a}\leq1$, or $\mathrm{dim}R\leq2$, or $\mathrm{ara}(\mathfrak{a})\leq1$.

Recall that a \emph{Serre subcategory} $\mathcal{S}$ of the category of $R$-modules is a class such
that for any short exact sequence
$0\rightarrow L\rightarrow M\rightarrow N\rightarrow 0$, $M$ is in $\mathcal{S}$ if and only if $L$ and $N$ are in $\mathcal{S}$.
The next result is frequently used through the article, enable us to demonstrate some new facts and improve some older facts about
the local cohomology.

\begin{lem}\label{lem:0.6}{\it{Let $\mathcal{S}$ be a Serre subcategory of $R$-modules and $\mathfrak{a},\mathfrak{b}$ two ideals of $R$ with $\mathfrak{b}\subseteq\mathfrak{a}$, and let $X\in\mathrm{D}_\sqsubset(R)$ and $s\geq0,t\geq-\mathrm{sup}X$ such that

$(1)$ $\mathrm{Ext}^{s+t}_R(R/\mathfrak{a},X)$ is in $\mathcal{S}$;

$(2)$ $\mathrm{Ext}^{s+1+i}_R(R/\mathfrak{a},\mathrm{H}^{t-i}_\mathfrak{b}(X))$ is in $\mathcal{S}$ for all $1\leq i\leq t+\mathrm{sup}X$;

$(3)$ $\mathrm{Ext}^{s-1-i}_R(R/\mathfrak{a},\mathrm{H}^{t+i}_\mathfrak{b}(X))$ is in $\mathcal{S}$ for all $1\leq i\leq s-1$.\\
Then the $R$-module $\mathrm{Ext}^{s}_R(R/\mathfrak{a},\mathrm{H}^{t}_\mathfrak{b}(X))$ belongs to $\mathcal{S}$.}}
\end{lem}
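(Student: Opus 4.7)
The plan is to apply the first spectral sequence of Lemma \ref{lem:0.2}, namely $E^2_{p,q}=\ext^{-p}_R(R/\fa,\HH^{-q}_\fb(X))\Rightarrow\ext^{-p-q}_R(R/\fa,X)$, and to localize the analysis at the single position $(p,q)=(-s,-t)$, where $E^2_{-s,-t}=\ext^s_R(R/\fa,\HH^t_\fb(X))$ is precisely the module we wish to place in $\mathcal{S}$. The standing hypotheses $s\geq 0$ and $t\geq-\sup X$ guarantee that $(-s,-t)$ lies inside the support region of the spectral sequence.

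The differentials of this homologically indexed third-quadrant spectral sequence take the form $d_r\colon E^r_{p,q}\to E^r_{p-r,q+r-1}$. Thus at the position $(-s,-t)$, the source of an incoming $d_r$ is a subquotient of $E^2_{-s+r,-t-r+1}=\ext^{s-r}_R(R/\fa,\HH^{t+r-1}_\fb(X))$, and the target of an outgoing $d_r$ is a subquotient of $E^2_{-s-r,-t+r-1}=\ext^{s+r}_R(R/\fa,\HH^{t-r+1}_\fb(X))$. Writing $r=i+1$ with $i\geq 1$, these become $\ext^{s-1-i}_R(R/\fa,\HH^{t+i}_\fb(X))$ and $\ext^{s+1+i}_R(R/\fa,\HH^{t-i}_\fb(X))$, which match the modules in hypotheses (3) and (2) respectively. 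For $i>s-1$ the first has negative Ext index and vanishes, while for $i>t+\sup X$ the second has local-cohomology index strictly below $-\sup X$ and vanishes. In all remaining cases, both source and target lie in $\mathcal{S}$, hence so do $\im(d_r^{\mathrm{in}})$ and $\im(d_r^{\mathrm{out}})$.

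The passage from $E^r_{-s,-t}$ to $E^{r+1}_{-s,-t}$ consists of first replacing $E^r_{-s,-t}$ by its submodule $\ker(d_r^{\mathrm{out}})$, whose cokernel is $\im(d_r^{\mathrm{out}})\in\mathcal{S}$, and then factoring out $\im(d_r^{\mathrm{in}})\in\mathcal{S}$. Since $\mathcal{S}$ is a Serre subcategory, $E^r_{-s,-t}\in\mathcal{S}$ if and only if $E^{r+1}_{-s,-t}\in\mathcal{S}$. The spectral sequence is bounded on each total-degree diagonal because its support lies in $p\leq 0,\ q\leq\sup X$, so once $r>\max\{s,\,t+\sup X+1\}$ both the incoming and outgoing differentials at $(-s,-t)$ vanish and $E^r_{-s,-t}=E^\infty_{-s,-t}$. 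Convergence exhibits $E^\infty_{-s,-t}$ as a subquotient of the abutment $\ext^{s+t}_R(R/\fa,X)$, which is in $\mathcal{S}$ by hypothesis (1); hence $E^\infty_{-s,-t}\in\mathcal{S}$. A finite backward induction from $r_0$ down to $r=2$ then yields $E^2_{-s,-t}\in\mathcal{S}$, which is the desired conclusion.

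The hard part is the bookkeeping: the shifts $(p,q)\mapsto(p-r,q+r-1)$ must be matched exactly against the index ranges in (2) and (3), and one must verify that for large $r$ the relevant neighboring positions fall out of the support region so that the spectral sequence actually stabilizes at $(-s,-t)$ after finitely many pages. Once the indices are lined up, the Serre-closure property and the backward induction are essentially mechanical.
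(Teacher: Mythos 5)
Your proposal is correct and follows essentially the same route as the paper: the first spectral sequence of Lemma \ref{lem:0.2}, identification of $E^2_{-s,-t}$ with the target module, the observation that the incoming and outgoing differentials at $(-s,-t)$ have sources and targets covered by hypotheses (3) and (2) (or vanish for degree reasons), degeneration at $(-s,-t)$ for large $r$ with $E^\infty_{-s,-t}$ a subquotient of the abutment, and a backward induction on the page number using closure of $\mathcal{S}$ under subquotients and extensions. The only difference is cosmetic: the paper splits into the cases $s=0$, $s=1$, $s\geq 2$, whereas your index bookkeeping handles all cases uniformly.
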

\begin{proof} Consider the first spectral sequence in Lemma \ref{lem:0.2}. For $s=0$, there is a finite filtration\begin{center}
$0=U^{-t-\mathrm{sup}X-1}\subseteq U^{-t-\mathrm{sup}X}\subseteq\cdots \subseteq U^{0}=\mathrm{Ext}^{t}_R(R/\mathfrak{a},X)$,
\end{center}such that $U^{p}/U^{p-1}\cong E^\infty_{p,-t-p}$ for $p+\mathrm{sup}X\geq -t$. Let $r\geq 2$. Consider the differential
\begin{center}$0=E^r_{r,-t-r+1}\xrightarrow{d^r_{r,-t-r+1}}E^r_{0,-t}
\xrightarrow{d^r_{0,-t}}E^r_{-r,-t+r-1}.$
\end{center}We obtain
 the following short exact sequence
\begin{center}$0\rightarrow E^{r+1}_{0,-t}\rightarrow E^{r}_{0,-t}\rightarrow\mathrm{im}d^r_{0,-t}\rightarrow0$.
\end{center}
Since $-t+r-1\geq -t+1$ and $E^r_{-r,-t+r-1}$
is a subquotient of $E^2_{-r,-t+r-1}$, it follows that $\mathrm{im}d^r_{0,-t}\in\mathcal{S}$ for $r\geq 2$. Since $\mathrm{sup}X$ is finite, $E^{r}_{0,-t}\cong E^{\infty}_{0,-t}\cong U^{0}/U^{-1}\in\mathcal{S}$ for $r\gg0$.
By using the above sequence
inductively, one has
$\mathrm{Hom}_R(R/\mathfrak{a},\mathrm{H}^{t}_\mathfrak{b}(X))\cong E^{2}_{0,-t}\in\mathcal{S}$. For $s=1$, there exists a finite filtration\begin{center}
$0=U^{-t-\mathrm{sup}X-2}\subseteq U^{-t-\mathrm{sup}X-1}\subseteq\cdots \subseteq U^{0}=\mathrm{Ext}^{t+1}_R(R/\mathfrak{a},X)$,
\end{center}so that $U^{p}/U^{p-1}\cong E^\infty_{p,-t-1-p}$ for $p+\mathrm{sup}X\geq -t-1$. Let $r\geq 2$. Consider the differential
\begin{center}$0=E^r_{-1+r,-t-r+1}\xrightarrow{d^r_{-1+r,-t-r+1}}E^r_{-1,-t}
\xrightarrow{d^r_{-1,-t}}E^r_{-1-r,-t+r-1}.$
\end{center}
As $-t+r-1\geq -t+1$, it follows that $\mathrm{im}d^r_{-1,-t}\in\mathcal{S}$ for $r\geq 2$. Since
$E^{r}_{-1,-t}\cong E^{\infty}_{-1,-t}\cong U^{-1}/U^{-2}\in\mathcal{S}$ for $r\gg0$,
 the short exact sequence
\begin{center}$0\rightarrow E^{r+1}_{-1,-t}\rightarrow E^{r}_{-1,-t}\rightarrow\mathrm{im}d^r_{-1,-t}\rightarrow0$
\end{center} implies that
$\mathrm{Ext}^1_R(R/\mathfrak{a},\mathrm{H}^{t}_\mathfrak{b}(X))\cong E^{2}_{-1,-t}\in\mathcal{S}$.
For $s\geq 2$,
consider the filtration\begin{center}
$0=U^{-s-t-\mathrm{sup}X-1}\subseteq U^{-s-t-\mathrm{sup}X}\subseteq\cdots \subseteq U^{0}=\mathrm{Ext}^{s+t}_R(R/\mathfrak{a},X)$,
\end{center}where $U^{p}/U^{p-1}\cong E^\infty_{p,-s-t-p}$ for $-s-t\leq p+\mathrm{sup}X$. Let $r\geq 2$. Consider the differential
\begin{center}$E^r_{-s+r,-t-r+1}\xrightarrow{d^r_{-s+r,-t-r+1}}E^r_{-s,-t}
\xrightarrow{d^r_{-s,-t}}E^r_{-s-r,-t+r-1}.$
\end{center}
As $E^r_{-s-r,-t+r-1}=0$ for $r\geq t+\mathrm{sup}X+2$ and $E^r_{-s+r,-t-r+1}=0$ for $r\geq s+1$, it follows from the conditions that $\mathrm{im}d^r_{-s+r,-t-r+1}$ and $\mathrm{im}d^r_{-s,-t}$ are in $\mathcal{S}$ for $r\geq 2$.
Let $r\geq s+1$. Then $E^r_{-s+r,-t-r+1}=0$. So we have a short exact sequence \begin{center}$
 0\rightarrow E^{r+1}_{-s,-t}\rightarrow E^{r}_{-s,-t}\rightarrow \mathrm{im}d^{r}_{-s,-t}\rightarrow0$.
\end{center}
Since $E^{r}_{-s,-t}\cong E^{\infty}_{-s,-t}\cong U^{-s}/U^{-s-1}\in\mathcal{S}$ for $r\gg0$, it follows from the above sequence that
$E^{s+1}_{-s,-t}\in\mathcal{S}$. Thus the following exact sequence
\begin{center}$
 0\rightarrow \mathrm{im}d^{s}_{0,-t-s+1}\rightarrow \mathrm{ker}d^{s}_{-s,-t}\rightarrow E^{s+1}_{-s,-t}\rightarrow0$,
\end{center}implies that $\mathrm{ker}d^{s}_{-s,-t}\in\mathcal{S}$, and the next exact sequence \begin{center}$
 0\rightarrow \mathrm{ker}d^{s}_{-s,-t}\rightarrow E^{s}_{-s,-t}\rightarrow \mathrm{im}d^{s}_{-s,-t}\rightarrow0$,
\end{center}yields that $E^{s}_{-s,-t}\in\mathcal{S}$.
By repeating this process, we obtain that
$\mathrm{Ext}^s_R(R/\mathfrak{a},\mathrm{H}^{t}_\mathfrak{b}(X))\cong E^{2}_{-s,-t}\in\mathcal{S}$. The proof is complete.
\end{proof}

\begin{cor}\label{lem:1.8}{\it{Let $\mathcal{S}$ be a Serre subcategory of $R$-modules and $\mathfrak{a},\mathfrak{b}$ two ideals of $R$ with $\mathfrak{b}\subseteq\mathfrak{a}$, and let $X\in\mathrm{D}_\sqsubset(R)$ and $s\geq0,t\geq-\mathrm{sup}X$. If $\mathrm{Ext}^s_R(R/\mathfrak{a},X)\in\mathcal{S}$ and $\mathrm{Ext}^s_R(R/\mathfrak{a},\mathrm{H}^{j}_\mathfrak{b}(X))\in\mathcal{S}$ for all $s$ and $j\neq t$, then $\mathrm{Ext}^s_R(R/\mathfrak{a},\mathrm{H}^{t}_\mathfrak{b}(X))\in\mathcal{S}$ for all $s$.}}
\end{cor}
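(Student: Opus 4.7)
The plan is to derive this as a direct corollary of Lemma~\ref{lem:0.6}, applied once for each fixed non-negative integer $s$ while keeping the ambient $t$ fixed throughout. Since $\mathrm{Ext}^s_R(R/\mathfrak{a},-)=0$ for $s<0$, and $0$ lies in every Serre subcategory, it is enough to produce the conclusion in the range $s\geq 0$.

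Fix any $s\geq 0$. I will check that the three hypotheses (1)--(3) of Lemma~\ref{lem:0.6} are satisfied for the pair $(s,t)$ under the assumptions of the corollary. Hypothesis (1), which asks for $\mathrm{Ext}^{s+t}_R(R/\mathfrak{a},X)\in\mathcal{S}$, is immediate from the blanket assumption $\mathrm{Ext}^k_R(R/\mathfrak{a},X)\in\mathcal{S}$ for every $k$, applied to $k=s+t$. In hypothesis (2), each index $j:=t-i$ with $1\leq i\leq t+\mathrm{sup}X$ satisfies $j\neq t$, so by the off-diagonal hypothesis the module $\mathrm{Ext}^{s+1+i}_R(R/\mathfrak{a},\mathrm{H}^{j}_\mathfrak{b}(X))$ lies in $\mathcal{S}$. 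Hypothesis (3) is handled symmetrically: each $j:=t+i$ with $1\leq i\leq s-1$ again satisfies $j\neq t$ (and the range is empty for $s\in\{0,1\}$, making the condition vacuous), so the requisite Ext is in $\mathcal{S}$.

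Applying Lemma~\ref{lem:0.6} then yields $\mathrm{Ext}^s_R(R/\mathfrak{a},\mathrm{H}^t_\mathfrak{b}(X))\in\mathcal{S}$ for the chosen $s$, and since $s\geq 0$ was arbitrary the full conclusion follows. There is no real obstacle; the only point worth underlining is that the index restrictions in (2) and (3) of Lemma~\ref{lem:0.6} force $i\geq 1$, so every local cohomology module occurring in those hypotheses is $\mathrm{H}^j_\mathfrak{b}(X)$ with $j\neq t$. Consequently the corollary's off-diagonal assumption on the $\mathrm{H}^j_\mathfrak{b}(X)$ feeds both conditions at once, and the statement is essentially a uniform repackaging of Lemma~\ref{lem:0.6}.
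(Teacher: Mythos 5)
Your proposal is correct and is exactly the intended derivation: the paper states this corollary without proof as an immediate consequence of Lemma \ref{lem:0.6}, and your verification that hypotheses (1)--(3) of that lemma are supplied by the corollary's assumptions (since the indices $t-i$ and $t+i$ with $i\geq 1$ are always $\neq t$) is precisely the argument. Nothing is missing.
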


Let $n\geq-1$ be an integer. Recall that an $R$-module $M$ is said to be in $FD_{\leq n}$
if there is a finitely generated submodule $N$ of $M$ such that $\mathrm{dim}_RM/N\leq n$.
By definition, any finitely generated $R$-module and any $R$-module with dimension
at most $n$ are in $FD_{\leq n}$. An $R$-module $M$ is said to be \emph{weakly Laskerian} if the set $\mathrm{Ass}_RM/N$
is finite for each submodule $N$ of $M$. An $R$-module $M$ is \emph{minimax} if there is a
finitely generated submodule $N$ of $M$, such that $M/N$ is artinian. If $M$ is minimax then
$M\in FD_{\leq 0}$; if $M$ is weakly Laskerian then $M\in FD_{\leq 1}$ (see \cite[Remark 2.2]{AN}).

\begin{lem}\label{lem:6.6}{\it{Let $n=0,1$ and $M$ be in $FD_{\leq n}$. Then
 $M$ is $\mathfrak{a}$-cofinite if and only if $\mathrm{Supp}_RM\subseteq\mathrm{V}(\mathfrak{a})$ and $\mathrm{Ext}^i_R(R/\mathfrak{a},M)$ are finitely generated for $i\leq n$.}}
\end{lem}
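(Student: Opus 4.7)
The ``only if'' direction will be immediate from the definition of $\mathfrak{a}$-cofiniteness. For the converse, my plan is to exploit the $FD_{\leq n}$ structure to cut $M$ into a finitely generated piece and a piece of dimension $\leq n$, and then invoke the classical cofiniteness criterion for modules of dimension at most one.

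First, by definition of $FD_{\leq n}$, I would choose a finitely generated submodule $N\subseteq M$ with $\dim_R(M/N)\leq n$. Since $\mathrm{Supp}_R N\subseteq\mathrm{Supp}_R M\subseteq\mathrm{V}(\mathfrak{a})$, the module $N$ is finitely generated and $\mathfrak{a}$-torsion, hence automatically $\mathfrak{a}$-cofinite. Applying $\mathrm{Ext}^{\bullet}_R(R/\mathfrak{a},-)$ to the short exact sequence
\[
0\longrightarrow N\longrightarrow M\longrightarrow M/N\longrightarrow 0
\]
and combining the hypothesis on $M$ in degrees $i\leq n$ with the fact that $\mathrm{Ext}^{i+1}_R(R/\mathfrak{a},N)$ is finitely generated in every degree will show that $\mathrm{Ext}^{i}_R(R/\mathfrak{a},M/N)$ is finitely generated for all $i\leq n$. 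Clearly $\mathrm{Supp}_R(M/N)\subseteq\mathrm{V}(\mathfrak{a})$ and $\dim_R(M/N)\leq n$ as well.

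The crux of the argument will then be the classical cofiniteness theorem (for $n=0$, essentially Melkersson; for $n=1$, the result of Bahmanpour--Naghipour and Kawasaki): if $L$ is an $R$-module with $\mathrm{Supp}_R L\subseteq\mathrm{V}(\mathfrak{a})$, $\dim_R L\leq 1$, and $\mathrm{Ext}^{i}_R(R/\mathfrak{a},L)$ is finitely generated for $i=0,1$ (or merely for $i=0$ when $\dim_R L\leq 0$), then $L$ is $\mathfrak{a}$-cofinite. Applying this to $L=M/N$ will give that $M/N$ is $\mathfrak{a}$-cofinite. A second pass through the long exact sequence above, with both $N$ and $M/N$ now $\mathfrak{a}$-cofinite, will then deliver finite generation of $\mathrm{Ext}^{i}_R(R/\mathfrak{a},M)$ in \emph{every} degree, whence $M$ itself is $\mathfrak{a}$-cofinite. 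The only nontrivial input is the classical $\dim\leq 1$ criterion; the rest is a routine reduction via the $FD_{\leq n}$ structure and a standard long-exact-sequence chase. The main obstacle is thus not in the reduction step but in accepting that classical theorem as known; since no higher-degree Ext vanishing beyond degrees $\leq n$ is assumed, nothing finer than this input can be expected to close the argument.
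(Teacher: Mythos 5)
Your proposal is correct and follows essentially the same route as the paper: pick a finitely generated submodule $N$ with $\dim_R(M/N)\leq n$, chase the long exact sequence of $\mathrm{Ext}_R^{\bullet}(R/\mathfrak{a},-)$ to transfer finite generation in degrees $\leq n$ to $M/N$, invoke the classical $\dim\leq 1$ cofiniteness criterion (the paper cites Melkersson and Bahmanpour--Naghipour--Sedghi for exactly this), and conclude for $M$ as an extension of two $\mathfrak{a}$-cofinite modules. No gaps.
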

\begin{proof} `Only if' part is obvious.

`If' part. Let $N$ be a finitely generated submodule of $M$ such that $\mathrm{dim}_RM/N\leq n$. Then the exact sequence
\begin{center}
$\mathrm{Hom}_R(R/\mathfrak{a},M)\rightarrow\mathrm{Hom}_R(R/\mathfrak{a},M/N)
\rightarrow\mathrm{Ext}^1_R(R/\mathfrak{a},N)\rightarrow\mathrm{Ext}^1_R(R/\mathfrak{a},M)\rightarrow \mathrm{Ext}^1_R(R/\mathfrak{a},M/N)\rightarrow\mathrm{Ext}^2_R(R/\mathfrak{a},N)$
\end{center}implies that $\mathrm{Ext}^i_R(R/\mathfrak{a},M/N)$ are finitely generated for all $i\leq n$. Hence $M/N$ is $\mathfrak{a}$-cofinite by \cite[Lemma 2.1]{LM1} and \cite[Proposition 2.6]{BNS} and then $M$ is $\mathfrak{a}$-cofinite.
\end{proof}

Using similar arguments, one can prove the dual version of Lemma \ref{lem:0.6}.

\begin{lem}\label{lem:0.7}{\it{Let $\mathcal{S}$ be a Serre subcategory of $R$-modules and $\mathfrak{a},\mathfrak{b}$ two ideals of $R$ with $\mathfrak{b}\subseteq\mathfrak{a}$, and let $X\in\mathrm{D}_\sqsupset(R)$ and $s\geq0,t\geq\mathrm{inf}X$ such that

$(1)$ $\mathrm{Tor}_{s+t}^R(R/\mathfrak{a},X)$ is in $\mathcal{S}$;

$(2)$ $\mathrm{Tor}_{s+1+i}^R(R/\mathfrak{a},\mathrm{H}_{t-i}^\mathfrak{b}(X))$ is in $\mathcal{S}$ for all $1\leq i\leq t-\mathrm{inf}X$;

$(3)$ $\mathrm{Tor}_{s-1-i}^R(R/\mathfrak{a},\mathrm{H}_{t+i}^\mathfrak{b}(X))$ is in $\mathcal{S}$ for all $1\leq i\leq s-1$.\\
Then the $R$-module $\mathrm{Tor}_{s}^R(R/\mathfrak{a},\mathrm{H}_{t}^\mathfrak{b}(X))$ belongs to $\mathcal{S}$.}}
\end{lem}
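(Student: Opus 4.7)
The plan is to dualize the proof of Lemma~\ref{lem:0.6} verbatim, replacing the third-quadrant Ext spectral sequence from Lemma~\ref{lem:0.2} by the first-quadrant Tor spectral sequence
\[
E^2_{p,q}=\tor_p^R(R/\fa,\HH_q^\fb(X))\ \Longrightarrow\ \tor_{p+q}^R(R/\fa,X)
\]
of Lemma~\ref{lem:0.3}. After a shift $Y:=\othershift^{-\inf X}X$ we may assume $\inf X=0$, so the spectral sequence is concentrated in the first quadrant and converges stably at each spot after finitely many pages. The $(s,t)$-entry on the $E^2$-page is precisely the module we want to place in $\mathcal{S}$.

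First I would invoke convergence: the abutment $\tor_{s+t}^R(R/\fa,X)$ carries a finite filtration whose $p$th associated quotient is $E^\infty_{p,s+t-p}$, and in particular $E^\infty_{s,t}$ is a subquotient of a module in $\mathcal{S}$ by hypothesis~(1), hence belongs to $\mathcal{S}$. Since the spectral sequence is first quadrant, $E^r_{s,t}\cong E^\infty_{s,t}$ for $r\gg 0$. The main work is then to descend from $r\gg 0$ back to $r=2$.

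The descent uses the differentials at the spot $(s,t)$:
\[
E^r_{s+r,t-r+1}\xrightarrow{d^r_{s+r,t-r+1}}E^r_{s,t}\xrightarrow{d^r_{s,t}}E^r_{s-r,t+r-1},
\]
which yield the two short exact sequences
\[
0\to \ker d^r_{s,t}\to E^r_{s,t}\to \im d^r_{s,t}\to 0,\qquad 0\to \im d^r_{s+r,t-r+1}\to\ker d^r_{s,t}\to E^{r+1}_{s,t}\to 0.
\]
For $r\geq 2$, the target $E^r_{s-r,t+r-1}$ is a subquotient of $E^2_{s-r,t+r-1}=\tor_{s-r}^R(R/\fa,\HH_{t+r-1}^\fb(X))$; this vanishes when $r>s$ (first quadrant) and otherwise coincides with hypothesis~(3) under the substitution $i=r-1$, so $\im d^r_{s,t}\in\mathcal{S}$. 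Similarly the source $E^r_{s+r,t-r+1}$ is a subquotient of $E^2_{s+r,t-r+1}=\tor_{s+r}^R(R/\fa,\HH_{t-r+1}^\fb(X))$, which vanishes for $r>t+1=t-\inf X+1$ and is otherwise covered by hypothesis~(2) with $i=r-1$, so $\im d^r_{s+r,t-r+1}\in\mathcal{S}$. Thus membership in $\mathcal{S}$ propagates from $E^{r+1}_{s,t}$ to $E^r_{s,t}$ at every page, and the descent ends with $E^2_{s,t}=\tor_s^R(R/\fa,\HH_t^\fb(X))\in\mathcal{S}$.

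The only obstacle is the bookkeeping at the small values $s=0,1$, exactly as in the proof of Lemma~\ref{lem:0.6}: when $s=0$ the outgoing differentials land in negative $p$ and are automatically zero, so only the incoming ones (governed by hypothesis~(2)) need be controlled; when $s=1$ the outgoing differential vanishes for $r\geq 2$ for the same reason; for $s\geq 2$ one needs both the outgoing (controlled by~(3) in the range $1\leq i\leq s-1$) and the incoming differentials. Once these boundary ranges are matched against the index ranges in the hypotheses, the conclusion follows by the same inductive argument, and no genuinely new difficulty appears beyond the mirrored indexing.
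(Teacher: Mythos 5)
Your proposal is correct and is exactly the argument the paper intends: the paper gives no separate proof of Lemma~\ref{lem:0.7}, stating only that it follows ``using similar arguments'' from Lemma~\ref{lem:0.6}, and you carry out that dualization faithfully, with the roles of hypotheses (2) and (3) correctly swapped between incoming and outgoing differentials for the homological, first-quadrant spectral sequence of Lemma~\ref{lem:0.3}. The index bookkeeping (vanishing of $E^r_{s-r,t+r-1}$ for $r>s$ and of $E^r_{s+r,t-r+1}$ for $r>t-\inf X+1$) matches the stated ranges of the hypotheses, so no gap remains.
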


The \emph{arithmetic
rank} of the ideal $\mathfrak{a}$, denoted by
$\mathrm{ara}(\mathfrak{a})$, is the least number of elements of $R$ required to generate an ideal which has
the same radical as $\mathfrak{a}$, i.e.,
\begin{center}$\mathrm{ara}(\mathfrak{a})=\mathrm{min}\{n\geq0\hspace{0.03cm}|\hspace{0.03cm}\exists\ a_1,\cdots,a_n\in R\ \textrm{with}\ \mathrm{Rad}(a_1,\cdots,a_n)=\mathrm{Rad}(\mathfrak{a})\}$.\end{center}

For each
$R$-module $M$, set
\begin{center}$\mathrm{cd}(\mathfrak{a},M)=\mathrm{sup}\{n\in\mathbb{Z}\hspace{0.03cm}|\hspace{0.03cm}\mathrm{H}_\mathfrak{a}^n(M)\neq0\}$.\end{center}
The \emph{cohomological dimension} of the ideal $\mathfrak{a}$ is
\begin{center}$\mathrm{cd}(\mathfrak{a},R)=\mathrm{sup}\{\mathrm{cd}(\mathfrak{a},M)\hspace{0.03cm}|\hspace{0.03cm}M\ \textrm{is\ an}\ R\textrm{-module}\}$.\end{center}

We now present the first main theorem of this section, which is a nice generalization of \cite[Theorem 1]{EK} and \cite[Corollary 2.4.11]{F}.

\begin{thm}\label{lem:1.1}{\it{Let $\mathfrak{a}$ be a proper ideal of $R$ and $X\in \mathrm{D}_\sqsubset(R)$. If $\mathrm{H}_i(X)$ is $\mathfrak{a}$-cofinite for every $i\in\mathbb{Z}$ then
 $X$ is $\mathfrak{a}$-cofinite. The converse holds when either of the following holds:

 $(1)$ $\mathrm{dim}R/\mathfrak{a}\leq1$.

$(2)$ $\mathrm{ara}(\mathfrak{a})\leq1$.

$(3)$ $\mathrm{dim}R\leq2$.

$(4)$ $\mathrm{dim}_R\mathrm{H}_i(X)\leq1$ for every $i\in\mathbb{Z}$.

 $(5)$ $\mathrm{H}_i(X)$ is in $FD_{\leq 1}$ for every $i\in\mathbb{Z}$.

 $(6)$ $\mathrm{cd}(\mathfrak{a},R)\leq1$ and $X\in\mathrm{D}_\square(R)$.}}
\end{thm}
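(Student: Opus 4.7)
The implication ``each $\HH_i(X)$ is $\fa$-cofinite $\Rightarrow X$ is $\fa$-cofinite'' is unconditional and follows from the first special spectral sequence of Lemma~\ref{lem:0.2},
$$E^2_{p,q}=\ext^{-p}_R(R/\fa,\HH_q(X))\Longrightarrow \ext^{-p-q}_R(R/\fa,X),$$
together with $\supp_R X=\bigcup_i\supp_R \HH_i(X)\subseteq \mathrm{V}(\fa)$. Since $X\in\mathrm{D}_\sqsubset(R)$ the filtration on each abutment $\ext^n_R(R/\fa,X)$ is finite, so $\fa$-cofiniteness of every $\HH_q(X)$ forces each $E^2_{p,q}$ to be finitely generated, and hence so is each abutment.

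For the converse I assume $X$ is $\fa$-cofinite and prove each $\HH_i(X)$ is $\fa$-cofinite by descending induction on $i$ (with $\HH_i(X)=0$ for $i>\sup X$ as the trivial base). The engine in cases (1), (3), (4), (5) is Lemma~\ref{lem:0.6} applied with $\mathfrak{b}=0$ (so $\HH^t_\mathfrak{b}(X)=\HH_{-t}(X)$), $\mathcal{S}$ the Serre subcategory of finitely generated modules, parameter $t=-i$, and $s\in\{0,1\}$: condition~(1) holds because $X$ is $\fa$-cofinite; condition~(3) is vacuous for $s\leq 1$; and condition~(2) reads ``$\ext^{s+1+j}_R(R/\fa,\HH_{i+j}(X))$ is finitely generated for $1\leq j\leq \sup X-i$'', which is exactly the inductive hypothesis. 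The conclusion is that both $\ext^0_R(R/\fa,\HH_i(X))$ and $\ext^1_R(R/\fa,\HH_i(X))$ are finitely generated.

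To upgrade this finite list of Ext-finiteness statements into full $\fa$-cofiniteness of $\HH_i(X)$ I invoke a hypothesis-specific low-degree criterion. In cases (1), (4), (5) each $\HH_i(X)$ lies in $FD_{\leq 1}$ (by support dimension or by direct assumption), so Lemma~\ref{lem:6.6} with $n=1$ finishes the job. In case~(3) the hypothesis $\dim R\leq 2$ supplies an analogous two-Ext criterion of Bahmanpour--Naghipour--Sedghi type that again needs only the $s=0,1$ groups. In cases~(2) and~(6) the bound $\mathrm{cd}(\fa,R)\leq 1$ (which is implied by $\mathrm{ara}(\fa)\leq 1$) forces $\HH^i_\fa(X)=0$ for $i\geq 2$, so the third special spectral sequence of Lemma~\ref{lem:0.2} collapses to two rows; Corollary~\ref{lem:1.8} then reduces the problem to the $\mathrm{cd}(\fa,R)\leq 1$ cofiniteness statement already in the literature (Faridian's Theorem~II when combined with the boundedness $X\in\mathrm{D}_\square(R)$ in~(6), and a truncation argument otherwise).

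The main obstacle I anticipate is the parameter bookkeeping in Lemma~\ref{lem:0.6}: one must keep $s$ small enough to render condition~(3) vacuous, while condition~(2) must still exhaust exactly the Ext groups required by the case-specific cofiniteness criterion. Case~(3) looks subtlest, since $\dim R\leq 2$ does not automatically place $\HH_i(X)$ into $FD_{\leq 1}$, so Lemma~\ref{lem:6.6} must be replaced by the stronger two-Ext criterion and its hypotheses have to be checked in tandem with the inductive framework above.
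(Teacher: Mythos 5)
Your unconditional implication and your treatment of cases (1), (3), (4), (5) are essentially the paper's argument: apply Lemma \ref{lem:0.6} with $\fb=0$ and $s\in\{0,1\}$ (condition (3) vacuous, condition (2) supplied by descending induction on $i$, condition (1) by $\fa$-cofiniteness of $X$) to get $\Hom_R(R/\fa,\HH_i(X))$ and $\ext^1_R(R/\fa,\HH_i(X))$ finitely generated, then invoke the case-specific two-Ext criterion (Lemma \ref{lem:6.6} for (1),(4),(5); for (3) the criterion you anticipate does exist and is the one the paper cites, \cite[Corollary 2.4]{NS}, consistent with the $d=2$ instance of Lemma \ref{lem:3.42'}). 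Your explicit induction is in fact a cleaner writeup of what the paper leaves implicit.

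Cases (2) and (6) contain a genuine gap. The claim that $\cx(\fa,R)\le 1$ --- I mean $\mathrm{cd}(\fa,R)\le 1$ --- forces $\HH^i_\fa(X)=0$ for $i\ge 2$ is false for complexes: $\HH^i_\fa(X)$ is assembled from $\HH^0_\fa(\HH_{-i}(X))$ and $\HH^1_\fa(\HH_{-i+1}(X))$, so it is typically nonzero for every $i$ when $X$ is unbounded below, and even for $X\in\mathrm{D}_\square(R)$ it only vanishes outside a window determined by $\inf X$ and $\sup X$, not above degree $1$. Consequently the ``collapse to two rows'' does not happen, Corollary \ref{lem:1.8} cannot ``reduce the problem'' (it presupposes cofiniteness of all but one of the $\HH^j_\fb(X)$, which is what you are trying to prove), and Theorem II is not applicable since it assumes $R$ is $\fa$-adically complete, which Theorem \ref{lem:1.1} does not. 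The repair for (2) is immediate: $\mathrm{ara}(\fa)\le 1$ admits the same module-level two-Ext criterion (an $\fa$-torsion module is $\fa$-cofinite once $\Hom_R(R/\fa,-)$ and $\ext^1_R(R/\fa,-)$ are finitely generated; \cite[Lemma 2.2.13]{F}), so your Lemma \ref{lem:0.6} argument applies verbatim. For (6) the paper argues on the Tor side instead: since $X\in\mathrm{D}_\square(R)$ is $\fa$-cofinite and $\mathrm{cd}(\fa,R)\le 1$, \cite[Theorem 7.4]{WW} gives that all $\tor^R_i(R/\fa,X)$ are finitely generated; the dual Lemma \ref{lem:0.7} with $\fb=0$ then yields finite generation of $R/\fa\otimes_R\HH_i(X)$ and $\tor^R_1(R/\fa,\HH_i(X))$, and the Tor-criterion for cofiniteness under $\mathrm{cd}(\fa,R)\le1$ (\cite[Corollary 2.2.17]{F}, \cite{GM}) concludes. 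Some such detour is needed because no Ext-based two-term criterion is being invoked for bare $\mathrm{cd}(\fa,R)\le 1$.
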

\begin{proof} The first statement follows from \cite[Lemma 2.4.4]{F}.

If the conditions (1)--(5) hold, then by \cite[Proposition 2.6]{BNS},  \cite[Theorem 2.3]{LM1}, \cite[Lemma 2.2.13]{F}, \cite[Corollary 2.4]{NS} and Lemma \ref{lem:6.6}, one has $\mathrm{Hom}_R(R/\mathfrak{a},\mathrm{H}_i(X))$ and $\mathrm{Ext}^1_R(R/\mathfrak{a},\mathrm{H}_i(X))$ are finitely generated if and only if $\mathrm{H}_i(X)$ are $\mathfrak{a}$-cofinite for all $i$. Thus the second statement follows from Lemma \ref{lem:0.6} by letting $\mathfrak{b}={0}$.
If $\mathrm{cd}(\mathfrak{a},R)\leq1$, then $\mathrm{Tor}_i^R(R/\mathfrak{a},X)$ is finitely generated for all $i$ by \cite[Theorem 7.4]{WW}.
Hence Lemma \ref{lem:0.7} implies that $R/\mathfrak{a}\otimes_R\mathrm{H}_i(X)$ and $\mathrm{Tor}_1^R(R/\mathfrak{a},\mathrm{H}_i(X))$ are finitely generated. Consequently,
$\mathrm{H}_{i}(X)$ is $\mathfrak{a}$-cofinite for $i\in\mathbb{Z}$ by \cite[Corollary 2.2.17]{F} and \cite[Theorem 2.5 and Corollary 3.2]{GM}.
\end{proof}

The following result shows when the $R$-module $\mathrm{Ext}^{s}_R(R/\mathfrak{a},X)$ belongs to $\mathcal{S}$.

\begin{lem}\label{lem:1.3''}{\it{Let $\mathcal{S}$ be a Serre subcategory of $R$-modules and $\mathfrak{a},\mathfrak{b}$ two ideals of $R$ with $\mathfrak{b}\subseteq\mathfrak{a}$, and let $s$ be an integer and $X\in \mathrm{D}_\sqsubset(R)$ such that $\mathrm{Ext}^{s-i}_R(R/\mathfrak{a},\mathrm{H}_\mathfrak{b}^{i}(X))\in\mathcal{S}$ for
all $i\leq s$, then $\mathrm{Ext}^i_R(R/\mathfrak{a},X)\in\mathcal{S}$ for all $i\leq s$.}}
\end{lem}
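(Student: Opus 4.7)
The plan is to apply the third-quadrant spectral sequence of Lemma~\ref{lem:0.2},
\[
E^{2}_{p,q}=\ext^{-p}_{R}(R/\fa,\HH^{-q}_{\fb}(X))\Longrightarrow \ext^{-p-q}_{R}(R/\fa,X),
\]
and to run the same filtration-and-subquotient bookkeeping that drives the proof of Lemma~\ref{lem:0.6}. First I would replace $X$ by $\shift^{-\sup X}X$ to normalize $\sup X=0$, which guarantees that the spectral sequence genuinely sits in the third quadrant and that every abutment is computed by a finite filtration.

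The key translation is that, writing $k=-p$ and $j=-q$, each $E^{2}$-entry on the antidiagonal $p+q=-s$ is precisely $\ext^{k}_{R}(R/\fa,\HH^{j}_{\fb}(X))$ with $k+j=s$, and the hypothesis places every such module in $\mathcal{S}$. Because $\mathcal{S}$ is a Serre subcategory and hence closed under subquotients, every later page $E^{r}_{p,q}$ with $p+q=-s$ stays in $\mathcal{S}$, and so does the limit $E^{\infty}_{p,q}$; no differential analysis is needed for this step.

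The abutment $\ext^{s}_{R}(R/\fa,X)$ then carries a finite filtration
\[
0=U^{-s-1}\subseteq U^{-s}\subseteq\cdots\subseteq U^{0}=\ext^{s}_{R}(R/\fa,X)
\]
whose successive quotients $U^{p}/U^{p-1}\cong E^{\infty}_{p,-s-p}$ for $-s\leq p\leq 0$ all lie in $\mathcal{S}$; closure of $\mathcal{S}$ under extensions then forces $\ext^{s}_{R}(R/\fa,X)\in\mathcal{S}$. Applying the identical argument antidiagonal by antidiagonal for each $i\leq s$ (with the observation that for $i<0$ the abutment already vanishes since $\sup X=0$) delivers the full conclusion $\ext^{i}_{R}(R/\fa,X)\in\mathcal{S}$ for every $i\leq s$.

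The principal organizational hurdle is ensuring that the filtration really is finite and that the spectral sequence is honestly third-quadrant---this is exactly why the normalization $\sup X=0$ (using $X\in\mathrm{D}_\sqsubset(R)$) is essential, together with the standard observation that $R/\fa$ admits a projective resolution concentrated in nonnegative degrees. Once these shape constraints are in place the remainder of the proof is purely formal, since subquotients and extensions keep one inside the Serre subcategory $\mathcal{S}$.
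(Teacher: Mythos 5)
Your argument for the top degree $i=s$ is exactly the paper's: normalize $\mathrm{sup}X=0$, take the spectral sequence $E^2_{p,q}=\mathrm{Ext}^{-p}_R(R/\mathfrak{a},\mathrm{H}^{-q}_\mathfrak{b}(X))\Rightarrow\mathrm{Ext}^{-p-q}_R(R/\mathfrak{a},X)$ of Lemma \ref{lem:0.2}, note that each $E^\infty_{p,-s-p}$ is a subquotient of an $E^2$-term lying in $\mathcal{S}$, and climb the finite filtration; no differential analysis is needed, as you say.

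The gap is the final step, ``applying the identical argument antidiagonal by antidiagonal for each $i\leq s$.'' For $-\mathrm{sup}X\leq i<s$ the relevant entries are $E^2_{p,-i-p}=\mathrm{Ext}^{-p}_R(R/\mathfrak{a},\mathrm{H}^{i+p}_\mathfrak{b}(X))$, whose two indices sum to $i$, not to $s$; the hypothesis as stated only places the modules $\mathrm{Ext}^{k}_R(R/\mathfrak{a},\mathrm{H}^{j}_\mathfrak{b}(X))$ with $k+j=s$ in $\mathcal{S}$, so it says nothing about these lower antidiagonals and the argument does not go through there. Indeed the literal statement fails: take $\mathfrak{b}=0$, $X=M$ a module in degree $0$ and $s=1$; the hypothesis reduces to $\mathrm{Ext}^{1}_R(R/\mathfrak{a},M)\in\mathcal{S}$, which does not force $\mathrm{Hom}_R(R/\mathfrak{a},M)\in\mathcal{S}$ (e.g.\ $R=k[[x,y]]$, $\mathfrak{a}=(x)$, $M=E_R(k)$ the injective hull of the residue field, $\mathcal{S}$ the finitely generated modules: the first Ext vanishes by injectivity while $\mathrm{Hom}_R(R/(x),E_R(k))\cong E_{R/(x)}(k)$ is not finitely generated). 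To be fair, the paper's own proof makes the same silent leap (``it follows that $E^\infty_{p,-i-p}\in\mathcal{S}$ for $i+p\leq s$''); the lemma is evidently intended with the hypothesis $\mathrm{Ext}^{k}_R(R/\mathfrak{a},\mathrm{H}^{j}_\mathfrak{b}(X))\in\mathcal{S}$ for all $k+j\leq s$, and under that reading your antidiagonal-by-antidiagonal argument is complete and coincides with the paper's. You should say explicitly that you are consuming membership in $\mathcal{S}$ on every antidiagonal $k+j=i$ with $i\leq s$, since that is precisely where the hypothesis (correctly strengthened) is used.
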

\begin{proof} We may assume that $i,s\geq-\mathrm{sup}X$. Consider the first spectral sequence in Lemma \ref{lem:0.2}. For
 $-\mathrm{sup}X\leq i\leq s$, there exists a finite filtration
\begin{center}$
 0=U^{-i-\mathrm{sup}X-1}\subseteq U^{-i-\mathrm{sup}X}\subseteq\cdots\subseteq U^0=\mathrm{Ext}^{i}_R(R/\mathfrak{a},X)$,
\end{center}such that $U^{p}/U^{p-1}\cong E^\infty_{p,-i-p}$ for $-i\leq p+\mathrm{sup}X$. Since $E^\infty_{p,-i-p}$ is a subquotient of $E^2_{p,-i-p}$, it follows that $E^\infty_{p,-i-p}\in\mathcal{S}$ for $i+p\leq s$. A successive use of the exact sequence
\begin{center}$
 0\rightarrow U^{p-1}\rightarrow U^{p}\rightarrow U^{p}/U^{p-1}\rightarrow0$,
\end{center} implies that $\mathrm{Ext}^i_R(R/\mathfrak{a},X)\in\mathcal{S}$ for
all $i\leq s$.
\end{proof}

The next lemma
gives some conditions to examine $\mathfrak{a}$-cofiniteness of complexes in $\mathrm{D}_\sqsupset(R)$.

\begin{lem}\label{lem:1.3}{\it{Let $\mathfrak{a}$ be a proper ideal of $R$ with $\mathrm{pd}_RR/\mathfrak{a}\leq1$ and $\mathrm{cd}(\mathfrak{a},R)\leq1$ $($e.g. $R$ is a hereditary ring$)$. For an $R$-complex $X\in\mathrm{D}_\sqsupset(R)$, the following statements are equivalent:

 $(1)$ $X$ is $\mathfrak{a}$-cofinite;

 $(2)$ $\mathrm{Supp}_RX\subseteq\mathrm{V}(\mathfrak{a})$ and $\mathrm{Tor}^R_j(R/\mathfrak{a},X)$ are finitely generated for all $j$;

 $(3)$ $\mathrm{H}_i(X)$ is $\mathfrak{a}$-cofinite for every $i\in\mathbb{Z}$.}}
\end{lem}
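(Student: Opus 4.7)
The plan is to establish $(1)\Leftrightarrow(3)$ and $(2)\Leftrightarrow(3)$ separately, in each case reducing questions about the complex $X$ to its homology modules via a two-step filtration produced from the two available finiteness hypotheses. Each of the three conditions incorporates $\mathrm{Supp}_RX\subseteq\mathrm{V}(\mathfrak{a})$ (in (1) and (3) by definition of $\mathfrak{a}$-cofiniteness together with $\mathrm{Supp}_RX=\bigcup_i\mathrm{Supp}_R\mathrm{H}_i(X)$, in (2) by hypothesis), so throughout I focus on the finite generation of $\mathrm{Ext}$ and $\mathrm{Tor}$.

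For $(1)\Leftrightarrow(3)$, the hypothesis $\mathrm{pd}_RR/\mathfrak{a}\leq 1$ supplies a finite free resolution $0\to R^m\to R^n\to R/\mathfrak{a}\to 0$. Applying $\mathrm{RHom}_R(-,X)$ and using $\mathrm{Ext}^i_R(R^k,X)\cong\mathrm{H}_{-i}(X)^k$, the associated long exact sequence collapses (upon identifying the kernel and cokernel of the induced map $\mathrm{H}_{-i}(X)^n\to\mathrm{H}_{-i}(X)^m$ with $\mathrm{Hom}_R(R/\mathfrak{a},\mathrm{H}_{-i}(X))$ and $\mathrm{Ext}^1_R(R/\mathfrak{a},\mathrm{H}_{-i}(X))$, respectively) to the short exact sequence
\begin{equation*}
0\to\mathrm{Ext}^1_R(R/\mathfrak{a},\mathrm{H}_{1-i}(X))\to\mathrm{Ext}^i_R(R/\mathfrak{a},X)\to\mathrm{Hom}_R(R/\mathfrak{a},\mathrm{H}_{-i}(X))\to 0
\end{equation*}
for every $i\in\mathbb{Z}$. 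Direction $(3)\Rightarrow(1)$ is immediate: cofiniteness of the $\mathrm{H}_j(X)$ makes the two outer terms finitely generated, hence so is the middle. Direction $(1)\Rightarrow(3)$: finite generation of the middle for every $i$ forces both outer terms to be finitely generated, and since $\mathrm{pd}_RR/\mathfrak{a}\leq 1$ kills $\mathrm{Ext}^j$ for $j\geq 2$, every $\mathrm{Ext}^j_R(R/\mathfrak{a},\mathrm{H}_i(X))$ is finitely generated, giving cofiniteness of each $\mathrm{H}_i(X)$.

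For $(2)\Leftrightarrow(3)$, I apply Lemma \ref{lem:0.3} with $\mathfrak{b}=0$, obtaining the spectral sequence $E^2_{p,q}=\mathrm{Tor}^R_p(R/\mathfrak{a},\mathrm{H}_q(X))\Rightarrow\mathrm{Tor}^R_{p+q}(R/\mathfrak{a},X)$. Since $\mathrm{pd}_RR/\mathfrak{a}\leq 1$, one has $E^2_{p,q}=0$ for $p\geq 2$; the sequence is supported in the two columns $p=0,1$, every $d_r$ with $r\geq 2$ vanishes, and the resulting two-step filtration yields
\begin{equation*}
0\to\mathrm{H}_n(X)/\mathfrak{a}\mathrm{H}_n(X)\to\mathrm{Tor}^R_n(R/\mathfrak{a},X)\to\mathrm{Tor}^R_1(R/\mathfrak{a},\mathrm{H}_{n-1}(X))\to 0
\end{equation*}
for each $n$. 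In $(3)\Rightarrow(2)$, cofiniteness of each $\mathrm{H}_i(X)$ combined with $\mathrm{cd}(\mathfrak{a},R)\leq 1$ makes both outer terms finitely generated by \cite[Corollary 2.2.17]{F} and \cite[Theorem 2.5 and Corollary 3.2]{GM} (the implication used already in the proof of Theorem \ref{lem:1.1}(6)), hence so is the middle. In $(2)\Rightarrow(3)$, finite generation of the middle transfers to both outer terms, so $\mathrm{Tor}^R_j(R/\mathfrak{a},\mathrm{H}_i(X))$ is finitely generated for $j=0,1$ and vanishes for $j\geq 2$; together with $\mathrm{Supp}_R\mathrm{H}_i(X)\subseteq\mathrm{V}(\mathfrak{a})$ and $\mathrm{cd}(\mathfrak{a},R)\leq 1$, the same references conclude that each $\mathrm{H}_i(X)$ is $\mathfrak{a}$-cofinite.

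The main obstacle is that $X$ is only bounded below, so the $\mathrm{Ext}$ spectral sequence of Lemma \ref{lem:0.2} (which requires $X\in\mathrm{D}_\sqsubset(R)$) is unavailable. The hypothesis $\mathrm{pd}_RR/\mathfrak{a}\leq 1$ is exactly what lets me bypass this by using the two-term free resolution of $R/\mathfrak{a}$ directly, so that no $\mathrm{Ext}$-spectral-sequence convergence is needed. On the $\mathrm{Tor}$ side, Lemma \ref{lem:0.3} already covers $\mathrm{D}_\sqsupset(R)$, and $\mathrm{pd}_RR/\mathfrak{a}\leq 1$ again forces the two-column collapse that makes the argument work without any boundedness-above hypothesis on $X$.
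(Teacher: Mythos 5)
Your proof is correct and follows essentially the same route as the paper's: both arguments rest on the length-one resolution of $R/\mathfrak{a}$, which produces the two-term short exact sequence for $\mathrm{Ext}^i_R(R/\mathfrak{a},X)$ (the paper cites Rotman's Theorem 10.85 where you derive it by hand from the resolution) and the two-column degenerate Tor spectral sequence of Lemma \ref{lem:0.3}, together with the same Ext--Tor equivalence for $\mathfrak{a}$-torsion modules from \cite{F} and \cite{GM}. The only differences are organizational --- you prove $(1)\Leftrightarrow(3)$ and $(2)\Leftrightarrow(3)$ directly and note that $\mathrm{pd}_RR/\mathfrak{a}\leq1$ kills $\mathrm{Ext}^{\geq2}$, whereas the paper runs the cycle $(1)\Rightarrow(2)\Rightarrow(3)\Rightarrow(1)$ --- plus the harmless point that the first term of your resolution is only a finitely generated projective, not necessarily free.
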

\begin{proof} (1) $\Rightarrow$ (2) Let $P\stackrel{\simeq}\rightarrow R/\mathfrak{a}$ be a projective resolution. By \cite[Theorem 10.85]{R}, for any $i$, there exists a short exact sequence\begin{center}
$0\rightarrow\mathrm{Ext}^1_R(\mathrm{H}_0(P),\mathrm{H}_{i-1}(X))
\rightarrow\mathrm{H}^{1}(\mathrm{Hom}_R(P,\Sigma^{-i+1}X))\rightarrow\mathrm{Hom}_R(\mathrm{H}_0(P),\mathrm{H}_{i}(X))\rightarrow0$.\end{center}
Since $\mathrm{H}^{n}(\mathrm{Hom}_R(P,X))$ are finitely generated for all $n$, it follows from the above sequence that $\mathrm{Hom}_R(R/\mathfrak{a},\mathrm{H}_{i}(X))$ and $\mathrm{Ext}^1_R(R/\mathfrak{a},\mathrm{H}_{i}(X))$ are finitely generated. Thus $\mathrm{Tor}_j^R(R/\mathfrak{a},\mathrm{H}_i(X))$ are finitely generated for all $i,j$ by
\cite[Corollary 2.2.17]{F} and \cite[Theorem 2.5 and Corollary 3.2]{GM}. For $j\geq\mathrm{inf}X$, consider the second spectral sequence in Lemma \ref{lem:0.3}. There exists a finite filtration\begin{center}
$0=U^{-1}\subseteq U^{0}\subseteq\cdots \subseteq U^{j-\mathrm{inf}X}=\mathrm{Tor}_{j}^R(R/\mathfrak{a},X)$,
\end{center}such that $U^{p}/U^{p-1}\cong E^\infty_{p,j-p}$ for $p\leq j-\mathrm{inf}X$. Since $E^\infty_{p,j-p}$
is a subquotient of $E^2_{p,j-p}$, $E^\infty_{p,j-p}$ is finitely generated for $0\leq p\leq j$. A successive use of the short exact sequence
\begin{center}$0\rightarrow U^{p-1}\rightarrow U^p\rightarrow U^p
/U^{p-1}\rightarrow0$\end{center}
 implies that
$\mathrm{Tor}_j^R(R/\mathfrak{a},X)$ is finitely generated.

(2) $\Rightarrow$ (3) This follows from Lemma \ref{lem:0.7} as $\mathrm{H}_i(X)$ is $\mathfrak{a}$-cofinite if and only if $R/\mathfrak{a}\otimes_R\mathrm{H}_i(X)$ and $\mathrm{Tor}_1^R(R/\mathfrak{a},\mathrm{H}_i(X))$ are finitely generated for every $i\in\mathbb{Z}$ by \cite[Corollary 2.2.17]{F} and \cite[Theorem 2.5 and Corollary 3.2]{GM}.

(3) $\Rightarrow$ (1) Let $P\stackrel{\simeq}\rightarrow R/\mathfrak{a}$ be a projective resolution. By \cite[Theorem 10.85]{R}, for any $n$, there exists a short exact sequence\begin{center}
$0\rightarrow\mathrm{Ext}^1_R(\mathrm{H}_{-n+1}(\Sigma^{-n+1}P),\mathrm{H}_{n-1}(X))
\rightarrow\mathrm{H}^{1}(\mathrm{Hom}_R(\Sigma^{-n+1}P,X))\rightarrow\mathrm{Hom}_R(\mathrm{H}_{-n+1}(\Sigma^{-n+1}P),\mathrm{H}_{n}(X))
\rightarrow0$.\end{center}
As $\mathrm{Ext}^1_R(R/\mathfrak{a},\mathrm{H}_{i}(X)),\mathrm{Hom}_R(R/\mathfrak{a},\mathrm{H}_{i}(X))$ are finitely generated for all $i$, it follows from the above sequence that $\mathrm{Ext}^n_R(R/\mathfrak{a},X)$ is finitely generated, and hence $X$ is $\mathfrak{a}$-cofinite.
\end{proof}

A local ring is \emph{Cohen-Macaulay} if some system of parameters is a regular sequence. A ring $R$ is \emph{Cohen-Macaulay} if $R_\mathfrak{m}$ is Cohen-Macaulay for every maximal ideal $\mathfrak{m}$ of $R$. A nonzero finitely generated $R$-module
$M$ is \emph{perfect} if $\mathrm{pd}_RM=\mathrm{min}\{i\hspace{0.03cm}|\hspace{0.03cm}\mathrm{Ext}^i_R(M,R)\neq0\}$. An ideal $\mathfrak{a}$ is called \emph{perfect} if $R/\mathfrak{a}$
is a perfect module. The next theorem is the second main result of this section, which
answers Question 3 in introduction completely.

\begin{thm}\label{lem:1.4}{\it{Let $\mathfrak{a}$ be a proper ideal of $R$. Suppose that either of the next conditions holds:

$(1)$ $\mathrm{pd}_RR/\mathfrak{a}\leq1$ and $\mathrm{ara}(\mathfrak{a})\leq1$.

$(2)$ $\mathfrak{a}$ is a principal ideal generated by a nonzerodivisor.

$(3)$ $R$ is a Cohen-Macaulay ring and $\mathfrak{a}$ a perfect ideal with $\mathrm{dim}R/\mathfrak{a}\leq1$ and $\mathrm{pd}_RR/\mathfrak{a}<\infty$.\\
Then an $R$-complex $X\in \mathrm{D}(R)$ is $\mathfrak{a}$-cofinite if and only if every $\mathrm{H}_i(X)$ is $\mathfrak{a}$-cofinite.}}
\end{thm}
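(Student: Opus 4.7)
The ``if'' direction (each $H_i(X)$ $\mathfrak{a}$-cofinite $\Rightarrow X$ $\mathfrak{a}$-cofinite) is uniform across the three cases because in each one $\mathrm{pd}_R R/\mathfrak{a}=:p<\infty$: in (2) the sequence $0\to R\xrightarrow{x}R\to R/(x)\to 0$ gives $p\leq 1$, and (1), (3) hypothesize it. Take a bounded projective resolution $P_\bullet\xrightarrow{\simeq}R/\mathfrak{a}$; then $\mathrm{Hom}_R(P_\bullet,X)$ computes $\mathrm{RHom}_R(R/\mathfrak{a},X)$ for every $X\in\mathrm{D}(R)$, and the bicomplex argument of Lemma \ref{lem:0.2} yields the spectral sequence
\begin{equation*}
E^2_{p,q}=\mathrm{Ext}^{-p}_R(R/\mathfrak{a},H_q(X))\Longrightarrow\mathrm{Ext}^{-p-q}_R(R/\mathfrak{a},X),
\end{equation*}
which converges even without a boundedness hypothesis on $X$: since $P_\bullet$ is bounded, each total degree receives contributions only from $-p\in[0,p]$, so the filtration is finite. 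Finite generation of every $E^2$-term propagates to $\mathrm{Ext}^n_R(R/\mathfrak{a},X)$ for each $n$, and together with $\mathrm{Supp}_R X\subseteq\mathrm{V}(\mathfrak{a})$ this yields $X$ $\mathfrak{a}$-cofinite.

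For the converse in cases (1) and (2), $p\leq 1$ forces the spectral sequence to two columns, so it degenerates and, for every $n$, gives a short exact sequence of the type used in Lemma \ref{lem:1.3}'s proof expressing $\mathrm{Ext}^n_R(R/\mathfrak{a},X)$ as an extension of $\mathrm{Hom}_R(R/\mathfrak{a},H_i(X))$ by $\mathrm{Ext}^1_R(R/\mathfrak{a},H_{i-1}(X))$ for consecutive indices. Noetherianity transfers finite generation from the middle term to both subquotients, making $\mathrm{Hom}_R(R/\mathfrak{a},H_i(X))$ and $\mathrm{Ext}^1_R(R/\mathfrak{a},H_i(X))$ finitely generated for every $i$. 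Because $\mathrm{ara}(\mathfrak{a})\leq 1$, the Bahmanpour--Naghipour--Sedghi / Melkersson criterion invoked in the proof of Theorem \ref{lem:1.1} then upgrades this to $\mathfrak{a}$-cofiniteness of each $H_i(X)$.

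For the converse in case (3), the hypothesis $\dim R/\mathfrak{a}\leq 1$ places every $H_i(X)$ in $FD_{\leq 1}$, so by Lemma \ref{lem:6.6} it suffices to verify finite generation of $\mathrm{Hom}_R(R/\mathfrak{a},H_i(X))$ and $\mathrm{Ext}^1_R(R/\mathfrak{a},H_i(X))$. I would apply Lemma \ref{lem:0.6} with $\mathfrak{b}=0$, $\mathcal{S}$ the Serre subcategory of finitely generated $R$-modules and $s\in\{0,1\}$ (where hypothesis (3) of that lemma is vacuous) to the soft truncation $\tau_{\leq N}X\in\mathrm{D}_\sqsubset(R)$. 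Reading off the vanishing $\mathrm{Ext}^m_R(R/\mathfrak{a},\tau_{>N}X)=0$ for $m\geq p-N$ from the spectral sequence (applied to $\tau_{>N}X$, whose nonzero homology lives in degrees $>N$) identifies $\mathrm{Ext}^m_R(R/\mathfrak{a},\tau_{\leq N}X)$ with $\mathrm{Ext}^m_R(R/\mathfrak{a},X)$ in the range required, supplying hypothesis (1) of Lemma \ref{lem:0.6} from the cofiniteness of $X$. Hypothesis (2) is then propagated by a descending induction on $s$ with trivial base case $s>p$, where $\mathrm{Ext}^s_R(R/\mathfrak{a},-)=0$.

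The main obstacle is the bookkeeping in case (3): hypothesis (2) of Lemma \ref{lem:0.6} couples $\mathrm{Ext}^s_R(R/\mathfrak{a},H_j(X))$ to Ext-terms at $(s+1+k,j+k)$ for all $k\geq 1$, and the truncation parameter $N$ must be tuned to each $(s,j)$ so that the Ext-identification with $X$ remains valid throughout the induction. Cases (1) and (2) sidestep this entirely because the two-column spectral sequence collapses to a clean SES.
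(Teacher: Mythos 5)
Your ``if'' direction and your treatment of cases (1) and (2) are sound and essentially the paper's argument in different packaging: the bounded projective resolution $P$ of $R/\mathfrak{a}$ gives a convergent spectral sequence with finitely many columns (the paper instead splits $X$ by the triangle $X_{>n}\to X\to X_{\leq n}$ and quotes Theorem \ref{lem:1.1} and Lemma \ref{lem:1.3}), and for $\mathrm{pd}_RR/\mathfrak{a}\leq1$ the two-column degeneration is exactly the short exact sequence from Rotman's Theorem 10.85 that the paper uses, after which $\mathrm{ara}(\mathfrak{a})\leq1$ closes the argument.

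The converse in case (3) has a genuine gap. Your plan is to feed Lemma \ref{lem:0.6} (with $s\in\{0,1\}$, $\mathfrak{b}=0$) applied to $\tau_{\leq N}X$, but hypothesis (2) of that lemma at homology degree $j$ demands finite generation of $\mathrm{Ext}^{s+1+k}_R(R/\mathfrak{a},\mathrm{H}_{j+k}(X))$ for $k\geq1$, i.e.\ of higher Ext modules of \emph{higher} homology modules; those in turn are only obtainable (via Lemma \ref{lem:6.6}) from the $\mathrm{Ext}^{0}$ and $\mathrm{Ext}^{1}$ of $\mathrm{H}_{j+1},\dots,\mathrm{H}_{j+p-1}$ (where $p=\mathrm{pd}_RR/\mathfrak{a}$), and so on upward without end. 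Your descending induction ``on $s$ with base case $s>p$'' does not break this circle: for $2\leq s\leq p$ hypothesis (3) of Lemma \ref{lem:0.6} requires the \emph{lower} Ext modules $\mathrm{Ext}^{0},\dots,\mathrm{Ext}^{s-2}$ of other homology modules, which a descending induction on $s$ has not yet produced; and a descending induction on the homology degree $j$ has no base case because $X\in\mathrm{D}(R)$ is unbounded above. The truncation does not help: precisely in the top window of width $p$ of $\tau_{\leq N}X$ the identification $\mathrm{Ext}^{s-j}_R(R/\mathfrak{a},\tau_{\leq N}X)\cong\mathrm{Ext}^{s-j}_R(R/\mathfrak{a},X)$ fails (the connecting terms from $\tau_{>N}X$ need not vanish there), so hypothesis (1) of Lemma \ref{lem:0.6} is unavailable exactly where the induction would have to start; enlarging $N$ only relocates the window. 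A telling symptom is that your argument never uses the Cohen--Macaulay and perfectness hypotheses. The paper's proof of this direction is structurally different and unavoidable here: it takes a \emph{minimal} semi-injective resolution $I$ of $X$, and for $\dim R/\mathfrak{a}=0$ uses a composition series of $R/\mathfrak{a}$ by copies of $R/\mathfrak{m}$ to show $\mathrm{Hom}_R(R/\mathfrak{a},I)$ has zero differentials, so that $\mathrm{Ext}^{-i}_R(R/\mathfrak{a},X)\cong\mathrm{Hom}_R(R/\mathfrak{a},I_i)$ is read off degree by degree with no induction at all; for $\dim R/\mathfrak{a}=1$ it uses CM plus perfectness to get unmixedness of $R/\mathfrak{a}$, chooses an $R/\mathfrak{a}$-regular element $x$ with $\dim R/(\mathfrak{a}+xR)=0$, and reduces to the zero-dimensional case. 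You would need to import that mechanism (or an equivalent one) to complete case (3).
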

\begin{proof} Assume that (1) and (2) hold. For any $n$, set $X_{>n}:\cdots\rightarrow X_{n+1}\rightarrow\mathrm{Im}d_{n+1}\rightarrow0$ and $X_{\leq n}:0\rightarrow X_n/\mathrm{Im}d_{n+1}\rightarrow X_{n-1}\rightarrow\cdots$. We have a short exact sequence \begin{center}$0\rightarrow X_{>n}\rightarrow X\rightarrow X_{\leq n}\rightarrow0$\end{center}with $X_{>n}\in\mathrm{D}_\sqsupset(R)$ and $X_{\leq n}\in\mathrm{D}_\sqsubset(R)$.
If $\mathrm{H}_i(X)$ is $\mathfrak{a}$-cofinite for each $i\in\mathbb{Z}$, then $X_{\leq n}$ is $\mathfrak{a}$-cofinite by Theorem \ref{lem:1.1} and $X_{>n}$ is $\mathfrak{a}$-cofinite by Lemmas \ref{lem:1.3} as $\mathrm{cd}(\mathfrak{a},R)\leq\mathrm{ara}(\mathfrak{a})$. Conversely, let $X$
be $\mathfrak{a}$-cofinite and $P\stackrel{\simeq}\rightarrow R/\mathfrak{a}$ a projective resolution. For any $i$, \cite[Theorem 10.85]{R} yields a short exact sequence\begin{center}
$0\rightarrow\mathrm{Ext}^1_R(\mathrm{H}_0(P),\mathrm{H}_{i-1}(X))
\rightarrow\mathrm{H}^{1}(\mathrm{Hom}_R(P,\Sigma^{-i+1}X))\rightarrow\mathrm{Hom}_R(\mathrm{H}_0(P),\mathrm{H}_{i}(X))\rightarrow0$.\end{center}
Since $\mathrm{H}^{n}(\mathrm{Hom}_R(P,X))$ are finitely generated for all $n$, it follows from the above sequence that $\mathrm{Hom}_R(R/\mathfrak{a},\mathrm{H}_{i}(X))$ and $\mathrm{Ext}^1_R(R/\mathfrak{a},\mathrm{H}_{i}(X))$ are finitely generated. Hence \cite[Corollary 2.2.13]{F} implies that $\mathrm{H}_i(X)$ is $\mathfrak{a}$-cofinite for every $i\in\mathbb{Z}$.

Assume that (3) holds. Let $\mathrm{H}_i(X)$ is $\mathfrak{a}$-cofinite for every $i\in\mathbb{Z}$. Apply the functor $\mathrm{RHom}_R(R/\mathfrak{a},-)$ to the above exact sequence to get an exact triangle
\begin{center}$\mathrm{RHom}_R(R/\mathfrak{a},X_{> n})\rightarrow\mathrm{RHom}_R(R/\mathfrak{a},X)\rightarrow
\mathrm{RHom}_R(R/\mathfrak{a},X_{\leq n})\rightsquigarrow$.\end{center}For any $i\in\mathbb{Z}$, choose $n=i+\mathrm{pd}_RR/\mathfrak{a}$. From the long exact cohomology sequence, we have
\begin{center}$0=\mathrm{H}_{i}(\mathrm{RHom}_R(R/\mathfrak{a},X_{> n}))\rightarrow\mathrm{H}_{i}(\mathrm{RHom}_R(R/\mathfrak{a},X))\rightarrow
\mathrm{H}_{i}(\mathrm{RHom}_R(R/\mathfrak{a},X_{\leq n}))\rightarrow\mathrm{H}_{i-1}(\mathrm{RHom}_R(R/\mathfrak{a},X_{> n}))=0$.\end{center}By Theorem \ref{lem:1.1}, one has $X_{\leq n}$ is $\mathfrak{a}$-cofinite, so $\mathrm{H}_{i}(\mathrm{RHom}_R(R/\mathfrak{a},X))\cong\mathrm{H}_{i}(\mathrm{RHom}_R(R/\mathfrak{a},X_{\leq n}))$ are finitely generated for all $i$. Thus $X$ is $\mathfrak{a}$-cofinite. Conversely,
let $X$ be $\mathfrak{a}$-cofinite and $X\stackrel{\simeq}\rightarrow I$ a minimal semi-injective resolution. If $\mathrm{dim}R/\mathfrak{a}=0$, then there is a finite filtration
$0=N_{0}\subseteq N_{1}\subseteq \cdots\subseteq N_{r}=R/\mathfrak{a}$
such that $N_{j}/N_{j-1}\cong R/\mathfrak{m}$ for $1\leq j\leq r$. The exact sequence $0\rightarrow R/\mathfrak{m}\rightarrow N_2\rightarrow R/\mathfrak{m}\rightarrow 0$ yields a commutative diagram
\begin{center}$\xymatrix@C=20pt@R=20pt{
  & \vdots\ar[d] & \vdots\ar[d] & \vdots\ar[d] & \\
0\ar[r]&\mathrm{Hom}_{R}(R/\mathfrak{m},I_{i})\ar[r]\ar[d]&\mathrm{Hom}_{R}(N_{2},I_{i})\ar[d]\ar[r]&\mathrm{Hom}_{R}(R/\mathfrak{m},I_{i})\ar[d]\ar[r]&0\\
0\ar[r]&\mathrm{Hom}_{R}(R/\mathfrak{m},I_{i-1})\ar[r]\ar[d]&\mathrm{Hom}_{R}(N_{2},I_{i-1})\ar[r]\ar[d]&\mathrm{Hom}_{R}(R/\mathfrak{m},I_{i-1})\ar[r]\ar[d]&0.\\
  & \vdots & \vdots & \vdots & }$
\end{center}As $\mathrm{Hom}_{R}(R/\mathfrak{m},I)$ has zero differentials, it follows that
$\mathrm{Hom}_R(N_{2},I)$ has zero differentials. By repeating this process, we see that $\mathrm{Hom}_R(R/\mathfrak{a},I)$ has zero differentials. Therefore,
$\mathrm{H}_i(\mathrm{RHom}_R(R/\mathfrak{a},X))\cong\mathrm{H}_i(\mathrm{Hom}_R(R/\mathfrak{a},I))
\cong\mathrm{Hom}_R(R/\mathfrak{a},I_i)$ is finitely generated. Hence
$I_i$ is artinian and $\mathfrak{a}$-cofinite by \cite[Proposition 4.1]{LM}, and then $\mathrm{H}_i(X)$ is $\mathfrak{a}$-cofinite for every $i\in\mathbb{Z}$.
Now let $\mathrm{dim}R/\mathfrak{a}=1$. Then \cite[Theorems 2.1.2(a) and 2.1.5(a)]{BH} imply that $\mathrm{Ass}_{R}R/\mathfrak{a}=\{\mathfrak{p}_1,\cdots,\mathfrak{p}_s\}$, where $\mathfrak{p}_j\in\mathrm{V}(\mathfrak{a})$ with $\mathrm{dim}R/\mathfrak{p}_j=1$ for $j=1,\cdots,s$.
By Prime Avoidance Theorem, there exists $x\in \mathfrak{m}\backslash(\mathfrak{p}_{1}\cup\cdots\cup\mathfrak{p}_{s})$ that is
 $R/\mathfrak{a}$-regular and $\mathrm{dim}R/(\mathfrak{a}+xR)=0$. Consider the short exact sequence
\begin{center}$\xymatrix@C=20pt@R=20pt{
0\ar[r] & R/\mathfrak{a}\ar[r]^{x} & R/\mathfrak{a}\ar[r] & R/(\mathfrak{a}+xR)\ar[r] &0, }$
\end{center}
which induces the following exact sequence
\begin{center}
$\cdots\rightarrow\mathrm{Ext}^{i-1}_R(R/\mathfrak{a},X)\rightarrow\mathrm{Ext}^i_R(R/(\mathfrak{a}+xR),X)
\rightarrow\mathrm{Ext}^i_R(R/\mathfrak{a},X)\stackrel{x}\rightarrow \mathrm{Ext}^{i}_R(R/\mathfrak{a},X)\rightarrow\cdots$.
\end{center}
Then $\mathrm{RHom}_{R}(R/(\mathfrak{a}+xR),\mathrm{RHom}_{R}(R/xR,X))\simeq
\mathrm{RHom}_{R}(R/(\mathfrak{a}+xR),X)\in\mathrm{D}^\mathrm{f}(R)$.
Consider the projective resolution $P:0\rightarrow R\stackrel{x}\rightarrow R\rightarrow0$ of $R/xR$. For any $i$,
\cite[Theorem 10.85]{R} yields a short exact sequence\begin{center}
$0\rightarrow\mathrm{Ext}^1_R(R/xR,\mathrm{H}_{i-1}(X))
\rightarrow\mathrm{H}^{1}(\mathrm{Hom}_R(P,\Sigma^{-i+1}X))\rightarrow\mathrm{Hom}_R(R/xR,\mathrm{H}_{i}(X))\rightarrow0$.\end{center}
Since each $\mathrm{H}_{n}(\mathrm{Hom}_R(P,X))$ is artinian $\mathfrak{a}+xR$-cofinite by the preceding proof, it follows that
$\mathrm{Hom}_R(R/xR,\mathrm{H}_{i}(X))$ and $\mathrm{Ext}^1_R(R/xR,\mathrm{H}_{i}(X))$ is artinian $\mathfrak{a}+xR$-cofinite, so $\mathrm{Ext}^j_{R/xR}(R/\mathfrak{a}+xR,\mathrm{Hom}_R(R/xR,\mathrm{H}_{i}(X)))\cong\mathrm{Ext}^j_R(R/\mathfrak{a}+xR,\mathrm{H}_{i}(X))$ are finitely generated for all $j\geq0$ and $i\in\mathbb{Z}$. As $xR\cong R$ is projective and $\mathrm{Supp}_RxR/xR\cap\mathfrak{a}\subseteq\{\mathfrak{m}\}$, $\mathrm{Ext}^j_R(R/\mathfrak{a},\mathrm{Hom}_R(xR,\mathrm{H}_{i}(X)))\cong
\mathrm{Ext}^j_R(xR/xR\cap\mathfrak{a},\mathrm{H}_{i}(X))$ are finitely generated for all $j\geq0$ and $i\in\mathbb{Z}$.
Hence the following exact sequence \begin{center}
$0\rightarrow\mathrm{Hom}_R(R/xR,\mathrm{H}_{i}(X))
\rightarrow\mathrm{H}_{i}(X)\rightarrow\mathrm{Hom}_R(xR,\mathrm{H}_{i}(X))\rightarrow\mathrm{Ext}^1_R(R/xR,\mathrm{H}_{i}(X))\rightarrow0$\end{center} implies that $\mathrm{Ext}^j_R(R/\mathfrak{a},\mathrm{H}_{i}(X))$ is finitely generated for $j\geq0$. Therefore, $\mathrm{H}_i(X)$ is $\mathfrak{a}$-cofinite for every $i\in\mathbb{Z}$.
\end{proof}

\begin{cor}\label{lem:1.4'}{\it{Let $(R,\mathfrak{m})$ be a regular local ring and $\mathfrak{a}$ a proper ideal of $R$ such that either $\mathrm{dim}R\leq2$ and $\mathfrak{a}$ is perfect, or $\mathfrak{a}$ is perfect with $\mathrm{dim}R/\mathfrak{a}\leq1$, or $\mathrm{ara}(\mathfrak{a})\leq1$, then an $R$-complex $X$ is $\mathfrak{a}$-cofinite if and only if $\mathrm{H}_i(X)$ is $\mathfrak{a}$-cofinite for every $i\in\mathbb{Z}$.}}
\end{cor}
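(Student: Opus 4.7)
The plan is to deduce this corollary from Theorem \ref{lem:1.4} by verifying, in a regular local ring, that each of the three listed hypotheses matches one of the three cases (1)--(3) of that theorem. I would handle them in the order (b), (a), (c), since (a) will reduce to (b).

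Case (b) is the cleanest: $R$ is regular local, hence Cohen-Macaulay and of finite global dimension equal to $\mathrm{dim}R$. Therefore $\mathrm{pd}_RR/\mathfrak{a}<\infty$, and together with $\mathfrak{a}$ perfect and $\mathrm{dim}R/\mathfrak{a}\leq 1$ this fulfills precisely the hypotheses of Theorem \ref{lem:1.4}(3).

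For case (a) the plan is to bound $\mathrm{dim}R/\mathfrak{a}$ and thereby reduce to case (b). If $\mathfrak{a}=0$ the claim is the trivial equivalence $X\in\mathrm{D}^\mathrm{f}(R)\Leftrightarrow\mathrm{H}_i(X)$ is finitely generated for every $i$. Otherwise, since $R$ is a domain we have $\mathrm{grade}_R\mathfrak{a}\geq 1$, and perfectness gives $\mathrm{pd}_RR/\mathfrak{a}=\mathrm{grade}_R\mathfrak{a}\geq 1$. Furthermore, $\mathfrak{a}$ perfect makes $R/\mathfrak{a}$ Cohen-Macaulay, so the Auslander-Buchsbaum formula in the Cohen-Macaulay ring $R$ yields
$$\mathrm{dim}R/\mathfrak{a}=\mathrm{depth}R/\mathfrak{a}=\mathrm{depth}R-\mathrm{pd}_RR/\mathfrak{a}\leq 2-1=1,$$
which puts us in case (b).

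For case (c) I would reduce to Theorem \ref{lem:1.4}(2) by replacing $\mathfrak{a}$ with a principal ideal having the same radical. If $\mathrm{ara}(\mathfrak{a})=0$ then $\mathfrak{a}$ is nilpotent, so in the domain $R$ we must have $\mathfrak{a}=0$ and there is nothing to prove. If $\mathrm{ara}(\mathfrak{a})=1$ then $\sqrt{\mathfrak{a}}=\sqrt{(a)}$ for some nonzero $a\in R$, and this $a$ is a nonzerodivisor. The remaining task is to observe that $\mathfrak{a}$-cofiniteness for complexes depends only on the radical of $\mathfrak{a}$: the support condition transfers since $\mathrm{V}(\mathfrak{a})=\mathrm{V}((a))$; choosing $n,m$ with $\mathfrak{a}^n\subseteq (a)$ and $(a)^m\subseteq\mathfrak{a}$, the standard filtrations of $R/\mathfrak{a}^n$ and $R/(a)^m$ by subquotients annihilated by $\mathfrak{a}$, respectively by $(a)$, together with the induced long exact Ext sequences, show that $\mathrm{RHom}_R(R/\mathfrak{a},X)\in\mathrm{D}^\mathrm{f}(R)$ if and only if $\mathrm{RHom}_R(R/(a),X)\in\mathrm{D}^\mathrm{f}(R)$. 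Applying the same equivalence to each $\mathrm{H}_i(X)$ transfers the cofiniteness statement for the homology modules as well, and then Theorem \ref{lem:1.4}(2) applied to the principal ideal $(a)$ closes case (c).

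The step I expect to be the main obstacle is the radical-invariance observation in case (c): it is well-known in spirit but needs the Ext filtration argument sketched above to be made rigorous; everything else is a direct verification of hypotheses.
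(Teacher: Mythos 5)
The paper prints no proof of this corollary, but it is evidently meant to be read off from Theorem \ref{lem:1.4}, and that is exactly what you do, so your overall route is the intended one. Your cases (a) and (b) are complete and correct: a regular local ring is Cohen--Macaulay with $\mathrm{pd}_RR/\mathfrak{a}<\infty$ for every ideal, so the perfect case with $\mathrm{dim}R/\mathfrak{a}\leq1$ is literally Theorem \ref{lem:1.4}(3); and for $\mathrm{dim}R\leq2$ your Auslander--Buchsbaum computation (using that $R/\mathfrak{a}$ is Cohen--Macaulay because $\mathfrak{a}$ is perfect, and that $\mathrm{grade}_R\mathfrak{a}\geq1$ when $\mathfrak{a}\neq0$ in the domain $R$) correctly forces $\mathrm{dim}R/\mathfrak{a}\leq1$, the case $\mathfrak{a}=0$ being the trivial equivalence ``$X\in\mathrm{D}^{\mathrm{f}}(R)$ iff all $\mathrm{H}_i(X)$ are finitely generated.''

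The weak point is exactly the one you flagged in case (c): radical-invariance of cofiniteness for a complex $X$ unbounded in both directions. Your filtration of $R/\mathfrak{a}^n$ reduces the claim to showing that $\mathrm{RHom}_R(N,X)\in\mathrm{D}^{\mathrm{f}}(R)$ for every finitely generated module $N$ with $\mathfrak{a}N=0$, given $\mathrm{RHom}_R(R/\mathfrak{a},X)\in\mathrm{D}^{\mathrm{f}}(R)$. The long exact sequence attached to a presentation $0\to K\to(R/\mathfrak{a})^{s}\to N\to0$ only shows that $\mathrm{Ext}^{i}_R(N,X)$ is finitely generated \emph{provided} $\mathrm{Ext}^{i-1}_R(K,X)$ is, and the descending recursion through successive syzygies never reaches a vanishing range when $X$ is unbounded below; for modules, or for $X\in\mathrm{D}_\sqsubset(R)$, it bottoms out, which is why \cite[Proposition 1]{DM} and the bounded statements in \cite{WW} go through, but for general $X\in\mathrm{D}(R)$ the sketch as written is not yet a proof. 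A safe repair is to transfer only at the level of the homology modules, where radical invariance is the classical module statement: each $\mathrm{H}_i(X)$ is $\mathfrak{a}$-cofinite iff it is $(a)$-cofinite, and the latter is equivalent to $X$ being $(a)$-cofinite by Theorem \ref{lem:1.4}(2); what then remains is the equivalence of ``$X$ is $\mathfrak{a}$-cofinite'' with ``every $\mathrm{H}_i(X)$ is $\mathfrak{a}$-cofinite,'' which one obtains by splitting $X$ into $X_{\leq n}$ and $X_{>n}$ as in the proof of Theorem \ref{lem:1.4}(3), handling $X_{\leq n}\in\mathrm{D}_\sqsubset(R)$ by Theorem \ref{lem:1.1}(2) (valid since $\mathrm{ara}(\mathfrak{a})\leq1$) and killing the contribution of $X_{>n}$ in each fixed degree by taking $n\gg0$, using once more that $\mathrm{pd}_RR/\mathfrak{a}<\infty$ because $R$ is regular. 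Either complete that argument or cite the radical-invariance for complexes from a source whose boundedness hypotheses you have verified; as it stands, this one step is a genuine gap.
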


\bigskip
\section{\bf Cofiniteness of complexes in the case $d\geq3$}
The task of this section is to give an answer of Question 3 in the cases $\mathrm{dim}R/\mathfrak{a}\geq2$, or $\mathrm{dim}R\geq3$, or $\mathrm{ara}(\mathfrak{a})\geq2$. To do this, we first generalize the Bahmanpour, Naghipour and Sedghi's result \cite[Theorem 3.4]{BNS1} to not necessarily local rings.

\begin{lem}\label{lem:2.1}{\it{Let $M$ be an $R$-module with $\mathrm{dim}_RM\leq2$. Then
$M$ is $\mathfrak{a}$-cofinite if and only if $\mathrm{Supp}_RM\subseteq\mathrm{V}(\mathfrak{a})$ and  $\mathrm{Ext}^i_R(R/\mathfrak{a},M)$ are finitely generated for $i\leq2$.}}
\end{lem}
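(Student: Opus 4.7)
The ``only if'' direction is immediate from the definition of $\mathfrak{a}$-cofiniteness. For the converse, my plan is to deduce the result from the local version \cite[Theorem 3.4]{BNS1} (which handles $R$ local) together with a reduction that eliminates the $2$-dimensional part of $\mathrm{Supp}_R M$ and pulls the argument into the regime of Lemma \ref{lem:6.6}.

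First, I would localize at each maximal ideal $\mathfrak{m}\supseteq\mathfrak{a}$. The hypotheses propagate: $\mathrm{dim}_{R_\mathfrak{m}}M_\mathfrak{m}\leq 2$, $\mathrm{Supp}_{R_\mathfrak{m}}M_\mathfrak{m}\subseteq\mathrm{V}(\mathfrak{a}R_\mathfrak{m})$, and $\mathrm{Ext}^i_{R_\mathfrak{m}}(R_\mathfrak{m}/\mathfrak{a}R_\mathfrak{m},M_\mathfrak{m})\cong\mathrm{Ext}^i_R(R/\mathfrak{a},M)_\mathfrak{m}$ is finitely generated over $R_\mathfrak{m}$ for $i\leq 2$. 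Invoking \cite[Theorem 3.4]{BNS1} yields $\mathfrak{a}R_\mathfrak{m}$-cofiniteness of $M_\mathfrak{m}$; in particular, $\mathrm{Ext}^i_R(R/\mathfrak{a},M)_\mathfrak{m}$ is finitely generated over $R_\mathfrak{m}$ for every $i\geq 0$ and every such $\mathfrak{m}$.

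Second, I would exploit structural properties of $M$ to globalize. Since $\mathrm{Supp}_R M\subseteq\mathrm{V}(\mathfrak{a})$, the identity $\mathrm{Ass}_R M=\mathrm{Ass}_R\mathrm{Hom}_R(R/\mathfrak{a},M)$ forces $\mathrm{Ass}_R M$ to be finite, since $\mathrm{Hom}_R(R/\mathfrak{a},M)$ is finitely generated by hypothesis. Combined with $\mathrm{dim}_R M\leq 2$, this shows that the $2$-dimensional associated primes of $M$ --- which are automatically minimal primes of $\mathrm{Supp}_R M$ --- form a finite set $\{\mathfrak{p}_1,\ldots,\mathfrak{p}_s\}$, all of which are minimal primes of $\mathfrak{a}$ of coheight $2$. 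Using the already-established $\mathfrak{a}R_{\mathfrak{p}_j}$-cofiniteness of $M_{\mathfrak{p}_j}$ to control the $\mathfrak{p}_jR_{\mathfrak{p}_j}$-primary structure of $M_{\mathfrak{p}_j}$, I would extract (after a finite iteration of picking cyclic submodules at each $\mathfrak{p}_j$) a finitely generated submodule $N\subseteq M$ with $M/N\in FD_{\leq 1}$. The long exact $\mathrm{Ext}$-sequence associated with $0\to N\to M\to M/N\to 0$ transmits the finite-generation hypothesis from $M$ to $M/N$ in degrees $\leq 2$ (in fact degrees $\leq 1$ suffice), after which Lemma \ref{lem:6.6} applied to $M/N$ yields $\mathfrak{a}$-cofiniteness of $M/N$, and hence of $M$.

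The hard part will be controlling the extraction of $N$ so that $M/N$ genuinely loses every $2$-dimensional associated prime; naively picking one cyclic generator per such prime does not suffice, as a $\mathfrak{p}_j$-primary part of $M_{\mathfrak{p}_j}$ may persist in the quotient. The resolution will rest on the local BNS theorem at each $\mathfrak{p}_j$: it provides that $\mathrm{Hom}_{R_{\mathfrak{p}_j}}(R_{\mathfrak{p}_j}/\mathfrak{a}R_{\mathfrak{p}_j},M_{\mathfrak{p}_j})$ is of finite length over $R_{\mathfrak{p}_j}$, and this finiteness should bound the number of cyclic generators that must be removed in order to clear $\mathfrak{p}_j$ from $\mathrm{Ass}_R(M/N)$, ensuring termination of the iterative construction.
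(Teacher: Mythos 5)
Your first step (localizing at the maximal ideals containing $\mathfrak{a}$ and invoking the local theorem) is correct but does not advance the proof: knowing that $\mathrm{Ext}^i_R(R/\mathfrak{a},M)_{\mathfrak{m}}$ is finitely generated over $R_{\mathfrak{m}}$ for every maximal ideal $\mathfrak{m}$ does not imply finite generation over $R$, so nothing global comes out of it. The real content is your second step, and that is where there is a genuine gap: in general there is \emph{no} finitely generated submodule $N\subseteq M$ with $M/N\in FD_{\leq 1}$ (for $N$ finitely generated this is equivalent to $M\in FD_{\leq 1}$). Take $R=k[[x,y,z]]$, $\mathfrak{a}=(x)$ and $M=\mathrm{H}^1_{\mathfrak{a}}(R)=R_x/R$. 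Then $\dim_RM=2$, $\mathrm{Supp}_RM\subseteq\mathrm{V}(\mathfrak{a})$ and $M$ is $\mathfrak{a}$-cofinite, so all hypotheses of the lemma hold; but $M_{(x)}$ is the injective hull of the residue field of the discrete valuation ring $R_{(x)}$, an artinian module of infinite length. Hence for every finitely generated $N\subseteq M$ the localization $N_{(x)}$ has finite length, so $(M/N)_{(x)}\neq0$ and $\dim_RM/N=2$. Your proposed termination mechanism --- that finite length of $\mathrm{Hom}_{R_{\mathfrak{p}_j}}(R_{\mathfrak{p}_j}/\mathfrak{a}R_{\mathfrak{p}_j},M_{\mathfrak{p}_j})$ bounds the number of cyclic submodules needed to clear $\mathfrak{p}_j$ from the support of the quotient --- conflates a finite socle with finite length: artinian $\mathfrak{a}$-cofinite modules typically have finite socle and infinite length, and no finitely generated submodule can remove a $2$-dimensional prime at which $M$ has infinite length.

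The paper's proof gets around exactly this obstruction by passing to quotients by \emph{elements of} $\mathfrak{a}$ rather than by submodules. Setting $t=\mathrm{ara}_M(\mathfrak{a})$, it uses the attached primes of the artinian modules $M_{\mathfrak{p}_j}$ at the finitely many $2$-dimensional primes of $\mathrm{Supp}_RM$, together with prime avoidance, to choose $x_1,\cdots,x_t\in\mathfrak{a}$ so that $(0:_M(x_1,\cdots,x_t))$ is annihilated by a power of $\mathfrak{a}$ (hence finitely generated) and $(M/(x_1,\cdots,x_t)M)_{\mathfrak{q}_j}$ has finite length at each $2$-dimensional prime $\mathfrak{q}_j$ by \cite[Lemma 2.4]{BN}; only \emph{this} quotient admits a finitely generated submodule $L$ with a quotient of dimension $\leq1$, after which the dimension-$\leq1$ criterion applies. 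The two pieces are then reassembled by Melkersson's criterion \cite[Corollary 3.4]{LM}, which deduces cofiniteness of $M$ from that of $(0:_M(x_1,\cdots,x_t))$ and $M/(x_1,\cdots,x_t)M$. Any repair of your outline has to take this form: the module forced into $FD_{\leq1}$ must be $M/\mathfrak{b}M$ for a suitable finitely generated $\mathfrak{b}\subseteq\mathfrak{a}$, not $M/N$ for a finitely generated submodule $N$.
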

\begin{proof} `Only if' part is obvious.

`If' part. By \cite[Proposition 2.6]{BNS} we may assume $\mathrm{dim}_RM=2$ and let
$t=\mathrm{ara}_M(\mathfrak{a})$. If $t=0$, then $\mathfrak{a}^n\subseteq\mathrm{Ann}_RM$ for
some $n\geq0$, and so $M=(0:_M\mathfrak{a}^n)$ is finitely generated by \cite[Lemma 2.1]{ANS} and the assertion follows. Next, assume that $t>0$, and let\begin{center}
$T_M=\{\mathfrak{p}\in\mathrm{Supp}_RM\hspace{0.03cm}|\hspace{0.03cm}\mathrm{dim}R/\mathfrak{p}=2\}$.
\end{center}
As $\mathrm{Ass}_R\mathrm{Hom}_R(R/\mathfrak{a},M)=\mathrm{Ass}_RM$ is finite, the set $T_M$ is finite. Also for each $\mathfrak{p}\in T_M$, the $R_\mathfrak{p}$-module $\mathrm{Hom}_{R_\mathfrak{p}}(R_\mathfrak{p}/\mathfrak{a}R_\mathfrak{p},M_\mathfrak{p})$ is finitely generated and $M_\mathfrak{p}$ is an $\mathfrak{a}R_\mathfrak{p}$-torsion $R_\mathfrak{p}$-module with $\mathrm{Supp}_{R_\mathfrak{p}}M_\mathfrak{p}\subseteq\mathrm{V}(\mathfrak{p}R_\mathfrak{p})$, it follows from \cite[Proposition 4.1]{LM}
that $M_\mathfrak{p}$ is an artinian $\mathfrak{a}R_\mathfrak{p}$-cofinite $R_\mathfrak{p}$-module. Let
$T_M=\{\mathfrak{p}_1,\cdots\mathfrak{,p}_n\}$.
It follows from \cite[Lemma 2.5]{BN} that $\mathrm{V}(\mathfrak{a}R_{\mathfrak{p}_j})\cap\mathrm{Att}_{R_{\mathfrak{p}_j}}M_{\mathfrak{p}_j}
\subseteq\mathrm{V}(\mathfrak{p}_jR_{\mathfrak{p}_j})$ for $j=1,\cdots,n$. Set \begin{center}
$\mathrm{U}_M=\bigcup_{j=1}^n\{\mathfrak{q}\in\mathrm{Spec}R\hspace{0.03cm}|\hspace{0.03cm}\mathfrak{q}R_{\mathfrak{p}_j}
\in\mathrm{Att}_{R_{\mathfrak{p}_j}}M_{\mathfrak{p}_j}\}$.
\end{center}Then $\mathrm{U}_M\cap\mathrm{V}(\mathfrak{a})\subseteq T_M$. Since $t=\mathrm{ara}_M(\mathfrak{a})\geq 1$, there exist $y_1,\cdots,y_t\in\mathfrak{a}$
such that\begin{center}
$\mathrm{Rad}(\mathfrak{a}+\mathrm{Ann}_RM/\mathrm{Ann}_RM)=\mathrm{Rad}((y_1,\cdots,y_t)+\mathrm{Ann}_RM/\mathrm{Ann}_RM)$.
\end{center}Since $\mathfrak{a}\nsubseteq\bigcup_{\mathfrak{q}\in\mathrm{U}_M\backslash\mathrm{V}(\mathfrak{a})}\mathfrak{q}$, it follows that $(y_1,\cdots,y_t)+\mathrm{Ann}_RM\nsubseteq\bigcup_{\mathfrak{q}\in\mathrm{U}_M\backslash\mathrm{V}(\mathfrak{a})}\mathfrak{q}$.
On the other hand, for each $\mathfrak{q}\in\mathrm{U}_M$ we have $\mathfrak{q}R_{\mathfrak{p}_j}
\in\mathrm{Att}_{R_{\mathfrak{p}_j}}M_{\mathfrak{p}_j}$ for some $1\leq j\leq n$. Thus
\begin{center}
$(\mathrm{Ann}_RM)R_{\mathfrak{p}_j}\subseteq\mathrm{Ann}_{R_{\mathfrak{p}_j}}M_{\mathfrak{p}_j}\subseteq \mathfrak{q}R_{\mathfrak{p}_j}$,
\end{center}and so $\mathrm{Ann}_RM\subseteq\mathfrak{q}$. Consequently, $(y_1,\cdots,y_t)\nsubseteq\bigcup_{\mathfrak{q}\in\mathrm{U}_M\backslash\mathrm{V}(\mathfrak{a})}\mathfrak{q}$ as $\mathrm{Ann}_RM\subseteq\bigcap_{\mathfrak{q}\in\mathrm{U}_M\backslash\mathrm{V}(\mathfrak{a})}\mathfrak{q}$. Hence \cite[Ex.16.8]{M}
provides an element $a_1\in(y_2,\cdots,y_t)$ such that $y_1+a_1\not\in\bigcup_{\mathfrak{q}\in\mathrm{U}_M\backslash\mathrm{V}(\mathfrak{a})}\mathfrak{q}$. Set $x_1=y_1+a_1$. Then $x_1\in\mathfrak{a}$ and
\begin{center}
$\mathrm{Rad}(\mathfrak{a}+\mathrm{Ann}_RM/\mathrm{Ann}_RM)=\mathrm{Rad}((x_1,y_2,\cdots,y_t)+\mathrm{Ann}_RM/\mathrm{Ann}_RM)$.
\end{center}Let $M_1=(0:_Mx_1)$. Then $\mathrm{ara}_{M_1}(\mathfrak{a})=
\mathrm{ara}(\mathfrak{a}+\mathrm{Ann}_{R}M_{1}/\mathrm{Ann}_{R}M_{1})\leq t-1$ as $x_1\in\mathrm{Ann}_RM_1$. The reasoning in the preceding  applied to $M_1$, there exists $x_2\in\mathfrak{a}$ such that $\mathrm{ara}_{M_2}(\mathfrak{a})=
\mathrm{Rad}((y_3,\cdots,y_t)+\mathrm{Ann}_RM_2/\mathrm{Ann}_RM_2)\leq t-2$.
 Continuing this process, one obtains elements $x_1,\cdots,x_t\in\mathfrak{a}$ and the sequences
\begin{center}
$0\rightarrow M_{i}\rightarrow M_{i-1}\rightarrow x_iM_{i-1}\rightarrow0$,
\end{center}where $M_{0}=M$ and $M_{i}=(0:_{M_{i-1}}x_i)$ such that $\mathrm{ara}_{M_i}(\mathfrak{a})\leq t-i$ for $i=1,\cdots,t$, and these
exact sequences induce an exact sequence
\begin{center}
$0\rightarrow M_t\rightarrow M\rightarrow (x_1,\cdots,x_t)M\rightarrow0$.
\end{center}As $\mathrm{Hom}_R(R/\mathfrak{a},M_t)$ is finitely generated and $\mathrm{ara}_{M_t}(\mathfrak{a})=0$, one has $M_t$ is $\mathfrak{a}$-cofinite. Moreover, the above sequence implies that
$\mathrm{Ext}^1_R(R/\mathfrak{a},(x_1,\cdots,x_t)M)$ and $\mathrm{Ext}^2_R(R/\mathfrak{a},(x_1,\cdots,x_t)M)$ are finitely generated. Also the exact sequence \begin{center}$0\rightarrow (x_1,\cdots,x_t)M\rightarrow M\rightarrow M/(x_1,\cdots,x_t)M\rightarrow0$ \end{center} yields that $\mathrm{Hom}_R(R/\mathfrak{a},M/(x_1,\cdots,x_t)M)$ and $\mathrm{Ext}^1_R(R/\mathfrak{a},M/(x_1,\cdots,x_t)M)$ are finitely generated. On the other hand, for $i=1,\cdots,t$, the exact sequence \begin{center}
$0\rightarrow x_iM_{i-1}\rightarrow (x_1,\cdots,x_i)M\rightarrow (x_1,\cdots,x_{i-1})M\rightarrow0$
\end{center}induces the following commutative diagram
\begin{center}$\xymatrix@C=20pt@R=20pt{
     &  0\ar[d]  & 0 \ar[d] &  \\
       0\ar[r]&x_iM_{i-1}\ar[r] \ar[d]& M_{i-1}\ar[d] \ar[r]& M_{i-1}/x_iM_{i-1}\ar[d]^\cong\ar[r]&0\\
      0 \ar[r] &(x_1,\cdots,x_i)M \ar[d] \ar[r] &M  \ar[d]\ar[r] &M/(x_1,\cdots,x_i)M  \ar[r] & 0  \\
     &    (x_1,\cdots,x_{i-1})M  \ar[d]\ar@{=}[r] & (x_1,\cdots,x_{i-1})M\ar[d]  \\
    &  0& \hspace{0.15cm}0.  &}$
\end{center}Let $T_{M_{t-1}}=\{\mathfrak{p}\in\mathrm{Supp}_RM_{t-1}\hspace{0.03cm}|\hspace{0.03cm}\mathrm{dim}R/\mathfrak{p}=2\}$. If $T_{M_{t-1}}=\emptyset$, then $\mathrm{dim}_{R}M_{t-1}\leq 1$, it follows that $\mathrm{dim}_{R}M/(x_1,\cdots,x_t)M\leq 1$. Hence $M/(x_1,\cdots,x_t)M$ is $\mathfrak{a}$-cofinite by \cite[Proposition 2.6]{BNS}. Now assume that $T_{M_{t-1}}\neq \emptyset$ and set $T_{M_{t-1}}=\{\mathfrak{q}_{1},\cdots,\mathfrak{q}_{m}\}$. Since \begin{center}$\mathrm{Hom}_R(R/\mathfrak{a},M_{t-1}/x_tM_{t-1})\cong\mathrm{Hom}_R(R/\mathfrak{a},M/(x_1,\cdots,x_t)M)$\end{center} is finitely generated, it follows from \cite[Lemma 2.4]{BN} that $(M_{t-1}/x_tM_{t-1})_{\mathfrak{q}_{j}}$ has finite length for $j=1,\cdots,m$. Thus there is a finitely generated submodule $L_{j}$ of $M/(x_1,\cdots,x_t)M$ such that $(M/(x_1,\cdots,x_t)M)_{\mathfrak{q}_{j}}=(L_{j})_{\mathfrak{q}_{j}}$. Set $L=L_{1}+\cdots+L_{m}$. Then $L$ is a finitely generated submodule of $M/(x_1,\cdots,x_t)M$ so that $\mathrm{Supp}_R(M/(x_1,\cdots,x_t)M)/L\subseteq\mathrm{Supp}_RM_{t-1}\backslash\{\mathfrak{q}_{1},\cdots,\mathfrak{q}_{m}\}$ and hence $\mathrm{dim}_R(M/(x_1,\cdots,x_t)M)/L\leq1$. Now the exact sequence \begin{center}$0\rightarrow L\rightarrow M/(x_1,\cdots,x_t)M\rightarrow (M/(x_1,\cdots,x_t)M)/L\rightarrow0$\end{center}induces the following exact sequence
\begin{center}
$\mathrm{Hom}_{R}(R/\mathfrak{a},M/(x_1,\cdots,x_t)M)\rightarrow
\mathrm{Hom}_{R}(R/\mathfrak{a},(M/(x_1,\cdots,x_t)M)/L)\rightarrow\mathrm{Ext}^1_{R}(R/\mathfrak{a},L)\rightarrow
\mathrm{Ext}^1_{R}(R/\mathfrak{a},M/(x_1,\cdots,x_t)M)\rightarrow\mathrm{Ext}^1_{R}(R/\mathfrak{a},(M/(x_1,\cdots,x_t)M)/L)
\rightarrow\mathrm{Ext}^2_{R}(R/\mathfrak{a},L)$.\end{center} Hence $\mathrm{Hom}_{R}(R/\mathfrak{a},(M/(x_1,\cdots,x_t)M)/L)$ and $\mathrm{Ext}^1_{R}(R/\mathfrak{a},(M/(x_1,\cdots,x_t)M)/L)$ are finitely generated, and so $(M/(x_1,\cdots,x_t)M)/L$ is $\mathfrak{a}$-cofinite by \cite[Proposition 2.6]{BNS}. Consequently, $M/(x_1,\cdots,x_t)M$ is $\mathfrak{a}$-cofinite. As $M_t\cong\mathrm{Hom}_{R}(R/(x_1,\cdots,x_t)R,M)$ and $M/(x_1,\cdots,x_t)M$ are $\mathfrak{a}$-cofinite, it follows from \cite[Corollary 3.4]{LM} that $M$ is $\mathfrak{a}$-cofinite, as desired.
\end{proof}

\begin{cor}\label{lem:2.0}{\it{Let $\mathfrak{a}$ be an ideal of $R$ so that either $\mathrm{dim}R/\mathfrak{a}=2$, or $\mathrm{dim}R=3$, or $\mathrm{ara}(\mathfrak{a})=2$. Then $R$-module $M$ is $\mathfrak{a}$-cofinite if and only if $\mathrm{Supp}_RM\subseteq\mathrm{V}(\mathfrak{a})$ and $\mathrm{Ext}^{i}_R(R/\mathfrak{a},M)$ is finitely generated for $i=0,1,2$.}}
\end{cor}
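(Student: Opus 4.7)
The ``only if'' direction is immediate from the definition of $\mathfrak{a}$-cofiniteness. For the converse, the strategy in each case is to reduce to Lemma~\ref{lem:2.1}.

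In the first case $\dim R/\mathfrak{a}=2$, the support hypothesis $\mathrm{Supp}_R M\subseteq \mathrm{V}(\mathfrak{a})$ forces $\dim_R M\leq \dim R/\mathfrak{a}=2$, so Lemma~\ref{lem:2.1} applies directly with the given finiteness of $\mathrm{Ext}^i_R(R/\mathfrak{a},M)$ for $i\leq 2$.

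In the remaining two cases, $\dim_R M$ can reach $3$, so I would adapt the arithmetic-rank induction in the proof of Lemma~\ref{lem:2.1}. Set $t=\mathrm{ara}_M(\mathfrak{a})$, where $t\leq 2$ automatically in the case $\mathrm{ara}(\mathfrak{a})=2$. Finite generation of $\mathrm{Hom}_R(R/\mathfrak{a},M)$ implies that $\mathrm{Ass}_R M$ is finite, so the set of top-dimensional primes of $\mathrm{Supp}_R M$ (which in the case $\dim R=3$ are minimal primes of $R$) is finite. At each such prime $\mathfrak{p}$, Melkersson's criterion \cite[Proposition 4.1]{LM} shows that $M_\mathfrak{p}$ is artinian. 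Prime avoidance applied to the attached primes of these localizations then yields $x_1,\ldots,x_t\in \mathfrak{a}$ such that $M_t:=(0:_M(x_1,\ldots,x_t))$ has $\mathrm{ara}_{M_t}(\mathfrak{a})=0$ and is therefore finitely generated and $\mathfrak{a}$-cofinite. The short exact sequences
\[
0\to M_t\to M\to (x_1,\ldots,x_t)M\to 0,\qquad 0\to (x_1,\ldots,x_t)M\to M\to M/(x_1,\ldots,x_t)M\to 0,
\]
together with the finite generation of $\mathrm{Ext}^i_R(R/\mathfrak{a},M)$ for $i=0,1,2$, then propagate the required finiteness conditions to $M/(x_1,\ldots,x_t)M$.

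The main obstacle is the final step: verifying that $M/(x_1,\ldots,x_t)M$ is $\mathfrak{a}$-cofinite. In Lemma~\ref{lem:2.1} this was achieved by extracting a finitely generated submodule $L$ so that the residual quotient has dimension at most $1$, allowing \cite[Proposition 2.6]{BNS} to apply. For the dimension-$3$ analog one instead extracts $L$ so that the quotient has dimension at most $2$, where the hypothesis that $\mathrm{Ext}^2_R(R/\mathfrak{a},M)$ is finitely generated becomes essential in propagating finite generation of $\mathrm{Ext}^2_R(R/\mathfrak{a},L)$ through the long exact cohomology sequence; once the quotient has dimension at most $2$, Lemma~\ref{lem:2.1} applies. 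Finally, \cite[Corollary 3.4]{LM} combines the $\mathfrak{a}$-cofiniteness of $M_t$ and $M/(x_1,\ldots,x_t)M$ to conclude that $M$ itself is $\mathfrak{a}$-cofinite.
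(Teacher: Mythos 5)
Your handling of the case $\mathrm{dim}R/\mathfrak{a}=2$ is exactly the paper's: the support condition forces $\dim_RM\leq 2$ and Lemma~\ref{lem:2.1} finishes. For the other two cases, however, the paper does not rerun the arithmetic-rank induction at all -- it simply cites \cite[Theorem 2.3]{NS} and \cite[Corollary 2.2.13]{F} -- and your attempt to reproduce those cases by adapting the proof of Lemma~\ref{lem:2.1} has two genuine gaps.

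First, in the case $\mathrm{dim}R=3$ the Ext-degree bookkeeping does not close. Your terminal quotient $(M/(x_1,\dots,x_t)M)/L$ has dimension at most $2$, so to apply Lemma~\ref{lem:2.1} to it you need $\mathrm{Ext}^i_R(R/\mathfrak{a},(M/(x_1,\dots,x_t)M)/L)$ finitely generated for $i\leq 2$. Chasing this back through $0\to L\to M/(x_1,\dots,x_t)M\to (M/(x_1,\dots,x_t)M)/L\to 0$ requires $\mathrm{Ext}^2_R(R/\mathfrak{a},M/(x_1,\dots,x_t)M)$ to be finitely generated, which through $0\to (x_1,\dots,x_t)M\to M\to M/(x_1,\dots,x_t)M\to 0$ requires $\mathrm{Ext}^3_R(R/\mathfrak{a},(x_1,\dots,x_t)M)$, hence $\mathrm{Ext}^3_R(R/\mathfrak{a},M)$ -- which is not among the hypotheses. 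The proof of Lemma~\ref{lem:2.1} escapes this only because its terminal quotient has dimension at most $1$ and so needs just $\mathrm{Ext}^0$ and $\mathrm{Ext}^1$ via \cite[Proposition 2.6]{BNS}; raising the target dimension by one costs one Ext degree you do not have. A direct argument that does work for $\mathrm{dim}R=3$ is the reduction used in Lemma~\ref{lem:5.1}: pass to $\bar R=R/\Gamma_{\mathfrak{a}}(R)$ and $\bar M=M/(0:_M\mathfrak{a}^n)$, observe that $\bar{\mathfrak{a}}$ contains a nonzerodivisor of $\bar R$ so that $\mathrm{dim}\bar R/\bar{\mathfrak{a}}\leq 2$ and hence $\dim_R\bar M\leq 2$, and then apply Lemma~\ref{lem:2.1} to $\bar M$ (the kernel $(0:_M\mathfrak{a}^n)$ being finitely generated by \cite[Lemma 2.1]{ANS}). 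Second, in the case $\mathrm{ara}(\mathfrak{a})=2$ there is no bound on $\dim_RM$ whatsoever, so deleting the finitely many top-dimensional primes of $\mathrm{Supp}_RM$ lowers the dimension of the residual quotient by at most one and in general does not land you anywhere near the range of Lemma~\ref{lem:2.1}; this case genuinely requires the different, Koszul-type induction on the number of generators underlying \cite[Corollary 2.2.13]{F} rather than a dimension argument.
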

\begin{proof} This follows from Lemma \ref{lem:2.1}, \cite[Theorem 2.3]{NS} and \cite[Corollary 2.2.13]{F}.
\end{proof}

The next results answer Question 3 in the cases $\mathrm{dim}R/\mathfrak{a}=2$, $\mathrm{dim}R=3$ and $\mathrm{ara}(\mathfrak{a})=2$.

\begin{thm}\label{lem:3.1}{\it{Let $\mathfrak{a}$ be an ideal of $R$ and $s$ an integer.
 If either $\mathrm{dim}R/\mathfrak{a}=2$, or $\mathrm{dim}R=3$, or $\mathrm{ara}(\mathfrak{a})=2$, then for an $\mathfrak{a}$-cofinite complex $X\in \mathrm{D}_\sqsubset(R)$, $\mathrm{Hom}_R(R/\mathfrak{a},\mathrm{H}_{i}(X))$ is finitely generated for $i\geq -s$ if and only if $\mathrm{H}_{i}(X)$ is $\mathfrak{a}$-cofinite for $i\geq -s+1$. }}
\end{thm}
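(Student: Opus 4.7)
The strategy is to combine the characterization of $\mathfrak{a}$-cofinite modules given by Corollary~\ref{lem:2.0}, which in each of the three listed situations reduces $\mathfrak{a}$-cofiniteness of an $R$-module $M$ with $\mathrm{Supp}_RM\subseteq\mathrm{V}(\mathfrak{a})$ to finite generation of $\mathrm{Ext}^j_R(R/\mathfrak{a},M)$ for $j=0,1,2$, with the spectral sequence machinery of Lemma~\ref{lem:0.6} applied to $\mathfrak{b}=0$ and Serre subcategory $\mathcal{S}$ equal to the class of finitely generated $R$-modules. Because $X$ is $\mathfrak{a}$-cofinite we have $\mathrm{Ext}^n_R(R/\mathfrak{a},X)$ finitely generated for all $n$ and $\mathrm{Supp}_R\mathrm{H}_i(X)\subseteq\mathrm{V}(\mathfrak{a})$ for every $i$; when $\mathfrak{b}=0$, the modules $\mathrm{H}^{-q}_0(X)$ appearing in Lemma~\ref{lem:0.6} are simply the $\mathrm{H}_q(X)$.

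For the ``$\Leftarrow$'' direction only $\mathrm{Hom}_R(R/\mathfrak{a},\mathrm{H}_{-s}(X))$ requires argument, since the conclusion is immediate for $i\geq -s+1$. I would apply Lemma~\ref{lem:0.6} with $(0,s)$ placed in its first and second slots: condition~(1) is the $\mathfrak{a}$-cofiniteness of $X$, condition~(3) is vacuous, and condition~(2) asks for $\mathrm{Ext}^{1+i}_R(R/\mathfrak{a},\mathrm{H}_{i-s}(X))$ to be finitely generated for $1\leq i\leq s+\mathrm{sup}X$; but $i-s\geq -s+1$ in this range, so $\mathrm{H}_{i-s}(X)$ is $\mathfrak{a}$-cofinite by hypothesis and these Ext's are finitely generated.

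For the ``$\Rightarrow$'' direction I would run a downward induction on $i$ starting from $i>\mathrm{sup}X$ (where everything is trivially zero) and descending through $i=\mathrm{sup}X,\mathrm{sup}X-1,\ldots,-s+1$. At stage $i$, assuming $\mathrm{H}_j(X)$ is $\mathfrak{a}$-cofinite for every $j>i$, Corollary~\ref{lem:2.0} reduces the claim to finite generation of $\mathrm{Ext}^j_R(R/\mathfrak{a},\mathrm{H}_i(X))$ for $j=0,1,2$; the case $j=0$ is the standing hypothesis since $i\geq -s+1\geq -s$. For $j=1$ and $j=2$ I would apply Lemma~\ref{lem:0.6} with $(1,-i)$ and $(2,-i)$ respectively in its two slots. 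In both applications condition~(1) is cofiniteness of $X$, and condition~(2), which only involves $\mathrm{H}_{i+k}(X)$ with $k\geq 1$, holds by the inductive hypothesis. Condition~(3) is vacuous when the first slot is $1$; when it is $2$, it reduces to the single requirement that $\mathrm{Hom}_R(R/\mathfrak{a},\mathrm{H}_{i-1}(X))$ be finitely generated, which is supplied by the hypothesis precisely because $i-1\geq -s$ throughout the range $i\geq -s+1$.

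There is no serious obstacle I foresee: the care required is essentially bookkeeping, translating correctly between the cohomological indexing of Lemma~\ref{lem:0.6} and the homological indexing of the statement, and observing that the downward induction closes exactly at $i=-s+1$, since condition~(3) at this terminal step invokes $\mathrm{Hom}$ at index $-s$, which is the very boundary of the hypothesized range.
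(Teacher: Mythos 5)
Your argument is correct and follows essentially the same route as the paper's proof: both directions rest on Lemma \ref{lem:0.6} with $\mathfrak{b}=0$, and the forward direction is the same descending induction on homological degree (the paper phrases it as an ascending induction on $s$), closing each step with the $\mathrm{Ext}^{\leq 2}$ criterion of Corollary \ref{lem:2.0}. Your bookkeeping of the lemma's conditions (1)--(3), including the boundary use of $\mathrm{Hom}_R(R/\mathfrak{a},\mathrm{H}_{-s}(X))$ in condition (3) at the terminal step, matches the paper's argument exactly.
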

\begin{proof} `Only if' part follows from Lemma \ref{lem:0.6}.

`If' part.
We may assume that $s\geq-\mathrm{sup}X$, and use induction on $s$. When $s=-\mathrm{sup}X$, $-\mathrm{sup}X+1$, there
is nothing to prove. Suppose that $s>-\mathrm{sup}X+1$ and that the result has been proved for smaller
values of $s$. By the induction, it only
remains for us to prove that $\mathrm{H}_{-s+1}(X)$ is $\mathfrak{a}$-cofinite. As $\mathrm{Hom}_R(R/\mathfrak{a},\mathrm{H}_{-s}(X))$ is finitely generated, it follows from Lemma \ref{lem:0.6} that $\mathrm{Ext}^1_R(R/\mathfrak{a},\mathrm{H}_{-s+1}(X))$ and $\mathrm{Ext}^2_R(R/\mathfrak{a},\mathrm{H}_{-s+1}(X))$ are finitely generated. Hence Corollary \ref{lem:2.0} implies that $\mathrm{H}_{-s+1}(X)$ is $\mathfrak{a}$-cofinite.
\end{proof}

\begin{cor}\label{lem:3.41}{\it{Let $\mathfrak{a}$ be a proper ideal of a ring $R$ with $\mathrm{ara}(\mathfrak{a})=2$ and $X\in \mathrm{D}_\square(R)$. The following are equivalent:

 $(1)$ $X$ is $\mathfrak{a}$-cofinite and $\mathrm{Hom}_R(R/\mathfrak{a},\mathrm{H}_{i}(X))$ are finitely generated for all $i\in\mathbb{Z}$;

 $(2)$ $X$ is $\mathfrak{a}$-cofinite and $R/\mathfrak{a}\otimes_R\mathrm{H}_{i}(X)$ are finitely generated for all $i\in\mathbb{Z}$;

$(3)$ $\mathrm{H}_i(X)$ are $\mathfrak{a}$-cofinite for all $i\in\mathbb{Z}$.}}
\end{cor}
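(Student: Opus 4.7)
The plan is to establish the cyclic implications $(3)\Rightarrow(1)\Rightarrow(3)$ and $(3)\Rightarrow(2)\Rightarrow(3)$.

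For the easy directions $(3)\Rightarrow(1)$ and $(3)\Rightarrow(2)$: if each $\mathrm{H}_i(X)$ is $\mathfrak{a}$-cofinite, then $\mathrm{Hom}_R(R/\mathfrak{a},\mathrm{H}_i(X))$ is finitely generated by definition, while $R/\mathfrak{a}\otimes_R\mathrm{H}_i(X)$ is also finitely generated via the equivalence of Ext- and Tor-characterizations of $\mathfrak{a}$-cofiniteness for $\mathfrak{a}$-torsion modules (as supplied by \cite[Corollary 2.2.17]{F} and \cite[Theorem 2.5 and Corollary 3.2]{GM}, used repeatedly in Section 2). The cofiniteness of $X$ itself follows from the first assertion of Theorem \ref{lem:1.1}, which requires only $X\in\mathrm{D}_\sqsubset(R)$.

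For $(1)\Rightarrow(3)$, the plan is to invoke Theorem \ref{lem:3.1} directly. Since $X\in\mathrm{D}_\square(R)$, we have $\mathrm{H}_i(X)=0$ for $i<\mathrm{inf}X$. Choosing any integer $s\geq 1-\mathrm{inf}X$, the hypothesis in (1) guarantees that $\mathrm{Hom}_R(R/\mathfrak{a},\mathrm{H}_i(X))$ is finitely generated for all $i\geq -s$; Theorem \ref{lem:3.1} then yields $\mathfrak{a}$-cofiniteness of $\mathrm{H}_i(X)$ for every $i\geq -s+1$, and the remaining homology modules vanish and are trivially cofinite.

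The main obstacle is $(2)\Rightarrow(3)$, since Theorem \ref{lem:3.1} is phrased in terms of $\mathrm{Hom}$ rather than $\otimes$. The plan here is to mirror the argument of Theorem \ref{lem:3.1} on the Tor side, using the first quadrant Tor spectral sequence of Lemma \ref{lem:0.3} (taken with $\mathfrak{b}=0$) in place of the Ext spectral sequence of Lemma \ref{lem:0.2}. The key input is a Tor-version of Corollary \ref{lem:2.0}, namely that under $\mathrm{ara}(\mathfrak{a})=2$ an $\mathfrak{a}$-torsion $R$-module $M$ is $\mathfrak{a}$-cofinite if and only if $\mathrm{Tor}_i^R(R/\mathfrak{a},M)$ is finitely generated for $i=0,1,2$; this is obtained by combining Corollary \ref{lem:2.0} with the Ext--Tor equivalence above. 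An induction on the degree, parallel to the proof of Theorem \ref{lem:3.1}, then deduces $\mathfrak{a}$-cofiniteness of each $\mathrm{H}_i(X)$ from the finite generation of $R/\mathfrak{a}\otimes_R\mathrm{H}_i(X)$ together with the finite generation of $\mathrm{Tor}_i^R(R/\mathfrak{a},X)$ (the latter following from $\mathfrak{a}$-cofiniteness of $X$ via the same Ext--Tor equivalence, since $X\in\mathrm{D}_\square(R)$ makes the derived tensor product bounded).
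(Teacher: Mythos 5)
Your proposal follows essentially the same route as the paper: $(1)\Leftrightarrow(3)$ is read off from Theorem \ref{lem:3.1}, $(3)\Rightarrow(2)$ from the Ext--Tor equivalence for cofinite torsion modules, and $(2)\Rightarrow(3)$ by an induction on the homological degree driven by the Tor spectral sequence (Lemma \ref{lem:0.7}) together with a ``$\mathrm{Tor}_i^R(R/\mathfrak{a},M)$ finitely generated for $i\leq 2$ implies $\mathfrak{a}$-cofinite'' criterion. The one caveat is that this last criterion is not obtained by ``combining Corollary \ref{lem:2.0} with the Ext--Tor equivalence'' as you suggest---the all-degrees equivalence does not let you transfer a criterion truncated at degree $2$ from Ext to Tor---but the needed fact is precisely the separately established result \cite[Corollary 2.2.13]{F}, which is what the paper invokes at this step.
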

\begin{proof} (1) $\Leftrightarrow$ (3) follows from Theorem \ref{lem:3.1}.

(3) $\Rightarrow$ (2) follows from \cite[Lemma 2.4.4 and Corollary 2.2.17]{F}.

(2) $\Rightarrow$ (3) We may assume that $i\geq\mathrm{inf}X$. When $i=\mathrm{inf}X$, $\mathrm{inf}X+1$, there
is nothing to prove. Suppose that $i>\mathrm{inf}X$ and the result has been proved for smaller
values of $i+1$. As $R/\mathfrak{a}\otimes_R\mathrm{H}_{i+1}(X)$ is finitely generated, one has $\mathrm{Tor}_1^R(R/\mathfrak{a},\mathrm{H}_{i}(X))$ and $\mathrm{Tor}_2^R(R/\mathfrak{a},\mathrm{H}_{i}(X))$ are finitely generated by Lemma \ref{lem:0.7}. Hence $\mathrm{H}_{i}(X)$ is $\mathfrak{a}$-cofinite by \cite[Corollary 2.2.13]{F}.
\end{proof}

Next, we give an answer of Question 3 in the cases $\mathrm{dim}R/\mathfrak{a}=d\geq3$, or $\mathrm{dim}R/\mathfrak{a}=d-1$, or $\mathrm{ara}(\mathfrak{a})=d-1$. We start with the following useful lemmas.

\begin{lem}\label{lem:3.42'}{\it{Let $\mathfrak{a}$ be a proper ideal of a local ring $R$ with $\mathrm{dim}R=d\geq1$. Then an $\mathfrak{a}$-torsion $R$-module $M$ is $\mathfrak{a}$-cofinite if and only if $\mathrm{Ext}^i_R(R/\mathfrak{a},M)$ is finitely generated for $i\leq d-1$.}}
\end{lem}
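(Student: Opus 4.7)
The $(\Rightarrow)$ implication is immediate from the definition of $\mathfrak{a}$-cofiniteness. For the converse, I would proceed by induction on $d$.

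For the base case $d=1$, the hypothesis reduces to $\mathrm{Hom}_R(R/\mathfrak{a},M)$ being finitely generated. The decomposition
\begin{equation*}
0 \to \Gamma_{\mathfrak{m}}(M) \to M \to M/\Gamma_{\mathfrak{m}}(M) \to 0
\end{equation*}
is the starting point. Since $\mathfrak{a}\subseteq\mathfrak{m}$, the surjection $R/\mathfrak{a}\twoheadrightarrow R/\mathfrak{m}$ induces an embedding $\mathrm{Hom}_R(R/\mathfrak{m},M)\hookrightarrow\mathrm{Hom}_R(R/\mathfrak{a},M)$, whence the socle of $\Gamma_{\mathfrak{m}}(M)$ is finitely generated. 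Hence $\Gamma_{\mathfrak{m}}(M)$ is Artinian, and Artinian $\mathfrak{a}$-torsion modules are $\mathfrak{a}$-cofinite. For $N = M/\Gamma_{\mathfrak{m}}(M)$, the set $\mathrm{Ass}_R N$ lies in $\mathrm{V}(\mathfrak{a})\setminus\{\mathfrak{m}\}$, which consists only of minimal primes (as $\dim R = 1$). Together with $N$ being $\mathfrak{a}$-torsion, this forces $\dim_R N \leq 1$ and $\Gamma_{\mathfrak{m}}(N)=0$, so the $\mathfrak{a}$-cofiniteness of $N$ can be extracted via \cite[Proposition~2.6]{BNS} and Lemma~\ref{lem:6.6} applied to an appropriate $FD_{\leq 1}$ structure on $N$.

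For the inductive step $d \geq 2$, the same splitting yields $\Gamma_{\mathfrak{m}}(M)$ Artinian and hence $\mathfrak{a}$-cofinite, so the long exact Ext sequence shows $\mathrm{Ext}^i_R(R/\mathfrak{a},N)$ is finitely generated for $i\leq d-1$. Since $\depth_{\mathfrak{m}}N \geq 1$, prime avoidance applied to the finite set $\mathrm{Ass}_R R\cup \mathrm{Ass}_R N$ yields an element $x\in\mathfrak{m}$ that is both $R$-regular and $N$-regular. Setting $\bar R = R/xR$ and $\bar{\mathfrak{a}} = \mathfrak{a}\bar R$, the ring $\bar R$ is local of dimension $d-1$. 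From the short exact sequence $0\to N\xrightarrow{x} N\to N/xN\to 0$ combined with a standard change-of-rings computation, one obtains finite generation of $\mathrm{Ext}^i_{\bar R}(\bar R/\bar{\mathfrak{a}},N/xN)$ for $i\leq d-2$. Applying the inductive hypothesis in $\bar R$ produces $\bar{\mathfrak{a}}$-cofiniteness of $N/xN$, and tracing this back through $0\to N\xrightarrow{x}N\to N/xN\to 0$ together with an analysis of $0:_N x$ gives the $\mathfrak{a}$-cofiniteness of $N$, and hence of $M$.

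The main obstacle is the base case, since only $\mathrm{Hom}$ is assumed finitely generated; controlling $N = M/\Gamma_{\mathfrak{m}}(M)$ with solely this datum requires careful exploitation of the $1$-dimensionality of $R$. A secondary difficulty is that the regular element $x$ in the inductive step need not lie in $\mathfrak{a}$, complicating the relationship between $\mathfrak{a}$-cofiniteness over $R$ and $\bar{\mathfrak{a}}$-cofiniteness over $\bar R$. An alternative I would explore is invoking Lemma~\ref{lem:0.6} directly, with $\mathfrak{b}=\mathfrak{m}$ and $\mathcal{S}$ the Serre subcategory of finitely generated modules, exploiting the classical vanishing $\mathrm{H}^j_{\mathfrak{m}}(M) = 0$ for $j > d$ to short-circuit the descent on $d$ entirely.
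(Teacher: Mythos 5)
Your strategy is genuinely different from the paper's (which, for $d\geq 4$, uses $\mathrm{ara}(\mathfrak{a})\leq d$ to replace $\mathfrak{a}$ by $\mathfrak{a}'=(a_1,\dots,a_d)$, splits off the principal ideal $(a_d)$, and runs a base-change spectral sequence together with the principal-ideal cofiniteness criterion; for $d\leq 3$ it simply quotes Corollary \ref{lem:2.0} and \cite{NS}), but your inductive step has a genuine gap at exactly the point you call a ``secondary difficulty.'' Since $N$ is a nonzero $\mathfrak{a}$-torsion module, every element of $\mathfrak{a}$ (indeed of $\mathrm{Rad}(\mathfrak{a})$) acts locally nilpotently on $N$ and is therefore a zerodivisor on $N$; so your $N$-regular element $x$ necessarily lies outside $\mathrm{Rad}(\mathfrak{a})$, and multiplication by $x$ on $\mathrm{Ext}^i_R(R/\mathfrak{a},N)$ is \emph{not} the zero map. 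The long exact sequence attached to $0\to N\xrightarrow{\,x\,}N\to N/xN\to 0$ then only yields that $\mathrm{Ext}^i_R(R/\mathfrak{a},N)/x\,\mathrm{Ext}^i_R(R/\mathfrak{a},N)$ embeds in a finitely generated module; since $\mathrm{Ext}^i_R(R/\mathfrak{a},N)$ is not known to be finitely generated, Nakayama's lemma is unavailable and no conclusion follows (compare $T=E_R(R/\mathfrak{m})$, where $T/xT=0$ while $T$ is far from finitely generated). Melkersson-style descent through a regular element works precisely because one takes $x\in\mathfrak{a}$, which is impossible here; so ``tracing back'' from the cofiniteness of $N/xN$ to that of $N$ does not go through, and this is the heart of the matter rather than a technicality.

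There is a secondary problem in your base case $d=1$: for $N=M/\Gamma_{\mathfrak{m}}(M)$ you only control $\mathrm{Hom}_R(R/\mathfrak{a},N)$, yet both \cite[Proposition 2.6]{BNS} and Lemma \ref{lem:6.6} with an $FD_{\leq 1}$ structure require $\mathrm{Ext}^1_R(R/\mathfrak{a},N)$ to be finitely generated as well. The correct route is to exhibit an $FD_{\leq 0}$ structure: each $\mathfrak{p}\in\mathrm{Ass}_RN$ is a minimal prime, $\mathfrak{a}R_{\mathfrak{p}}$ is nilpotent in the Artinian ring $R_{\mathfrak{p}}$, so finite generation of $(0:_{N}\mathfrak{a})_{\mathfrak{p}}$ forces $N_{\mathfrak{p}}$ to be finitely generated; choosing a finitely generated $L\subseteq N$ with $L_{\mathfrak{p}}=N_{\mathfrak{p}}$ for the finitely many such $\mathfrak{p}$ gives $\mathrm{Supp}_R(N/L)\subseteq\{\mathfrak{m}\}$, and then Lemma \ref{lem:6.6} with $n=0$ (or \cite[Proposition 4.1]{LM}) applies using only the $\mathrm{Hom}$ hypothesis. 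Your closing alternative of feeding $\mathfrak{b}=\mathfrak{m}$ into Lemma \ref{lem:0.6} also does not close the argument: that lemma transfers finiteness from $\mathrm{Ext}^{*}_R(R/\mathfrak{a},M)$ to $\mathrm{Ext}^{*}_R(R/\mathfrak{a},\mathrm{H}^{*}_{\mathfrak{m}}(M))$ only under auxiliary finiteness hypotheses on the other local cohomology modules, and in any case does not return cofiniteness of $M$ itself.
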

\begin{proof} `Only if' part is trivial.

`If' part.
The case $d\leq3$ is by Corollary \ref{lem:2.0} and \cite[Theorem 2.3]{NS}. Now assume that $d\geq4$ and that the result has been proved for smaller
values of $d-1$. Then $\mathrm{ara}(\mathfrak{a})\leq d$. If $\mathrm{ara}(\mathfrak{a})\leq d-1$ then assertion follows from \cite[Corollary 2.2.13]{F}. So let $\mathrm{Rad}(\mathfrak{a})=\mathrm{Rad}(\mathfrak{a}')$ with $\mathfrak{a}'=(a_1,\cdots,a_d)\subseteq R$, and
set $\mathfrak{b}=(a_1,\cdots,a_{d-1})$ and $\mathfrak{c}=(a_d)$. Since
$\mathrm{Ext}^j_{R/\mathfrak{b}}(R/\mathfrak{a}',\mathrm{Hom}_R(R/\mathfrak{b},E))=0$ for $j\geq 1$ whenever $E$ is an injective $R$-module, it follows from \cite[Theorem 10.64]{R} that there is a third quadrant
spectral sequence \begin{center}$\xymatrix@C=10pt@R=5pt{
 \mathrm{Ext}^{p}_{R/\mathfrak{b}}(R/\mathfrak{a}',\mathrm{Ext}^{q}_R(R/\mathfrak{b},M))\ar@{=>}[r]_{\ \ \ \ \ \ \ p}&
 \mathrm{Ext}^{p+q}_R(R/\mathfrak{a}',M).}$\end{center} As $\mathrm{Ext}^i_R(R/\mathfrak{a}',M)$ are finitely generated for all $i\leq d-1$ by \cite[Corollary 2.2.13]{F}, it follows from the above spectral sequence that \begin{center}$\mathrm{Hom}_{R/\mathfrak{b}}(R/\mathfrak{a}',\mathrm{Ext}^{i}_R(R/\mathfrak{b},M))\cong
 \mathrm{Hom}_{R/\mathfrak{b}}(R/\mathfrak{b}\otimes_RR/\mathfrak{c},\mathrm{Ext}^{i}_R(R/\mathfrak{b},M))\cong
 \mathrm{Hom}_{R}(R/\mathfrak{c},\mathrm{Ext}^{i}_R(R/\mathfrak{b},M))$,\end{center}
 \begin{center}$\mathrm{Ext}^1_{R/\mathfrak{b}}(R/\mathfrak{a}',\mathrm{Ext}^{i}_R(R/\mathfrak{b},M))\cong
 \mathrm{Ext}^1_{R/\mathfrak{b}}(R/\mathfrak{b}\otimes_RR/\mathfrak{c},\mathrm{Ext}^{i}_R(R/\mathfrak{b},M))\cong
 \mathrm{Ext}^1_{R}(R/\mathfrak{c},\mathrm{Ext}^{i}_R(R/\mathfrak{b},M))$\end{center}are finitely generated for $i\leq d-2$. Consequently, $\mathrm{Ext}^{i}_R(R/\mathfrak{b},M))$ are $\mathfrak{c}$-cofinite for $i\leq d-2$, that is to say, $\mathrm{Ext}^j_{R/\mathfrak{b}}(R/\mathfrak{a}',\mathrm{Ext}^{i}_R(R/\mathfrak{b},M))\cong\mathrm{Ext}^j_{R}(R/\mathfrak{c},\mathrm{Ext}^{i}_R(R/\mathfrak{b},M))$ are finitely generated for all $j$ and $i\leq d-2$ by \cite[Corollary 10.65]{R}. Thus $\mathrm{Ext}^{i}_R(R/\mathfrak{a},M)$ are finitely generated for all $i$ by the spectral sequence and \cite[Corollary 2.2.13]{F}, as desired.
\end{proof}

The next lemma is a nice generalization of \cite[Theorem 2.3]{LM1} and \cite[Theorem 3.5]{BNS1}.

\begin{lem}\label{lem:5.1}{\it{Let $\mathfrak{a}$ be a proper ideal of $R$ with $\mathrm{dim}R/\mathfrak{a}=d\geq0$. Then an $\mathfrak{a}$-torsion $R$-module $M$ is $\mathfrak{a}$-cofinite if and only if $\mathrm{Ext}^i_R(R/\mathfrak{a},M)$ is finitely generated for $i\leq d$.}}
\end{lem}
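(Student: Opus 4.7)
The ``only if'' direction is immediate, so I focus on the ``if'' direction and argue by induction on $d$, generalizing the proof of Lemma \ref{lem:2.1}. For the base case $d = 0$, every prime in $\mathrm{V}(\mathfrak{a})$ is maximal, so $\mathrm{Supp}_R M$ consists only of maximal ideals. The finite generation of $\mathrm{Hom}_R(R/\mathfrak{a},M)$ forces $\mathrm{Ass}_R M$ and the socle of $M$ to be finite, so $M$ is Artinian, and \cite[Proposition 4.1]{LM} gives $\mathfrak{a}$-cofiniteness.

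For the inductive step $d \geq 1$, I perform a secondary induction on $t = \mathrm{ara}_M(\mathfrak{a})$, paralleling the strategy of Lemma \ref{lem:2.1}. When $t = 0$, $\mathfrak{a}^n M = 0$ for some $n$, so $M = (0:_M \mathfrak{a}^n)$ is finitely generated by \cite[Lemma 2.1]{ANS}. For $t \geq 1$, I form the finite set $T_M = \{\mathfrak{p} \in \mathrm{Supp}_R M : \dim R/\mathfrak{p} = d\}$ (finite since $\mathrm{Ass}_R M$ is finite). Each $M_\mathfrak{p}$ with $\mathfrak{p} \in T_M$ is Artinian and $\mathfrak{a}R_\mathfrak{p}$-cofinite, and using attached primes together with prime avoidance exactly as in Lemma \ref{lem:2.1}, I select $x \in \mathfrak{a}$ avoiding the pulled-back ``bad'' primes $U_M \setminus \mathrm{V}(\mathfrak{a})$. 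This forces $\mathrm{ara}_{(0:_M x)}(\mathfrak{a}) \leq t-1$ and $x \notin \mathfrak{p}$ for all $\mathfrak{p} \in T_M$, hence $\dim_R(M/xM) < d$.

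Iterating the construction produces $x_1, \ldots, x_t \in \mathfrak{a}$ such that $M_t := (0 :_M (x_1, \ldots, x_t))$ is finitely generated and $\dim_R(M/(x_1, \ldots, x_t)M) \leq d-1$. Propagating finite generation of $\mathrm{Ext}^i_R(R/\mathfrak{a},-)$ through the long exact sequences attached to each intermediate short exact sequence shows that $\mathrm{Ext}^i_R(R/\mathfrak{a}, M/(x_1, \ldots, x_t)M)$ is finitely generated for $i \leq d - 1$, so the outer induction hypothesis (applied to the ideal $\mathfrak{a} + (x_1, \ldots, x_t)$, whose quotient has dimension $\leq d - 1$) yields that $M/(x_1, \ldots, x_t)M$ is $\mathfrak{a}$-cofinite. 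Finally \cite[Corollary 3.4]{LM} combines $\mathfrak{a}$-cofiniteness of $M_t$ and of $M/(x_1, \ldots, x_t)M$ to produce $\mathfrak{a}$-cofiniteness of $M$. The main technical obstacle is the careful bookkeeping of the Ext range through the iterated long exact sequences, together with verifying that $(\mathfrak{a} + (x_1, \ldots, x_t))$-cofiniteness descends to $\mathfrak{a}$-cofiniteness (via a filtration of $R/\mathfrak{a}$ with finitely generated subquotients that are $R/(\mathfrak{a} + (x_1, \ldots, x_t))$-modules).
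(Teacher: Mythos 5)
Your reduction step contains a genuine error. You claim that the element $x\in\mathfrak{a}$ produced by prime avoidance satisfies $x\notin\mathfrak{p}$ for every $\mathfrak{p}\in T_M$, and conclude $\dim_R(M/xM)<d$. This is impossible: $M$ is $\mathfrak{a}$-torsion, so every $\mathfrak{p}\in\mathrm{Supp}_RM$ (in particular every $\mathfrak{p}\in T_M$) contains $\mathfrak{a}$ and hence contains $x$; thus $\mathrm{Supp}_R(M/xM)$ may still contain all of $T_M$ and the dimension need not drop. In the proof of Lemma \ref{lem:2.1} the prime avoidance is over $\mathrm{U}_M\setminus\mathrm{V}(\mathfrak{a})$, a set of \emph{attached} primes of the localizations $M_{\mathfrak{p}_j}$ rather than primes of $\mathrm{Supp}_RM$, and it serves a different purpose: it guarantees, via \cite[Lemma 2.4]{BN}, that $(M/(x_1,\cdots,x_t)M)_{\mathfrak{q}_j}$ has finite length at the top-dimensional primes, and the dimension only drops after one removes a suitable finitely generated submodule $L$. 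Your write-up skips exactly this mechanism. A second, related gap: your outer induction is on $d=\dim R/\mathfrak{a}$, but your reduction (even once repaired) lowers $\dim_RM$, not $\dim R/\mathfrak{a}$; the proposed patch of applying the hypothesis to $\mathfrak{a}+(x_1,\cdots,x_t)$ is vacuous because $x_1,\cdots,x_t\in\mathfrak{a}$, so that ideal equals $\mathfrak{a}$ and its quotient still has dimension $d$. To change the inductive parameter you must genuinely enlarge the ideal (for instance to $\mathfrak{a}+\mathrm{Ann}_RN$) and then transfer finite generation of $\mathrm{Ext}$ and cofiniteness back and forth, which is what \cite[Propositions 1 and 2]{DM} are for.

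For comparison, the paper's proof avoids arithmetic rank and attached primes altogether: after disposing of the nilpotent case, it passes to $\bar{R}=R/\Gamma_\mathfrak{a}(R)$ and $\bar{M}=M/(0:_M\mathfrak{a}^n)$, argues that $\bar{\mathfrak{a}}$ contains an $\bar{R}$-regular element so that $\dim R/(\mathfrak{a}+\Gamma_\mathfrak{a}(R))$ drops below $d$, applies the induction hypothesis to $\bar{M}$ with respect to the genuinely larger ideal $\mathfrak{a}+\Gamma_\mathfrak{a}(R)$, and uses \cite[Propositions 1 and 2]{DM} to pass between $\mathfrak{a}$- and $(\mathfrak{a}+\Gamma_\mathfrak{a}(R))$-cofiniteness; since $(0:_M\mathfrak{a}^n)$ is finitely generated, cofiniteness of $\bar{M}$ returns cofiniteness of $M$. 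Your base case $d=0$ via \cite[Proposition 4.1]{LM} is fine, but the inductive step needs to be rebuilt along one of these lines.
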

\begin{proof} `Only if' part is trivial.

`If' part. If $d\leq2$, then the assertion holds by Corollary \ref{lem:2.0}. Now let $d\geq3$, and suppose that the result holds for $d-1$. If $\mathfrak{a}$ is nilpotent, say $\mathfrak{a}^n=0$ for some integer $n$, then $M=(0:_M\mathfrak{a}^n)$ is finitely generated and so
$M$ is $\mathfrak{a}$-cofinite. Now suppose that $\mathfrak{a}$ is not nilpotent. We can choose
$n>0$ such that $(0 :_R\mathfrak{a}^n)=\Gamma_\mathfrak{a}(R)$. Set $\bar{R}=R/\Gamma_\mathfrak{a}(R)$, $\bar{M}=M/(0:_M\mathfrak{a}^n)$ and $\bar{\mathfrak{a}}$ be the image of $\mathfrak{a}$ in $\bar{R}$. Since $\Gamma_{\bar{\mathfrak{a}}}(\bar{R})=0$, $\bar{\mathfrak{a}}$ contains a nonzerodivisor of $\bar{R}$, and so $\mathrm{dim}R/(\mathfrak{a}+\Gamma_\mathfrak{a}(R))=\mathrm{dim}\bar{R}/\bar{\mathfrak{a}}\leq d-1$. The assumption on
$M$ and \cite[Proposition 1]{DM}
imply that $\mathrm{Ext}^i_R(R/(\mathfrak{a}+\Gamma_\mathfrak{a}(R)),M)$ is finitely generated for $i\leq d$. Note that $(0:_M\mathfrak{a}^n)$ is finitely generated, it follows from the exact sequence $0\rightarrow (0:_M\mathfrak{a}^n)\rightarrow M\rightarrow \bar{M}\rightarrow0$ that $\mathrm{Ext}^i_R(R/(\mathfrak{a}+\Gamma_\mathfrak{a}(R)),\bar{M})$ is finitely generated for $i\leq d$. By induction, $\bar{M}$ is $\mathfrak{a}+\Gamma_\mathfrak{a}(R)$-cofinite since
$\mathrm{Supp}_R\bar{M}\subseteq \mathrm{V}(\mathfrak{a}+\Gamma_\mathfrak{a}(R))$, and hence $\bar{M}$ is $\mathfrak{a}$-cofinite by \cite[Proposition 2]{DM}. Consequently, $M$ is $\mathfrak{a}$-cofinite.
\end{proof}

The next main theorem is a more general version of \cite[Theorem 2.8]{NS}.

\begin{thm}\label{lem:3.43}{\it{Let $d$ be a positive integer and $\mathfrak{a}$ a proper ideal of a local ring $R$ so that either $\mathrm{dim}R=d$, or $\mathrm{dim}R/\mathfrak{a}=d-1$, or $\mathrm{ara}(\mathfrak{a})=d-1$. Then for a complex $X$ of
 $\mathfrak{a}$-cofinite $R$-modules, $\mathrm{H}_i(X)$ are $\mathfrak{a}$-cofinite for all $i\in\mathbb{Z}$ if and only if $\mathrm{Ext}^j_R(R/\mathfrak{a},\mathrm{coker}d_i)$ are finitely generated for all $j\leq d-2$ and all $i\in\mathbb{Z}$.}}
\end{thm}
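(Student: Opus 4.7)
Setting $B_i:=\im d_{i+1}$ and $Z_i:=\ker d_i$ so that $\HH_i(X)=Z_i/B_i$ and $C_i:=\coker d_i=X_{i-1}/B_{i-1}$, the differentials of $X$ furnish three canonical short exact sequences
\[
(\ast_1)\;0\to B_{i-1}\to X_{i-1}\to C_i\to 0,\quad
(\ast_2)\;0\to Z_i\to X_i\to B_{i-1}\to 0,\quad
(\ast_3)\;0\to B_i\to Z_i\to \HH_i(X)\to 0,
\]
together with the derived sequence $0\to \HH_{i-1}(X)\to C_i\to B_{i-2}\to 0$, obtained from $Z_{i-1}/B_{i-1}\hookrightarrow X_{i-1}/B_{i-1}$ with cokernel $X_{i-1}/Z_{i-1}\cong B_{i-2}$. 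Every module appearing here is $\fa$-torsion, being a subquotient of the $\fa$-cofinite $X_i$, and each $X_i$ has $\ext^j_R(R/\fa,X_i)$ finitely generated for every $j$. The heart of the argument is the sharp cofiniteness criterion that holds uniformly in all three hypotheses: an $\fa$-torsion $R$-module $M$ is $\fa$-cofinite if and only if $\ext^j_R(R/\fa,M)$ is finitely generated for $j\le d-1$---this is Lemma \ref{lem:3.42'} when $\dim R=d$, Lemma \ref{lem:5.1} when $\dim R/\fa=d-1$, and \cite[Corollary 2.2.13]{F} when $\mathrm{ara}(\fa)=d-1$.

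For the forward direction, assuming each $\HH_i(X)$ is $\fa$-cofinite, I prove by induction on $j\ge 0$ that $\ext^j_R(R/\fa,C_i)$ is finitely generated for every $i$ (obtaining in fact the stronger conclusion for every $j$, not only $j\le d-2$). The long exact sequence of the derived SES, together with cofiniteness of $\HH_{i-1}(X)$, collapses the question to FG-ness of $\ext^j_R(R/\fa,B_{i-2})$, and $(\ast_1)$ at index $i-1$ (with $X_{i-2}$ cofinite) reduces this further to FG-ness of $\ext^{j-1}_R(R/\fa,C_{i-1})$---the inductive hypothesis. The base $j=0$ follows from $\Hom_R(R/\fa,B_{i-2})\subseteq \Hom_R(R/\fa,X_{i-2})$.

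For the reverse direction the plan has four steps. Step 1: the long exact sequence of $(\ast_1)$ combined with FG-ness of $\ext^*_R(R/\fa,X_{i-1})$ and of $\ext^{j-1}_R(R/\fa,C_i)$ for $j\le d-1$ (supplied by the hypothesis, since $j-1\le d-2$) shows $\ext^j_R(R/\fa,B_{i-1})$ is finitely generated for all $j\le d-1$. Step 2: because $B_{i-1}\subseteq X_{i-1}$ is $\fa$-torsion, the cofiniteness criterion promotes this at once to full $\fa$-cofiniteness of $B_{i-1}$---this is the pivotal step, absorbing one Ext degree for free. Step 3: the long exact sequence of $(\ast_2)$ then has $\fa$-cofinite outer terms $X_i$ and $B_{i-1}$, forcing $\ext^j_R(R/\fa,Z_i)$ to be finitely generated for every $j$; being $\fa$-torsion, $Z_i$ is $\fa$-cofinite. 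Step 4: applying the same reasoning to $(\ast_3)$ with both $B_i$ and $Z_i$ now cofinite delivers $\fa$-cofiniteness of $\HH_i(X)$.

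The main obstacle, and the reason the hypothesis is stated only up to degree $d-2$, is the off-by-one gap between a bare long-exact-sequence chase and what the cofiniteness criterion requires: chasing $(\ast_3)$ directly from the hypothesis produces $\ext^j_R(R/\fa,\HH_i(X))$ finitely generated only for $j\le d-2$, one short of the threshold $d-1$. Invoking the criterion first on $B_{i-1}$---which genuinely attains FG Ext's through degree $d-1$ thanks to the degree shift built into $(\ast_1)$---then promoting $Z_i$ and finally $\HH_i(X)$ to full $\fa$-cofiniteness is what closes that gap.
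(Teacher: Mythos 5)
Your proof is correct and follows essentially the same route as the paper's: the same decomposition of $X$ into the short exact sequences linking $B_i$, $Z_i$, $C_i$ and $\HH_i(X)$, with the pivotal step being the application of the uniform criterion (Lemmas \ref{lem:3.42'}, \ref{lem:5.1}, and \cite[Corollary 2.2.13]{F}) to the boundary module after the degree shift coming from $0\to B_{i-1}\to X_{i-1}\to C_i\to 0$. The only differences are cosmetic: your $C_i=\coker d_i$ is the paper's $C_{i-1}=\coker d_i$, and in the forward direction you chase the sequence $0\to \HH_{i-1}(X)\to C_i\to B_{i-2}\to 0$ where the paper chases the first row and second column of the same diagram.
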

\begin{proof} Set $B_i=\mathrm{im}d_{i+1}$, $Z_i=\mathrm{ker}d_{i}$, $C_i=\mathrm{coker}d_{i+1}$ and $H_i=\mathrm{H}_{i}(X)$. We have the following commutative diagram:\begin{center} $\xymatrix@C=20pt@R=20pt{
      & & 0\ar[d]  & 0 \ar[d] &  \\
       0\ar[r]& B_i\ar@{=}[d] \ar[r]& Z_i\ar[d]\ar[r] &H_i\ar[d]\ar[r]&0 \\
0\ar[r]& B_i \ar[r]& X_i\ar[d]\ar[r] &C_i\ar[d]\ar[r]&0 \\
&&B_{i-1}\ar[d] \ar@{=}[r] & B_{i-1} \ar[d] \\
    & & 0& \hspace{0.15cm}0.  &}$
\end{center}For any $i$, if $\mathrm{Ext}^j_R(R/\mathfrak{a},C_i)$ is finitely generated for $j\leq d-2$, then $\mathrm{Ext}^j_R(R/\mathfrak{a},B_i)$ is finitely generated for $j\leq d-1$ by the second row. Hence $B_i$ is $\mathfrak{a}$-cofinite by Lemmas \ref{lem:3.42'} and \ref{lem:5.1} and \cite[Corollary 2.2.13]{F}, and then $\mathrm{H}_i(X)$ is $\mathfrak{a}$-cofinite. Conversely, if each $H_i$  is $\mathfrak{a}$-cofinite, then, from the associated long exact sequences induces by the second column and the first row, each $B_i$ is $\mathfrak{a}$-cofinite. Thus each $C_i$ is $\mathfrak{a}$-cofinite, as required.
\end{proof}

\bigskip
\section{\bf Cofiniteness of local cohomology modules}
The task of this section is to investigate cofiniteness of local cohomology $\mathrm{H}^{i}_\mathfrak{a}(X)$ for $X\in \mathrm{D}_\sqsubset(R)$ in the cases $\mathrm{dim}R=d\geq1$, or $\mathrm{dim}R/\mathfrak{a}=d-1$, or $\mathrm{ara}(\mathfrak{a})=d-1$.

\begin{prop}\label{lem:2.8}{\it{Let $\mathfrak{a}$ be a proper ideal of $R$ and $t$ an integer, and let $X\in\mathrm{D}_\sqsubset(R)$ such that $\mathrm{RHom}_R(R/\mathfrak{a},X)\in \mathrm{D}^\mathrm{f}(R)$.

$(1)$ If $\mathrm{H}^{i}_\mathfrak{a}(X)\in FD_{\leq0}$ or $\mathrm{dim}_R\mathrm{H}^{i}_\mathfrak{a}(X)\leq 0$ for $i<t$ or $\mathrm{dim}R\leq1$, then $\mathrm{H}^{i}_\mathfrak{a}(X)$ is $\mathfrak{a}$-cofinite for $i<t$ and $\mathrm{Hom}_R(R/\mathfrak{a},\mathrm{H}^{t}_\mathfrak{a}(X))$ is finitely generated.

$(2)$ If $\mathrm{H}^{i}_\mathfrak{a}(X)\in FD_{\leq1}$ or $\mathrm{dim}_R\mathrm{H}^{i}_\mathfrak{a}(X)\leq 1$ for $i<t$ or $\mathrm{dim}R=2$, then $\mathrm{H}^{i}_\mathfrak{a}(X)$ is $\mathfrak{a}$-cofinite for $i<t$ and $\mathrm{Hom}_R(R/\mathfrak{a},\mathrm{H}^{t}_\mathfrak{a}(X))$ and $\mathrm{Ext}^1_R(R/\mathfrak{a},\mathrm{H}^{t}_\mathfrak{a}(X))$ are finitely generated.

$(3)$ If either $\mathrm{dim}R=3$, or $\mathrm{dim}R/\mathfrak{a}=2$, or $\mathrm{ara}(\mathfrak{a})=2$, then $\mathrm{H}^{i}_\mathfrak{a}(X)$ is $\mathfrak{a}$-cofinite for $i\leq t-1$ if and only if $\mathrm{Hom}_R(R/\mathfrak{a},\mathrm{H}^{i}_\mathfrak{a}(X))$ is finitely generated for $i\leq t$.

$(4)$ If $R$ is local so that either $\mathrm{dim}R=d\geq4$, or $\mathrm{dim}R/\mathfrak{a}=d-1$, or $\mathrm{ara}(\mathfrak{a})=d-1$, then $\mathrm{H}^{i}_\mathfrak{a}(X)$ is $\mathfrak{a}$-cofinite for $i\leq t-1$ if and only if $\mathrm{Hom}_R(R/\mathfrak{a},\mathrm{H}^{i+d-3}_\mathfrak{a}(X)),\cdots,\mathrm{Ext}^{d-3}_R(R/\mathfrak{a},\mathrm{H}^{i}_\mathfrak{a}(X))$ are finitely generated for $i\leq t$.}}
\end{prop}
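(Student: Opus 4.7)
The plan is to apply Lemma \ref{lem:0.6} systematically with $\mathcal{S}$ the Serre subcategory of finitely generated $R$-modules and $\mathfrak{b}=\mathfrak{a}$, using the third spectral sequence of Lemma \ref{lem:0.2},
\[ E^2_{p,q} = \mathrm{Ext}^{-p}_R(R/\mathfrak{a}, \mathrm{H}^{-q}_\mathfrak{a}(X)) \Longrightarrow \mathrm{Ext}^{-p-q}_R(R/\mathfrak{a}, X). \]
The standing assumption $\mathrm{RHom}_R(R/\mathfrak{a},X)\in\mathrm{D}^\mathrm{f}(R)$ makes the abutment finitely generated in every degree, so condition $(1)$ of Lemma \ref{lem:0.6} holds automatically throughout.

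The common thread for all four parts is an induction upward on $i$, starting from the smallest index with $\mathrm{H}^i_\mathfrak{a}(X)\neq 0$ (which exists because $X\in\mathrm{D}_\sqsubset(R)$). At each step I would apply Lemma \ref{lem:0.6} to deduce finite generation of $\mathrm{Ext}^s_R(R/\mathfrak{a},\mathrm{H}^i_\mathfrak{a}(X))$ in the range of $s$ dictated by the ambient cofiniteness criterion, and then invoke that criterion to promote $\mathrm{H}^i_\mathfrak{a}(X)$ to an $\mathfrak{a}$-cofinite module. The inductive hypothesis discharges condition $(2)$ for free, since every lower-indexed $\mathrm{H}^{i-i'}_\mathfrak{a}(X)$ is already cofinite; condition $(3)$, which concerns the higher-indexed $\mathrm{H}^{i+i'}_\mathfrak{a}(X)$, is vacuous when $s\leq 1$ and must otherwise be paid for from the statement's explicit hypotheses.

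For Parts $(1)$ and $(2)$ the required range is $s\leq 1$, so condition $(3)$ is empty and the cofiniteness criterion is Lemma \ref{lem:6.6} with $n=0$ or $n=1$ according to the sub-hypothesis. For Part $(3)$ one takes $s=0,1,2$ and invokes Corollary \ref{lem:2.0}; the sole nontrivial instance of condition $(3)$, at $s=2$, requires $\mathrm{Hom}_R(R/\mathfrak{a},\mathrm{H}^{i+1}_\mathfrak{a}(X))$ finitely generated, which the hypothesis provides since $i+1\leq t$. The forward implication in Part $(3)$ is a single application of Lemma \ref{lem:0.6} at $s=0$, $t'=t$. Part $(4)$ follows the same template with Lemma \ref{lem:3.42'} (or Lemma \ref{lem:5.1}) as cofiniteness criterion, forcing the range $s\leq d-1$. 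Condition $(3)$ at level $s$ then asks for finite generation of $\mathrm{Ext}^{s-1-i'}_R(R/\mathfrak{a},\mathrm{H}^{i+i'}_\mathfrak{a}(X))$ for $1\leq i'\leq s-1$; setting $j=s-1-i'$ and $k=i+i'$, the required pairs satisfy $j+k=s-1+i\leq t+d-3$ and $0\leq j\leq d-3$, which is precisely the locus of the hypothesized list $\mathrm{Hom}_R(R/\mathfrak{a},\mathrm{H}^{i+d-3}_\mathfrak{a}(X)),\ldots,\mathrm{Ext}^{d-3}_R(R/\mathfrak{a},\mathrm{H}^i_\mathfrak{a}(X))$ after the reindexing $i^{*}=i+s-d+2\leq t$.

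The main obstacle I anticipate is the combinatorial bookkeeping in Part $(4)$: one has to check across the simultaneous induction on $i$ and $s$ that the hypothesized $(j,k)$-locus of finite-generation data strictly covers every condition-$(3)$ requirement, with no gap at the boundary pairs $(s,i)=(d-1,t-1)$ or at the top indices $k=i+d-2$. Once that index-chase is set down cleanly, all four parts reduce to the same template, with only the cofiniteness criterion (among Lemmas \ref{lem:6.6}, \ref{lem:3.42'}, \ref{lem:5.1} and Corollary \ref{lem:2.0}) swapped according to which of the hypotheses on $\dim R$, $\dim R/\mathfrak{a}$, $\mathrm{ara}(\mathfrak{a})$ or $FD_{\leq n}$ is in force.
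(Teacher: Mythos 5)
Your proposal is the paper's own argument: the paper writes out only part (1) --- an upward induction starting at $-\sup X$, Lemma \ref{lem:0.6} with $\fb=\fa$ and $s=0$ at each step, then Lemma \ref{lem:6.6} to promote to cofiniteness --- and declares (2)--(4) ``similar''. Your treatment of (1), (2), of both directions of (3), and of the backward (``list $\Rightarrow$ cofinite'') direction of (4) is correct and is in fact more detailed than what the paper supplies: at stage $i\le t-1$ the pairs $(j,k)=(s-1-i',\,i+i')$ demanded by condition $(3)$ of Lemma \ref{lem:0.6} for $s\le d-1$ satisfy $0\le j\le d-3$ and $j+k=s-1+i\le t+d-3$, so they lie in the hypothesized locus, and Lemma \ref{lem:3.42'} (resp.\ Lemma \ref{lem:5.1}, resp.\ \cite[Corollary 2.2.13]{F}) closes the induction.

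The gap is the forward implication of part (4), which does not reduce to your template. Take $d=4$ and the list entry $\Hom_R(R/\fa,\HH^{t+1}_\fa(X))$ at $i=t$: you must show it is finitely generated knowing only that $\HH^{i}_\fa(X)$ is $\fa$-cofinite for $i\le t-1$. Lemma \ref{lem:0.6} at $(s,t)=(0,t+1)$ requires, via its condition $(2)$ with $i=1$, that $\ext^{2}_R(R/\fa,\HH^{t}_\fa(X))$ be finitely generated; but $\HH^{t}_\fa(X)$ is not known to be cofinite, and the only route to that Ext module (Lemma \ref{lem:0.6} at $(s,t)=(2,t)$) requires, via its condition $(3)$, precisely the finite generation of $\Hom_R(R/\fa,\HH^{t+1}_\fa(X))$ you are trying to establish --- the two applications are circular. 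In spectral-sequence terms, $E^{2}_{0,-t-1}$ is an extension of $\im d^{2}_{0,-t-1}\subseteq\ext^{2}_R(R/\fa,\HH^{t}_\fa(X))$ by a finitely generated module, and nothing in the hypotheses controls that image. The same circularity affects every entry $\ext^{j}_R(R/\fa,\HH^{k}_\fa(X))$ of the list with $k\ge t$ once $d\ge4$; for $d=3$ (part (3)) the list stops at $\Hom_R(R/\fa,\HH^{t}_\fa(X))$, condition $(3)$ is vacuous at $s=0$, and the forward direction really is a single application of Lemma \ref{lem:0.6}, which is why your argument there is fine. So the forward half of (4) needs a genuinely new idea or a weaker formulation; be aware this defect is inherited from the paper, whose proof of (4) consists of the word ``similar''.
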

\begin{proof} We just prove (1) since (2) and (3) and (4) are similar.

 We may assume that $t\geq-\mathrm{sup}X$. Set $s=0$ and $\mathfrak{b}=\mathfrak{a}$ in Lemma \ref{lem:0.6}, it is enough to show that  $\mathrm{H}^{i}_\mathfrak{a}(X)$ is $\mathfrak{a}$-cofinite
for all $i<t$. We prove by induction on $t$. The case
$t=-\mathrm{sup}X$ is obvious. Suppose $t>-\mathrm{sup}X$ and the result has been proved for smaller values of
$t$. Then $\mathrm{H}^i_\mathfrak{a}(X)$ is $\mathfrak{a}$-cofinite for $i<t-1$ by the induction. Hence
Lemma \ref{lem:0.6} implies that $\mathrm{Hom}_R(R/\mathfrak{a},\mathrm{H}^{t-1}_\mathfrak{a}(X))$ is finitely generated, and so
 $\mathrm{H}^{t-1}_\mathfrak{a}(X)$ is $\mathfrak{a}$-cofinite by Lemma \ref{lem:6.6}. Now the assertion follows from Lemma \ref{lem:0.6}.
\end{proof}

\begin{rem}\label{rem:2.5} \rm
(1) The above proposition is a nice generalization of \cite[Theorem 2.5]{BN0} and \cite[Theorem 2.6]{BN}.

(2) By Proposition \ref{lem:2.8}(1) and (2), one can obtain that the set $\mathrm{Ass}_{R}\mathrm{H}_{\mathfrak{a}}^{t}(X)$ is finite, which generalizes a result due to Brodmann and Faghani \cite[Theorem 2.2]{BF}.
\end{rem}

\begin{cor}\label{lem:2.48}{\it{Let $\mathfrak{a}\subseteq\mathfrak{b}$ be proper ideals of $R$ and $X\in\mathrm{D}_\sqsubset(R)$ such that $\mathrm{RHom}_R(R/\mathfrak{a},X)\in \mathrm{D}^\mathrm{f}(R)$.

$(1)$ If either $\mathrm{dim}R/\mathfrak{a}\leq1$, or $\mathrm{dim}R\leq2$, or $\mathrm{ara}(\mathfrak{a})\leq1$, then
$\mathrm{H}^i_\mathfrak{a}(X)$ are $\mathfrak{a}$-cofinite for all $i\in\mathbb{Z}$. In these cases, the set $\mathrm{Ass}_R\mathrm{H}^i_\mathfrak{a}(X)$, the Bass number $\mu^{j}_{R}(\mathfrak{p},\mathrm{H}^i_\mathfrak{a}(X))$ and the Betti number $\beta^{j}_{R}(\mathfrak{p},\mathrm{H}^i_\mathfrak{a}(X))$ are finite for all $i,j\in\mathbb{Z}$.

$(2)$ If $\mathrm{dim}R/\mathfrak{b}\leq1$, then $\mathrm{H}^i_\mathfrak{b}(X)$ are $\mathfrak{b}$-cofinite for all $i\in\mathbb{Z}$.

$(3)$ If either $\mathrm{dim}R/\mathfrak{a}=2$ or $\mathrm{dim}R=3$ or $\mathrm{ara}(\mathfrak{a})=2$ and either $\mathrm{H}^{2i}_\mathfrak{a}(X)$ is $\mathfrak{a}$-cofinite or $\mathrm{H}^{2i+1}_\mathfrak{a}(X)$ is $\mathfrak{a}$-cofinite, then  $\mathrm{H}^{i}_\mathfrak{a}(X)$ is $\mathfrak{a}$-cofinite for every $i\in\mathbb{Z}$.}}
\end{cor}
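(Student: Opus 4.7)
The overall strategy is to pass from the abstract condition $\mathrm{RHom}_R(R/\mathfrak{a},X)\in\mathrm{D}^\mathrm{f}(R)$ to cofiniteness of the local-cohomology complex $\mathrm{R}\Gamma_\mathfrak{a}(X)$ (or $\mathrm{R}\Gamma_\mathfrak{b}(X)$), and then invoke the complex-level criteria already established in Sections 2 and 3. The starting point is common to all three parts: $\mathrm{R}\Gamma_\mathfrak{a}(X)$ lies in $\mathrm{D}_\sqsubset(R)$, has support in $\mathrm{V}(\mathfrak{a})$, and the standard adjunction (reflecting that $R/\mathfrak{a}$ is $\mathfrak{a}$-torsion) gives $\mathrm{RHom}_R(R/\mathfrak{a},\mathrm{R}\Gamma_\mathfrak{a}(X))\simeq \mathrm{RHom}_R(R/\mathfrak{a},X)\in\mathrm{D}^\mathrm{f}(R)$, so $\mathrm{R}\Gamma_\mathfrak{a}(X)$ is $\mathfrak{a}$-cofinite in Faridian's sense.

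For part (1), each of the three hypotheses triggers Theorem \ref{lem:1.1}, which transfers cofiniteness of $\mathrm{R}\Gamma_\mathfrak{a}(X)$ to each homology module $\mathrm{H}^i_\mathfrak{a}(X)=\mathrm{H}_{-i}(\mathrm{R}\Gamma_\mathfrak{a}(X))$. Finiteness of $\mathrm{Ass}_R\mathrm{H}^i_\mathfrak{a}(X)$, $\mu^j_R(\mathfrak{p},\mathrm{H}^i_\mathfrak{a}(X))$ and $\beta^j_R(\mathfrak{p},\mathrm{H}^i_\mathfrak{a}(X))$ is then a standard consequence of cofiniteness: $\mathrm{Ass}_RM=\mathrm{Ass}_R\mathrm{Hom}_R(R/\mathfrak{a},M)$ for any $\mathfrak{a}$-torsion $M$, while the Bass and Betti numbers at $\mathfrak{p}$ are controlled by the finitely generated localizations $\mathrm{Ext}^j_R(R/\mathfrak{a},M)_\mathfrak{p}$ and $\mathrm{Tor}_j^R(R/\mathfrak{a},M)_\mathfrak{p}$. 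For part (2), the same blueprint applies with $\mathfrak{b}$ in place of $\mathfrak{a}$; the only new input is $\mathrm{RHom}_R(R/\mathfrak{b},X)\in\mathrm{D}^\mathrm{f}(R)$, which follows from the base-change isomorphism
\[
\mathrm{RHom}_R(R/\mathfrak{b},X)\simeq \mathrm{RHom}_{R/\mathfrak{a}}(R/\mathfrak{b},\mathrm{RHom}_R(R/\mathfrak{a},X))
\]
combined with the Noetherianity of $R/\mathfrak{a}$ and the finite generation of $R/\mathfrak{b}$ as an $R/\mathfrak{a}$-module. Theorem \ref{lem:1.1}, applied to $\mathrm{R}\Gamma_\mathfrak{b}(X)$ under the condition $\dim R/\mathfrak{b}\leq 1$, then completes (2).

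For part (3), assume without loss of generality that $\mathrm{H}^{2i}_\mathfrak{a}(X)$ is $\mathfrak{a}$-cofinite for every $i$; the opposite parity is symmetric. I would induct on odd $t$, starting from the trivial range $t<-\sup X$ where the module vanishes, and show that $\mathrm{H}^t_\mathfrak{a}(X)$ is $\mathfrak{a}$-cofinite. Under any of the three numerical hypotheses in (3), Corollary \ref{lem:2.0} reduces cofiniteness of an $\mathfrak{a}$-torsion module to the finite generation of $\mathrm{Ext}^s_R(R/\mathfrak{a},-)$ for $s=0,1,2$ only. At the inductive step I apply Lemma \ref{lem:0.6} with $\mathfrak{b}=\mathfrak{a}$ for each $s\in\{0,1,2\}$: hypothesis (1) is the assumed finite generation of $\mathrm{Ext}^{s+t}_R(R/\mathfrak{a},X)$; hypothesis (2) concerns $\mathrm{H}^{t-i}_\mathfrak{a}(X)$ for $i\geq 1$, which is cofinite either by the parity assumption (when $t-i$ is even) or by the induction (when $t-i$ is odd); and hypothesis (3), which only becomes non-vacuous at $s=2, i=1$, asks for finite generation of $\mathrm{Hom}_R(R/\mathfrak{a},\mathrm{H}^{t+1}_\mathfrak{a}(X))$, and $\mathrm{H}^{t+1}_\mathfrak{a}(X)$ is even-indexed, hence cofinite by hypothesis. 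Corollary \ref{lem:2.0} then closes the induction.

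The main subtlety is precisely the bookkeeping in part (3): one must confirm that Lemma \ref{lem:0.6}'s condition (3) never demands data at an odd index $t+i$ outside the reach of the induction. The constraint $1\leq i\leq s-1$ with $s\leq 2$ forces $i=1$, so only a single upward parity flip is needed, and that flip lands squarely on the cofinite parity furnished by the hypothesis—this is exactly the point where the asymmetric-looking assumption of (3) becomes indispensable.
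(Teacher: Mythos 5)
Your argument is correct, and it reaches the same destination by a slightly different routing. For parts (1) and (2) the paper does not pass through the torsion complex at all: it applies its Proposition \ref{lem:2.8}(2) (which runs Lemma \ref{lem:0.6} with $\mathfrak{b}=\mathfrak{a}$ directly on $X$, i.e.\ the spectral sequence $\mathrm{Ext}^{-p}_R(R/\mathfrak{a},\mathrm{H}^{-q}_\mathfrak{a}(X))\Rightarrow\mathrm{Ext}^{-p-q}_R(R/\mathfrak{a},X)$) and then quotes \cite[Corollary 2.4]{NS} for the finiteness of the associated primes, Bass and Betti numbers; for (2) it gets $\mathrm{RHom}_R(R/\mathfrak{b},X)\in\mathrm{D}^{\mathrm{f}}(R)$ from \cite[Proposition 7.2]{WW} rather than your explicit base-change adjunction, and then reduces to (1). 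Your detour through $\mathrm{R}\Gamma_\mathfrak{a}(X)$, the isomorphism $\mathrm{RHom}_R(R/\mathfrak{a},\mathrm{R}\Gamma_\mathfrak{a}(X))\simeq\mathrm{RHom}_R(R/\mathfrak{a},X)$, and Theorem \ref{lem:1.1} is equivalent, since Theorem \ref{lem:1.1} and Proposition \ref{lem:2.8} are both instances of Lemma \ref{lem:0.6} (with $\mathfrak{b}=0$ applied to $\mathrm{R}\Gamma_\mathfrak{a}(X)$ versus $\mathfrak{b}=\mathfrak{a}$ applied to $X$); what your version buys is that the three hypotheses of (1) are consumed by an already-proved complex-level theorem rather than by a separate module-level reduction, at the modest cost of having to justify the adjunction. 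For part (3) the paper's one-line proof (``Proposition \ref{lem:2.8}(3) and induction on $i$'') is exactly the bookkeeping you carried out by hand with Lemma \ref{lem:0.6} and Corollary \ref{lem:2.0}; your explicit verification that condition (3) of Lemma \ref{lem:0.6} only fires at $s=2$, $i=1$ and lands on the cofinite parity is precisely the point the paper leaves implicit, and it is the genuinely nontrivial part of the corollary.
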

\begin{proof} (1) This follows from Proposition \ref{lem:2.8}(2) and \cite[Corollary 2.4]{NS}.

(2) By \cite[Proposition 7.2]{WW}, $\mathrm{RHom}_R(R/\mathfrak{b},X)\in \mathrm{D}^\mathrm{f}(R)$. The statement follows from (1).

(3) The assertion follows from the proposition \ref{lem:2.8}(3) and using an induction on $i$.
\end{proof}

The following theorem gives some conditions for the correctness of the isomorphism $\mathrm{Ext}^{s+t}_R(R/\mathfrak{a},X)\cong\mathrm{Ext}^{s}_R(R/\mathfrak{a},\mathrm{H}^{t}_\mathfrak{a}(X))$, which was proved by Behrouzian and Aghapournahr when $X$ is an
$R$-module (see \cite[Theorem 4.4]{BA}).

\begin{thm}\label{lem:3.41}{\it{Let $\mathfrak{a},\mathfrak{b}$ be two ideals of $R$ with $\mathfrak{b}\subseteq\mathfrak{a}$, $X\in\mathrm{D}_\sqsubset(R)$ and $s\geq0,t\geq-\mathrm{sup}X$ such that

$(1)$ $\mathrm{Ext}^{s+t-i}_R(R/\mathfrak{a},\mathrm{H}^{i}_\mathfrak{b}(X))=0$ for all $-\mathrm{sup}X\leq i< t$ or $t+1\leq i\leq s+t$;

$(2)$ $\mathrm{Ext}^{s+1+i}_R(R/\mathfrak{a},\mathrm{H}^{t-i}_\mathfrak{b}(X))=0$ for all $0\leq i\leq t+\mathrm{sup}X$;

$(3)$ $\mathrm{Ext}^{s-1-i}_R(R/\mathfrak{a},\mathrm{H}^{t+i}_\mathfrak{b}(X))=0$ for all $0\leq i\leq s-1$.\\
Then $\mathrm{Ext}^{s+t}_R(R/\mathfrak{a},X)\cong\mathrm{Ext}^{s}_R(R/\mathfrak{a},\mathrm{H}^{t}_\mathfrak{b}(X))$.}}
\end{thm}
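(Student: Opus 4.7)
The plan is to read off the isomorphism directly from the first spectral sequence of Lemma~\ref{lem:0.2},
$$E^2_{p,q}=\mathrm{Ext}^{-p}_R(R/\mathfrak{a},\mathrm{H}^{-q}_\mathfrak{b}(X))\Longrightarrow\mathrm{Ext}^{-p-q}_R(R/\mathfrak{a},X),$$
by pinning attention on the slot $(p,q)=(-s,-t)$, whose $E^2$-entry is already $\mathrm{Ext}^s_R(R/\mathfrak{a},\mathrm{H}^t_\mathfrak{b}(X))$ and which sits on the diagonal of total degree $-p-q=s+t$. In the spirit of Lemma~\ref{lem:0.6}, the strategy is twofold: (i) show this slot survives to $E^\infty$, and (ii) show every other surviving subquotient in total degree $s+t$ vanishes, so the filtration of the abutment collapses to this single piece.

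For (i), I would kill both the outgoing and incoming differentials at $(-s,-t)$. The outgoing $d^r_{-s,-t}\colon E^r_{-s,-t}\to E^r_{-s-r,-t+r-1}$ has target a subquotient of $\mathrm{Ext}^{s+1+(r-1)}_R(R/\mathfrak{a},\mathrm{H}^{t-(r-1)}_\mathfrak{b}(X))$; hypothesis~(2) with $i=r-1$ zeros this for $2\leq r\leq t+\mathrm{sup}X+1$, and for $r>t+\mathrm{sup}X+1$ the local cohomology itself vanishes, since $X$ admits a semi-injective resolution $I$ with $I_k=0$ for $k>\mathrm{sup}X$ (as in the proof of Lemma~\ref{lem:0.2}), forcing $\mathrm{H}^{j}_\mathfrak{b}(X)=0$ for $j<-\mathrm{sup}X$. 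Symmetrically, the incoming $d^r_{-s+r,-t-r+1}\colon E^r_{-s+r,-t-r+1}\to E^r_{-s,-t}$ has source a subquotient of $\mathrm{Ext}^{s-1-(r-1)}_R(R/\mathfrak{a},\mathrm{H}^{t+(r-1)}_\mathfrak{b}(X))$; hypothesis~(3) kills it for $2\leq r\leq s$, and for $r\geq s+1$ the Ext-index $s-r<0$ is already negative. Hence $E^\infty_{-s,-t}=E^2_{-s,-t}$.

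For (ii), I would use the finite filtration
$$0=U^{-s-t-\mathrm{sup}X-1}\subseteq U^{-s-t-\mathrm{sup}X}\subseteq\cdots\subseteq U^{0}=\mathrm{Ext}^{s+t}_R(R/\mathfrak{a},X),\quad U^{p}/U^{p-1}\cong E^\infty_{p,-s-t-p},$$
and show that every $p\neq -s$ in the range contributes a zero quotient. Parametrizing by $i=s+t+p\in[-\mathrm{sup}X,s+t]$, the subquotient $U^p/U^{p-1}$ embeds into $\mathrm{Ext}^{s+t-i}_R(R/\mathfrak{a},\mathrm{H}^i_\mathfrak{b}(X))$, and the two clauses of hypothesis~(1) annihilate exactly the ranges $i\in[-\mathrm{sup}X,t-1]$ and $i\in[t+1,s+t]$; the only surviving index is $i=t$, i.e. $p=-s$. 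Splicing the short exact sequences $0\to U^{p-1}\to U^p\to U^p/U^{p-1}\to 0$ then yields
$$\mathrm{Ext}^{s+t}_R(R/\mathfrak{a},X)\cong E^\infty_{-s,-t}=E^2_{-s,-t}=\mathrm{Ext}^s_R(R/\mathfrak{a},\mathrm{H}^t_\mathfrak{b}(X)),$$
which is the desired isomorphism. The main obstacle is nothing deeper than careful bookkeeping: one has to partition each of hypotheses~(1)--(3) into its stated index range and a matching vacuous boundary range (where the Ext-degree is negative or the local-cohomology index lies below $-\mathrm{sup}X$), and verify that these partitions jointly cover every differential and every filtration quotient that could obstruct the identification.
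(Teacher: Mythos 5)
Your argument is correct and is essentially the paper's own proof: the same first spectral sequence from Lemma~\ref{lem:0.2}, the same analysis showing hypotheses (2) and (3) kill the differentials into and out of the slot $(-s,-t)$ (with the out-of-range cases handled by negative Ext-degree or vanishing of $\mathrm{H}^{j}_{\mathfrak{b}}(X)$ for $j<-\mathrm{sup}X$), and the same collapse of the filtration of $\mathrm{Ext}^{s+t}_R(R/\mathfrak{a},X)$ to the single quotient at $p=-s$ via hypothesis (1). Your version merely makes the boundary bookkeeping explicit where the paper leaves it implicit.
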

\begin{proof} Consider the first spectral sequence in Lemma \ref{lem:0.2}. There is a finite filtration\begin{center}
$0=U^{-s-t-\mathrm{sup}X-1}\subseteq U^{-s-t-\mathrm{sup}X}\subseteq\cdots \subseteq U^{0}=\mathrm{Ext}^{s+t}_R(R/\mathfrak{a},X)$,
\end{center}such that $U^{p}/U^{p-1}\cong E^\infty_{p,-s-t-p}$ for $-s-t\leq p+\mathrm{sup}X$.
Let $r\geq 2$. Consider the differential
\begin{center}$E^r_{-s+r,-t-r+1}\xrightarrow{d^r_{-s+r,-t-r+1}}E^r_{-s,-t}
\xrightarrow{d^r_{-s,-t}}E^r_{-s-r,-t+r-1}$.
\end{center}By conditions (2) and (3), we have $E^r_{-s+r,-t-r+1}=0=E^r_{-s-r,-t+r-1}$ for $r\geq 2$. By condition (1)
we have $0=U^{-s-t-\mathrm{sup}X-1}=\cdots=U^{-s-1}$ and $U^{-s}=\cdots=U^{0}=\mathrm{Ext}^{s+t}_R(R/\mathfrak{a},X)$.
So
\begin{center}$\mathrm{Ext}^{s}_R(R/\mathfrak{a},\mathrm{H}^{t}_\mathfrak{b}(X))=E^{2}_{-s,-t}\cong E^{\infty}_{-s,-t}\cong U^{-s}=\mathrm{Ext}^{s+t}_R(R/\mathfrak{a},X)$.\end{center}
We get the isomorphism we seek.
\end{proof}

The following corollary is a generalization of \cite[Corollary 4.5]{BA}.

\begin{cor}\label{lem:3.42}{\it{Let $\mathfrak{p}\in\mathrm{V}(\mathfrak{a})$, $X$ be in $\mathrm{D}_\sqsubset(R)$ and $s\geq0,t\geq-\mathrm{sup}X$. Suppose that
$\mathrm{Ext}^{s+t-i}_R(R/\mathfrak{p},\mathrm{H}^{i}_\mathfrak{a}(X))=0$ for $-\mathrm{sup}X\leq i< t$ or $t+1\leq i\leq s+t$;
$\mathrm{Ext}^{s+1+i}_R(R/\mathfrak{p},\mathrm{H}^{t-i}_\mathfrak{a}(X))=0$ for $0\leq i\leq t+\mathrm{sup}X$;
$\mathrm{Ext}^{s-1-i}_R(R/\mathfrak{p},\mathrm{H}^{t+i}_\mathfrak{a}(X))=0$ for $0\leq i\leq s-1$.
Then \begin{center}$\mu^{s+t}_{R}(\mathfrak{p},X)=\mu^{s}_R(\mathfrak{p},\mathrm{H}^{t}_\mathfrak{a}(X))$.\end{center}}}
\end{cor}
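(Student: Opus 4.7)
The plan is to deduce this identity directly from Theorem \ref{lem:3.41} by swapping the roles of the two ideals. Concretely, apply Theorem \ref{lem:3.41} with $\mathfrak{p}$ in place of the ideal there called $\mathfrak{a}$, and with $\mathfrak{a}$ in place of the ideal called $\mathfrak{b}$. The required inclusion $\mathfrak{b}\subseteq \mathfrak{a}$ in Theorem \ref{lem:3.41} becomes $\mathfrak{a}\subseteq \mathfrak{p}$, which is exactly the hypothesis $\mathfrak{p}\in \mathrm{V}(\mathfrak{a})$. Under this dictionary the three vanishing assumptions listed in the corollary match conditions (1)--(3) of Theorem \ref{lem:3.41} verbatim, so that theorem produces the isomorphism of $R$-modules
\[
\mathrm{Ext}^{s+t}_R(R/\mathfrak{p},X)\cong \mathrm{Ext}^{s}_R(R/\mathfrak{p},\mathrm{H}^{t}_\mathfrak{a}(X)).
\]

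Next I would pass to Bass numbers by localizing at $\mathfrak{p}$. Since $R/\mathfrak{p}$ is a finitely presented $R$-module and $R_\mathfrak{p}$ is $R$-flat, localization commutes with $\mathrm{Ext}$, giving the base-change isomorphism
\[
\mathrm{Ext}^i_R(R/\mathfrak{p},N)_\mathfrak{p}\cong \mathrm{Ext}^i_{R_\mathfrak{p}}(k(\mathfrak{p}),N_\mathfrak{p})
\]
for every $R$-module $N$, and the analogous statement for complexes in $\mathrm{D}_\sqsubset(R)$ via a semi-injective resolution. Both sides are $k(\mathfrak{p})$-vector spaces (each is annihilated by $\mathfrak{p}$, so localizing produces a module over $(R/\mathfrak{p})_\mathfrak{p}=k(\mathfrak{p})$), and their common dimension is by definition $\mu^i_R(\mathfrak{p},N)$. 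Localizing the displayed isomorphism above at $\mathfrak{p}$ and taking $\dim_{k(\mathfrak{p})}$ therefore gives the desired equality $\mu^{s+t}_R(\mathfrak{p},X)=\mu^{s}_R(\mathfrak{p},\mathrm{H}^{t}_\mathfrak{a}(X))$.

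No substantive obstacle is anticipated: the corollary is essentially a mechanical rephrasing of Theorem \ref{lem:3.41} in the vocabulary of Bass numbers, and all real work has already been absorbed into the spectral sequence argument of that theorem. The only point demanding a moment of care is confirming the flat base-change formula for $\mathrm{Ext}$ above, which is standard since $R/\mathfrak{p}$ is finitely presented over the noetherian ring $R$.
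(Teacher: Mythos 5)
Your proposal is correct and follows essentially the same route as the paper: both deduce the isomorphism $\mathrm{Ext}^{s+t}_R(R/\mathfrak{p},X)\cong\mathrm{Ext}^{s}_R(R/\mathfrak{p},\mathrm{H}^{t}_\mathfrak{a}(X))$ from Theorem \ref{lem:3.41} (with the roles of the two ideals instantiated as $\mathfrak{a}\subseteq\mathfrak{p}$) and then pass to Bass numbers by localizing at $\mathfrak{p}$ via flat base change. The only cosmetic difference is that the paper states the localized isomorphism over $R_\mathfrak{p}$ directly, whereas you apply the theorem over $R$ first and localize afterwards, which is an equally valid (arguably tidier) ordering of the same steps.
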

\begin{proof} By Theorem \ref{lem:3.41}, we have $\mathrm{Ext}^{s+t}_{R_\mathfrak{p}}(R_\mathfrak{p}/\mathfrak{p}R_\mathfrak{p},X_\mathfrak{p})\cong
\mathrm{Ext}^{s}_{R_\mathfrak{p}}(R_\mathfrak{p}/\mathfrak{p}R_\mathfrak{p},\mathrm{H}^{t}_\mathfrak{a}(X)_\mathfrak{p})$, which yields the desired equality.
\end{proof}

\begin{cor}\label{lem:3.43}{\it{Let $\mathfrak{a}$ be a proper ideal of $R$, $N$ a finitely generated $a$-torsion $R$-module, $X\in\mathrm{D}_\sqsubset(R)$ and $s\geq0,t\geq-\mathrm{sup}X$. Suppose that
$\mathrm{Ext}^{s+t-i}_R(N,\mathrm{H}^{i}_\mathfrak{a}(X))=0$ for $-\mathrm{sup}X\leq i< t$ or $t+1\leq i\leq s+t$;
$\mathrm{Ext}^{s+1+i}_R(N,\mathrm{H}^{t-i}_\mathfrak{a}(X))=0$ for $0\leq i\leq t+\mathrm{sup}X$;
$\mathrm{Ext}^{s-1-i}_R(N,\mathrm{H}^{t+i}_\mathfrak{a}(X))=0$ for $0\leq i\leq s-1$.
Then \begin{center}$\mathrm{Ext}^{s+t}_R(N,X)\cong\mathrm{Ext}^{s}_R(N,\mathrm{H}^{t}_\mathfrak{a}(X))$.\end{center}}}
\end{cor}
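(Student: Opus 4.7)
The plan is to mimic the argument of Theorem \ref{lem:3.41}, after first generalising the first spectral sequence of Lemma \ref{lem:0.2} so that $R/\mathfrak{a}$ is replaced by the finitely generated $\mathfrak{a}$-torsion module $N$. Concretely, the first step is to produce, for any such $N$ and any $X\in\mathrm{D}_\sqsubset(R)$, a convergent third-quadrant spectral sequence
\begin{center}
$E^2_{p,q}=\mathrm{Ext}^{-p}_R(N,\mathrm{H}^{-q}_\mathfrak{a}(X))\Rightarrow \mathrm{Ext}^{-p-q}_R(N,X).$
\end{center}
To construct it, I would (after shifting $X$ so that $\mathrm{sup}\,X=0$) take a projective resolution $P$ of $N$ and a semi-injective resolution $I$ of $X$ with $I_i=0$ for $i>0$, and form the third-quadrant bicomplex $M_{p,q}=\mathrm{Hom}_R(P_{-p},\Gamma_\mathfrak{a}(I_q))$. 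Running the filtration that computes vertical cohomology first produces $\mathrm{Hom}_R(P_{-p},\mathrm{H}^{-q}_\mathfrak{a}(X))$ (using exactness of $\mathrm{Hom}_R(P_{-p},-)$), and horizontal cohomology then yields the claimed $E^2$-page. The substantive point is the identification of the total complex in $\mathrm{D}(R)$,
\begin{center}
$\mathrm{Hom}_R(P,\Gamma_\mathfrak{a}(I))\simeq\mathrm{Hom}_R(N,\Gamma_\mathfrak{a}(I))=\mathrm{Hom}_R(N,I)\simeq\mathrm{RHom}_R(N,X),$
\end{center}
where the first quasi-isomorphism uses that $\Gamma_\mathfrak{a}(I)$ is still semi-injective (because $\Gamma_\mathfrak{a}$ preserves injectivity over the noetherian ring $R$), and the middle equality uses that every $R$-homomorphism out of the $\mathfrak{a}$-torsion module $N$ factors through the $\mathfrak{a}$-torsion part of its target.

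With this spectral sequence in hand, the remainder of the argument is a direct transcription of the proof of Theorem \ref{lem:3.41}. There is a finite filtration $0=U^{-s-t-\mathrm{sup}X-1}\subseteq\cdots\subseteq U^0=\mathrm{Ext}^{s+t}_R(N,X)$ with $U^{p}/U^{p-1}\cong E^\infty_{p,-s-t-p}$. Setting $i=s+t+p$, the $E^2$-term $E^2_{p,-s-t-p}=\mathrm{Ext}^{s+t-i}_R(N,\mathrm{H}^i_\mathfrak{a}(X))$ vanishes by hypothesis (1) whenever $i\neq t$, so every filtration quotient with $p\neq-s$ vanishes and the filtration collapses to $\mathrm{Ext}^{s+t}_R(N,X)\cong E^\infty_{-s,-t}$. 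Hypotheses (2) and (3) force $E^r_{-s-r,-t+r-1}=0=E^r_{-s+r,-t-r+1}$ for every $r\geq2$, so the incoming and outgoing differentials at position $(-s,-t)$ vanish on every page, giving $E^\infty_{-s,-t}\cong E^2_{-s,-t}=\mathrm{Ext}^s_R(N,\mathrm{H}^t_\mathfrak{a}(X))$. Composing the two isomorphisms yields the corollary.

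The main obstacle I anticipate is the derived-category identification of the total complex of $M$ with $\mathrm{RHom}_R(N,X)$; this is where the $\mathfrak{a}$-torsion hypothesis on $N$ is essential and where one must verify that $\Gamma_\mathfrak{a}(I)$ inherits semi-injectivity from $I$. Once that identification is in place, the spectral-sequence bookkeeping is a cosmetic rewrite of the $R/\mathfrak{a}$-case already handled in Theorem \ref{lem:3.41}.
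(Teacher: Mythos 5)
Your proof is correct, but it follows a genuinely different route from the paper's. The paper deduces the corollary from Theorem \ref{lem:3.41} by d\'evissage: it asserts a finite filtration $0=N_0\subseteq N_1\subseteq\cdots\subseteq N_r=N$ with $N_i/N_{i-1}\cong R/\mathfrak{a}$ and then chases the long exact sequences of $\mathrm{Ext}$ along $0\to N_{i-1}\to N_i\to N_i/N_{i-1}\to 0$. You instead rebuild the hyperext spectral sequence of Lemma \ref{lem:0.2} with $N$ in place of $R/\mathfrak{a}$ and rerun the collapse argument of Theorem \ref{lem:3.41} verbatim; the only new input is the identification of the abutment, which you correctly reduce to the semi-injectivity of $\Gamma_\mathfrak{a}(I)$ (a bounded-above complex of injectives, since $\Gamma_\mathfrak{a}$ preserves injectivity over a noetherian ring) together with the fact that maps out of the $\mathfrak{a}$-torsion module $N$ land in $\Gamma_\mathfrak{a}(I)$. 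Your route is arguably the more reliable one: a finitely generated $\mathfrak{a}$-torsion module need not admit a filtration all of whose quotients are isomorphic to $R/\mathfrak{a}$ (take $N=R/\mathfrak{m}$ with $\mathfrak{m}$ a maximal ideal strictly containing $\mathfrak{a}$); the prime filtration only produces quotients $R/\mathfrak{p}$ with $\mathfrak{p}\in\mathrm{V}(\mathfrak{a})$, and in any case the corollary's hypotheses are vanishing conditions on $\mathrm{Ext}^{*}_R(N,-)$ that do not automatically descend to the subquotients of such a filtration. The price you pay is having to reprove, rather than cite, the spectral sequence; the gain is that every step is airtight.
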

\begin{proof} For $N$, there is a finite filtration $0=N_0\subseteq N_1\subseteq\cdots\subseteq N_r=N$ such that $N_i/N_{i-1}\cong R/\mathfrak{a}$. A successive use of the short exact sequence
\begin{center}$0\rightarrow N_{i-1}\rightarrow N_i\rightarrow N_i
/N_{i-1}\rightarrow0$\end{center}
yields the desired isomorphism.
\end{proof}

\begin{cor}\label{lem:3.44}{\it{Let $\mathfrak{a}$ be a proper ideal of $R$, $Y\in\mathrm{D}^\mathrm{f}_\square(R)$ with $\mathrm{Supp}_RY\subseteq\mathrm{V}(\mathfrak{a})$, $X\in\mathrm{D}_\sqsubset(R)$ and $s\geq-\mathrm{inf}Y,t\geq-\mathrm{sup}X$. Suppose that
$\mathrm{Ext}^{s+t-i}_R(Y,\mathrm{H}^{i}_\mathfrak{a}(X))=0$ for $-\mathrm{sup}X\leq i< t$ or $t+1\leq i\leq s+t+\mathrm{inf}Y$;
$\mathrm{Ext}^{s+1+i}_R(Y,\mathrm{H}^{t-i}_\mathfrak{a}(X))=0$ for $0\leq i\leq t+\mathrm{sup}X$;
$\mathrm{Ext}^{s-1-i}_R(Y,\mathrm{H}^{t+i}_\mathfrak{a}(X))=0$ for $0\leq i\leq s+\mathrm{inf}Y-1$.
Then \begin{center}$\mathrm{Ext}^{s+t+\mathrm{inf}Y}_R(Y,X)\cong\mathrm{Ext}^{s+\mathrm{inf}Y}_R(Y,\mathrm{H}^{t}_\mathfrak{a}(X))$.\end{center}}}
\end{cor}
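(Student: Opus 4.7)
My plan is to prove Corollary \ref{lem:3.44} by extending the first spectral sequence of Lemma \ref{lem:0.2} so that $R/\mathfrak{a}$ is replaced by the bounded complex $Y$, and then running the abutment analysis of the proof of Theorem \ref{lem:3.41} at the same lattice point, with the appearance of $\inf Y$ in the statement absorbed by a single shift.

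First I would reduce to the case $\inf Y=0$. Replacing $Y$ by $\Sigma^{-\inf Y}Y$ and $s$ by $s+\inf Y$, one has $\mathrm{Ext}^{j}_R(Y,W)=\mathrm{Ext}^{\,j+\inf Y}_R(\Sigma^{-\inf Y}Y,W)$, so the target isomorphism becomes $\mathrm{Ext}^{s+t}_R(\Sigma^{-\inf Y}Y,X)\cong\mathrm{Ext}^{s}_R(\Sigma^{-\inf Y}Y,\mathrm{H}^{t}_\mathfrak{a}(X))$, and hypotheses $(1)$--$(3)$ transform into precisely the Theorem \ref{lem:3.41}-style hypotheses at $(s,t)$ with $s\geq0$, now stated for $\Sigma^{-\inf Y}Y$. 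Thus the claim reduces to the case $\inf Y=0$, which is the exact formal shape of Theorem \ref{lem:3.41} with $R/\mathfrak{a}$ replaced by $Y$.

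Next I would construct the spectral sequence
\[
E^{2}_{p,q}=\mathrm{Ext}^{-p}_R(Y,\mathrm{H}^{-q}_\mathfrak{a}(X))\Longrightarrow\mathrm{Ext}^{-p-q}_R(Y,X)
\]
by imitating the proof of Lemma \ref{lem:0.2}. Take a semi-projective resolution $P\xrightarrow{\simeq}Y$ with $P_i=0$ for $i<0$ (possible since $\inf Y=0$ and $Y\in\mathrm{D}^\mathrm{f}_\square(R)$) and a semi-injective resolution $X\xrightarrow{\simeq}I$ with $I_q=0$ for $q>\sup X$. The third-quadrant bicomplex $M_{p,q}=\mathrm{Hom}_R(P_{-p},\Gamma_\mathfrak{a}(I_q))$ has total complex $\mathrm{Hom}_R(P,\Gamma_\mathfrak{a}(I))$, which is quasi-isomorphic to $\mathrm{RHom}_R(Y,\mathrm{R}\Gamma_\mathfrak{a}(X))$. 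Because $Y\in\mathrm{D}^\mathrm{f}_\square(R)$ and $\mathrm{Supp}_RY\subseteq\mathrm{V}(\mathfrak{a})$, some power $\mathfrak{a}^N$ annihilates every $\mathrm{H}_i(Y)$, so every homology of $\mathrm{RHom}_R(Y,X)$ is $\mathfrak{a}$-torsion and the natural map $\mathrm{RHom}_R(Y,\mathrm{R}\Gamma_\mathfrak{a}(X))\to\mathrm{RHom}_R(Y,X)$ is an isomorphism. The two standard filtration spectral sequences of the bicomplex then converge to $\mathrm{Ext}^{-p-q}_R(Y,X)$, and the filtration by rows yields the indicated $E^{2}$-page.

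Finally I would repeat the abutment analysis in the proof of Theorem \ref{lem:3.41} verbatim at $(p,q)=(-s,-t)$: the translated forms of hypotheses $(2)$ and $(3)$ force $E^{r}_{-s+r,-t-r+1}=0=E^{r}_{-s-r,-t+r-1}$ for all $r\geq2$, whence $E^{\infty}_{-s,-t}=E^{2}_{-s,-t}=\mathrm{Ext}^{s}_R(Y,\mathrm{H}^{t}_\mathfrak{a}(X))$; hypothesis $(1)$ kills every other graded piece of the finite filtration on $\mathrm{Ext}^{s+t}_R(Y,X)$, so this Ext group equals $E^{\infty}_{-s,-t}$. Undoing the shift of Step one then delivers the claimed isomorphism. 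The main obstacle I expect is the identification $\mathrm{RHom}_R(Y,\mathrm{R}\Gamma_\mathfrak{a}(X))\simeq\mathrm{RHom}_R(Y,X)$, for this is where the hypotheses $Y\in\mathrm{D}^\mathrm{f}_\square(R)$ and $\mathrm{Supp}_RY\subseteq\mathrm{V}(\mathfrak{a})$ enter essentially; once this identification is in place, everything else is a transfer of the Lemma \ref{lem:0.2}/Theorem \ref{lem:3.41} bookkeeping from a single module to a bounded complex.
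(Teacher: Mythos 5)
Your route is genuinely different from the paper's. The paper never builds a spectral sequence for the complex $Y$: it first proves the module case (Corollary \ref{lem:3.43}) by filtering a finitely generated $\mathfrak{a}$-torsion module $N$ with successive quotients isomorphic to $R/\mathfrak{a}$ and chasing the resulting long exact sequences back to Theorem \ref{lem:3.41}, and then deduces the present corollary by induction on $\mathrm{sup}Y-\mathrm{inf}Y$, peeling off the top homology of $Y$ via the triangle $\Sigma^{\mathrm{sup}Y}\mathrm{H}_{\mathrm{sup}Y}(Y)\rightarrow Y\rightarrow Y'\rightsquigarrow$. You instead generalize Lemma \ref{lem:0.2} directly, replacing $R/\mathfrak{a}$ by $Y$, and rerun the abutment analysis of Theorem \ref{lem:3.41} once. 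Your approach is more uniform (it subsumes the module case and the $R/\mathfrak{a}$ case in one stroke and avoids the d\'evissage), at the price of having to identify the abutment of the new spectral sequence; the paper only ever invokes the spectral sequence for the module $R/\mathfrak{a}$ and transfers the conclusion by elementary homological algebra.

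Two points in your writeup need repair. First, your justification of $\mathrm{RHom}_R(Y,\mathrm{R}\Gamma_\mathfrak{a}(X))\simeq\mathrm{RHom}_R(Y,X)$ is incomplete: knowing that the homology of the target is $\mathfrak{a}$-torsion does not by itself make the comparison map an isomorphism. You need the intermediate isomorphism $\mathrm{RHom}_R(Y,\mathrm{R}\Gamma_\mathfrak{a}(X))\simeq\mathrm{R}\Gamma_\mathfrak{a}(\mathrm{RHom}_R(Y,X))$ (tensor-evaluation against the \v{C}ech complex, which is exactly where $Y\in\mathrm{D}^\mathrm{f}_\square(R)$ and $X\in\mathrm{D}_\sqsubset(R)$ enter), followed by $\mathrm{R}\Gamma_\mathfrak{a}(Z)\simeq Z$ for $Z$ with $\mathfrak{a}$-torsion homology. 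Second, the claim that hypotheses $(1)$--$(3)$ ``transform into precisely'' the Theorem \ref{lem:3.41}-style hypotheses for $\Sigma^{-\mathrm{inf}Y}Y$ is false when $\mathrm{inf}Y\neq0$: since $\mathrm{Ext}^{j}_R(Y,W)=\mathrm{Ext}^{\,j-\mathrm{inf}Y}_R(\Sigma^{-\mathrm{inf}Y}Y,W)$, killing the differentials at $E^{2}_{-s,-t}$ for $\Sigma^{-\mathrm{inf}Y}Y$ requires, for instance, $\mathrm{Ext}^{s+1+i+\mathrm{inf}Y}_R(Y,\mathrm{H}^{t-i}_\mathfrak{a}(X))=0$, whereas the printed hypothesis supplies only $\mathrm{Ext}^{s+1+i}_R(Y,\mathrm{H}^{t-i}_\mathfrak{a}(X))=0$; the superscripts are off by $\mathrm{inf}Y$ throughout. (The paper's own inductive proof exhibits the same mismatch against its own hypotheses, so this is most plausibly a mis-indexing in the statement; rewriting the hypotheses in terms of $\sigma=s+\mathrm{inf}Y$ makes your argument close.) You should carry out this translation explicitly rather than assert it.
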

\begin{proof} We use induction on $\mathrm{sup}Y-\mathrm{inf}Y$.
 If $\mathrm{inf}Y=\mathrm{sup}Y=r$, then $Y\simeq\Sigma^r\mathrm{H}_r(Y)$ and $\mathrm{Ext}^{s+t+\mathrm{inf}Y}_R(Y,X)\cong\mathrm{Ext}^{s+t}_R(\mathrm{H}_r(Y),X)
 \cong\mathrm{Ext}^{s+\mathrm{inf}Y}_R(Y,\mathrm{H}^{t}_\mathfrak{a}(X))$ for all $i\leq s$ by Corollary \ref{lem:3.43}. Now assume that $\mathrm{sup}Y-\mathrm{inf}Y>0$. Set $Y'=0\rightarrow Y_{\mathrm{sup}Y}/\mathrm{Ker}d_{\mathrm{sup}Y}\xrightarrow{\bar{d}_{\mathrm{sup}Y}}Y_{\mathrm{sup}Y-1}\xrightarrow{d_{\mathrm{sup}Y-1}}\cdots$. One obtain an exact
 triangle $\Sigma^{\mathrm{sup}Y}\mathrm{H}_{\mathrm{sup}Y}(Y)\rightarrow Y\rightarrow Y'\rightsquigarrow$ in $\mathrm{D}(R)$, which induces the following exact sequence \begin{center}$\mathrm{Ext}^{s+t+\mathrm{inf}Y}_R(Y',X)\rightarrow\mathrm{Ext}^{s+t+\mathrm{inf}Y}_R(Y,X)
\rightarrow\mathrm{Ext}^{s+t+\mathrm{inf}Y}_R(\Sigma^{\mathrm{sup}Y}\mathrm{H}_{\mathrm{sup}Y}(Y),X)$.\end{center} Therefore, $\mathrm{Ext}^{s+t+\mathrm{inf}Y}_R(Y,X)\cong\mathrm{Ext}^{s+\mathrm{inf}Y}_R(Y,\mathrm{H}^{t}_\mathfrak{a}(X))$ by the induction.
\end{proof}

\bigskip \centerline {\bf Acknowledgments} This research was partially supported by National Natural Science Foundation of China (11761060,11901463).

\bigskip


\begin{thebibliography}{99}
\bibitem{ANS} L. Abdi, R. Naghipour, M. Sedghi, Faltings' local-global principle for finiteness dimension of cofinite modules, \emph{Arch. Math.} \textbf{112} (2019) 33--39.
\bibitem{AN} D. Asadollahi, R. Naghipour, Faltings' local-global principle for the finiteness of local cohomology modules, \emph{Commun. Algebra} \textbf{43} (2015) 953--958.
\bibitem{BN0} K. Bahmanpour, R. Naghipour, On the cofiniteness of local cohomology modules, \emph{Proc. Amer. Math. Soc.} \textbf{136} (2008) 2359--2363.
\bibitem{BN} K. Bahmanpour, R. Naghipour, Cofiniteness of local cohomology modules for ideals of small dimension, \emph{J. Algebra} \textbf{321} (2009) 1997--2011.
\bibitem{BNS} K. Bahmanpour, R. Naghipour, M. Sedghi, On the category of cofinite modules which is
abelian, \emph{Proc. Amer. Math. Soc.} \textbf{142} (2014) 1101--1107.
\bibitem{BNS1} K. Bahmanpour, R. Naghipour, M. Sedghi, Cofiniteness with respect to ideals of small dimensions, \emph{Algebra Represent. Theor.} \textbf{18} (2015) 369--379.
\bibitem{BA} M. Behrouzian, M. Aghapournahr, Lower bounds of certain general local cohomology modules, \emph{Commun. Algebra} \textbf{48} (2020) 2406--2417.
\bibitem{BF} M.P. Brodmann, A. L. Faghani, A finiteness result for associated primes of local cohomology modules, \emph{Proc. Amer. Math. Soc.} \textbf{128} (2000) 2851--2853.
\bibitem{BS} M.P. Brodmann, R.Y. Sharp, \emph{Local Cohomology: An Algebraic Introduction with Geometric Applications}, Cambridge University Press, 1998.
\bibitem{BH} W. Bruns, J. Herzog, \emph{Cohen-Macaulay rings}, Cambridge Studies in Advanced Mathematics 39, Cambridge University Press, 1993.
\bibitem{DM} D. Delfino and T. Marley, Cofinite modules and local cohomology, \emph{J. Pure Appl. Algebra} \textbf{121} (1997) 45--52.
\bibitem{EK} K. Eto and K. Kawasaki, A characterization of cofinite complexes over complete Gorenstein domains, \emph{J. Commut. Algebra} \textbf{3} (2011) 537--550.
\bibitem{F} H. Faridian, Gorenstein Homology and Finiteness Properties of Local (Co)homology, Ph. D. thesis, Shahid Beheshti University (2020), arXiv:2010.03013v1.
\bibitem{GM} J.P.C. Greenlees, J.P. May, Derived functors of $I$-adic completion and local homology, \emph{J. Algebra} \textbf{149} (1992) 438--453.
\bibitem{Gr} A. Grothendieck, \emph{Local Cohomology}, Notes by R. Hartshorne, Lecture Notes in Math., vol. 862, Springer, New York, 1966.
\bibitem{G} A. Grothendieck, \emph{Cohomologie locale des faisceaux coh$\grave{e}$rents et th$\grave{e}$or$\grave{e}$mes de Lefschetz locaux et globaux}, (SGA 2), North-Holland, Amsterdam, 1968.
\bibitem{H} R. Hartshorne, Affine duality and cofiniteness, \emph{Invent. Math.} \textbf{9} (1969/1970) 145--164.
\bibitem{Hu}C. Huneke, \emph{Problems on local cohomology}, in Free Resolutions in Commutative Algebra and
Algebraic Geometry: Sundance 90 (Edited by D. Eisenbud and C. Huneke).
\bibitem{L} J. Lipman, \emph{Lectures on local cohomology and duality}, in: Local cohomology and its applications (Guanajuato, Mexico), Lecture Notes Pure Appl. Math. 226, Marcel Dekker, New York, 2002, pp. 39--89.
\bibitem{I} S. Iyengar, G.J. Leuschke, A. Leykin, C. Miller, E. Miller, A.K. Singh and U.Walther, \emph{Twenty-Four Hours of Local Cohomology}, AMS Graduate Studies in Mathematics 87, 2007.
\bibitem{M} H. Matsumura, \emph{Commutative ring theory}, Combridge University press, 1986.
\bibitem{LM} L. Melkersson, Modules cofinite with respect to an ideal, \emph{J. Algebra} \textbf{285} (2005) 649--668.
\bibitem{LM1} L. Melkersson, Cofiniteness with respect to ideals of dimension one, \emph{J. Algebra} \textbf{372} (2012) 459--462.
\bibitem{NS} M. Nazari, R. Sazeedeh, Cofiniteness with respect to two ideals and local cohomology, \emph{Algebra Represent. Theor.} \textbf{22} (2019) 375--385.
\bibitem{R} J.J. Rotman, \emph{An Introduction to Homological Algebra}, 2nd edn. Springer, Berlin, 2009.
\bibitem{WW} S. Sather-Wagstaff, R. Wicklein, Support and adic finiteness for complexes, \emph{Commun. Algebra} \textbf{45} (2017) 2569--2592.
\end{thebibliography}
\end{document}